\newcounter{todocounter}
\newtheorem{theorem}[equation]{Theorem}
\newtheorem{lemma}[equation]{Lemma}
\newtheorem{proposition}[equation]{Proposition}
\newtheorem{corollary}[equation]{Corollary}
\theoremstyle{definition}
\newtheorem{definition}[equation]{Definition}
\newtheorem{example}[equation]{Example}
\newtheorem{remark}[equation]{Remark}
\newtheorem{convention}[equation]{Convention}
\numberwithin{equation}{subsection}
\renewcommand{\O}{\mathcal{O}}
\newcommand{\M}{\mathcal{M}}
\newcommand{\C}{\mathcal{C}}
\newcommand{\D}{\mathcal{D}}
\newcommand{\sI}{\mathscr{I}}
\newcommand{\sJ}{\mathscr{J}}
\renewcommand{\emptyset}{\varnothing}
\renewcommand{\tilde}[1]{\widetilde{#1}}
\DeclareMathOperator{\colim}{colim}
\DeclareMathOperator{\Sym}{Sym}
\DeclareMathOperator{\Ho}{Ho}
\newcommand{\po}{\ar@{}[dr]|(.7){\Searrow}}
\newcommand{\pb}{\ar@{}[dr]|(.3){\Nwarrow}}
\DeclareMathOperator{\map}{map}
\newcommand{\Ch}{\mathsf{Ch}}
\newcommand{\boxprod}{\mathbin\square}
\newcommand{\Sp}{\mathsf{Sp}}
\newcommand{\sigmatop}{\sigmat^{\mathrm{op}}}
\newcommand{\sigmat}{\Sigma_t}
\newcommand{\coprodover}[1]{\underset{#1}{\coprod}}
\newcommand{\dotover}[1]{\underset{#1}{\centerdot}}
\newcommand{\tensorover}[1]{\underset{#1}{\otimes}}
\newcommand{\timesover}[1]{\underset{#1}{\times}}
\newcommand{\algo}{\alg_{\sO}}
\newcommand{\sptothec}{\Sp^{\fC}}
\newcommand{\smallbinom}[2]
{\raisebox{.05cm}{\scalebox{0.8}{$\binom{#1}{#2}$}}}
\tikzset{auto}
\tikzset{empty/.style={circle,inner sep=0pt,minimum size=6mm}}
\tikzset{emptyvt/.style={circle,inner sep=0pt,minimum size=0mm}}
\tikzset{plain/.style={circle,draw,very thick,
inner sep=0pt,minimum size=6mm}}
\tikzset{fatplain/.style={rounded rectangle,draw,very thick,minimum size=6mm}}
\tikzset{bigplain/.style={rounded rectangle,draw,very thick,minimum size=.8cm}}
\tikzset{yellowvt/.style={circle,draw,fill=yellow,very thick,inner sep=0pt,minimum size=6mm}}
\tikzset{bluevt/.style={circle,draw,fill=blue!20,very thick,inner sep=0pt,minimum size=6mm}}
\tikzset{greenvt/.style={circle,draw,fill=green!30,very thick,inner sep=0pt,minimum size=6mm}}
\tikzset{redvt/.style={circle,draw,fill=red!30,very thick,inner sep=0pt,minimum size=6mm}}
\tikzset{arrow/.style={->,thick}}
\tikzset{dashedarrow/.style={->,dashed,thick}}
\tikzset{dottedarrow/.style={->,dotted,thick}}
\tikzset{mapto/.style={|->,thick}}
\tikzset{implies/.style={thick,double,double equal sign distance,-implies}}
\tikzset{line/.style={thick}}
\tikzset{dottedline/.style={dotted,thick}}
\tikzset{dashedline/.style={dashed,thick}}
\tikzset{inputleg/.style={<-,thick}}
\tikzset{outputleg/.style={->,thick}}
\tikzset{dottedinput/.style={<-,dotted,thick}}
\newcommand{\adjoint}{
\nicearrow\xymatrix{ \ar@<2pt>[r] & \ar@<2pt>[l]}}
\renewcommand{\hookrightarrow}{\nicexy{\ar@{^{(}->}[r] &}}
\newcommand{\nicearrow}{\SelectTips{cm}{10}}
\newcommand{\nicexy}{\nicearrow\xymatrix@C+10pt@R+10pt}
\newcommand{\pushout}{\ar@{}[dr]|(0.75){\Searrow}}
\newcommand{\drrpushout}{\ar@{}[drr]|(0.90){\Searrow}}
\newcommand{\comp}{\circ}
\newcommand{\ftilde}{\tilde{f}}
\renewcommand{\to}{\longrightarrow}
\newcommand{\bone}{\mathbf{1}}
\newcommand{\fB}{\mathfrak{B}}
\newcommand{\frakC}{\mathfrak{C}}
\newcommand{\fC}{\mathfrak{C}}
\newcommand{\asmod}{\mathsf{AsMod}}
\newcommand{\sg}{\mathsf{G}}
\newcommand{\sh}{\mathsf{H}}
\renewcommand{\sI}{\mathsf{I}}
\renewcommand{\sJ}{\mathsf{J}}
\newcommand{\sO}{\mathsf{O}}
\newcommand{\osuba}{\mathsf{O}_{A}}
\newcommand{\osubazero}{\mathsf{O}_{A}^0}
\newcommand{\osubaone}{\mathsf{O}_{A}^1}
\newcommand{\osubatwo}{\mathsf{O}_{A}^2}
\newcommand{\osubatminusone}{\mathsf{O}_{A}^{t-1}}
\newcommand{\osubat}{\mathsf{O}_{A}^{t}}
\newcommand{\osubainfinity}{\mathsf{O}_{A_{\infty}}}
\newcommand{\emptyo}{\varnothing_{\sO}}
\newcommand{\scrI}{\mathscr{I}}
\newcommand{\xtilde}{\widetilde{X}}
\newcommand{\ua}{\underline{a}}
\newcommand{\ub}{\underline{b}}
\newcommand{\uc}{\underline{c}}
\newcommand{\smallop}{{\scalebox{.5}{$\mathrm{op}$}}}
\newcommand{\cof}{{\scalebox{.5}{$\mathrm{cof}$}}}
\newcommand{\acof}{{\scalebox{.5}{$\mathrm{t.cof}$}}}
\newcommand{\clubcof}{(\clubsuit)_{\cof}}
\newcommand{\clubacof}{(\clubsuit)_{\acof}}
\newcommand{\cald}{\mathcal{D}}
\newcommand{\caldop}{\mathcal{D}^{\smallop}}
\newcommand{\calm}{\mathcal{M}}
\newcommand{\calmc}{\calm^{\fC}}
\newcommand{\calmg}{\calm^{\sg}}
\newcommand{\calmh}{\calm^{\sh}}
\newcommand{\set}{\mathsf{Set}}
\newcommand{\sset}{\mathsf{sSet}}
\newcommand{\symseq}{\mathsf{SymSeq}}
\newcommand{\symseqc}{\symseq_{\fC}}
\newcommand{\symseqcm}{\symseqc(\calm)}
\newcommand{\alg}{\mathsf{Alg}}
\newcommand{\Alg}{\mathsf{Alg}}
\newcommand{\sigmab}{\Sigma_{[\ub]}}
\newcommand{\sigmac}{\Sigma_{[\uc]}}
\newcommand{\sigmaop}{\Sigma^{\smallop}}
\newcommand{\sigmaopa}{\Sigma^{\smallop}_{\smallbra}}
\newcommand{\sigmaopb}{\Sigma^{\smallop}_{\smallbrb}}
\newcommand{\sigmaopc}{\Sigma^{\smallop}_{\smallbrc}}
\newcommand{\opc}{\mathsf{Op}^{\fC}}
\newcommand{\opcset}{\mathsf{Op}^{\fC}_{\mathsf{Set}}}
\newcommand{\operad}{\mathsf{Operad}}
\newcommand{\operadsigma}{\operad^{\Sigma}}
\newcommand{\operadsigmac}{\operad^{\sigmaofc}}
\newcommand{\operadsigmacm}{\operad^{\sigmaofc}_{\calm}}
\newcommand{\pofc}{\Sigma_{\frakC}}
\newcommand{\pofcop}
{\pofc^{\scalebox{.6}{$\mathrm{op}$}}}
\renewcommand{\pb}{\mathcal{P}(\fB)}
\newcommand{\smallprof}[1]
{\raisebox{.05cm}{\scalebox{0.8}{#1}}}
\newcommand{\cjbrbj}
{\smallprof{$\binom{c_j}{[\ub_j]}$}}
\newcommand{\singlecibi}
{\smallprof{$\binom{c_i}{\ub_i}$}}
\newcommand{\ccsingle}
{\smallprof{$\binom{c}{c}$}}
\newcommand{\singleciempty}
{\smallprof{$\binom{c_i}{\varnothing}$}}
\newcommand{\singledbra}
{\smallprof{$\binom{d}{[\ua]}$}}
\newcommand{\singledbrabrb}
{\smallprof{$\binom{d}{[\ua]; [\ub]}$}}
\newcommand{\singledbrabrbbrc}
{\smallprof{$\binom{d}{[\ua]; [\ub]; [\uc]}$}}
\newcommand{\singledbrtbbrc}
{\smallprof{$\binom{d}{[tb]; [\uc]}$}}
\newcommand{\singledbrtbc}
{\smallprof{$\binom{d}{[tb, \uc]}$}}
\newcommand{\singledbrac}
{\smallprof{$\binom{d}{[\ua,\uc]}$}}
\newcommand{\singledbrabc}
{\smallprof{$\binom{d}{[\ua,\ub,\uc]}$}}
\newcommand{\singledaprimecprime}
{\smallprof{$\binom{d}{\ua',\uc'}$}}
\newcommand{\singledaprimebprimecprime}
{\smallprof{$\binom{d}{\ua',\ub',\uc'}$}}
\newcommand{\singledbrb}
{\smallprof{$\binom{d}{[\ub]}$}}
\newcommand{\dub}
{\smallprof{$\binom{d}{\ub}$}}
\newcommand{\duc}
{\smallprof{$\binom{d}{\uc}$}}
\newcommand{\singledbrc}
{\smallprof{$\binom{d}{[\uc]}$}}
\newcommand{\singledbrtc}
{\smallprof{$\binom{d}{[tc]}$}}
\newcommand{\singledbrabrc}
{\smallprof{$\binom{d}{[\ua]; [\uc]}$}}
\newcommand{\singledempty}
{\smallprof{$\binom{d}{\varnothing}$}}
\newcommand{\smallbr}[1]
{\raisebox{.03cm}{\scalebox{0.5}{#1}}}
\newcommand{\smallbra}{\smallbr{$[\ua]$}}
\newcommand{\smallbrb}{\smallbr{$[\ub]$}}
\newcommand{\smallbrtbc}{\smallbr{$[tb,\uc]$}}
\newcommand{\smallbrc}{\smallbr{$[\uc]$}}
\newcommand{\sigmabra}{\Sigma_{\smallbr{$[\ua]$}}}
\newcommand{\sigmabraone}{\Sigma_{\smallbr{$[\ua_1]$}}}
\newcommand{\sigmabraj}{\Sigma_{\smallbr{$[\ua_j]$}}}
\newcommand{\sigmabram}{\Sigma_{\smallbr{$[\ua_m]$}}}
\newcommand{\sigmabrac}{\Sigma_{\smallbr{$[\ua,\uc]$}}}
\newcommand{\sigmabrabc}{\Sigma_{\smallbr{$[\ua,\ub,\uc]$}}}
\newcommand{\sigmabrb}{\Sigma_{\smallbr{$[\ub]$}}}
\newcommand{\sigmabrbj}{\Sigma_{\smallbr{$[\ub_j]$}}}
\newcommand{\sigmabrtbc}{\Sigma_{\smallbrtbc}}
\newcommand{\sigmabrc}{\Sigma_{\smallbr{$[\uc]$}}}
\newcommand{\sigmabraop}{\sigmabra^{\smallop}}
\newcommand{\sigmabrabcop}{\sigmabrabc^{\smallop}}
\newcommand{\sigmabracop}{\sigmabrac^{\smallop}}
\newcommand{\sigmabrbop}{\sigmabrb^{\smallop}}
\newcommand{\sigmabrbjop}{\sigmabrbj^{\smallop}}
\newcommand{\sigmabrtbcop}{\sigmabrtbc^{\smallop}}
\newcommand{\sigmabrcop}{\sigmabrc^{\smallop}}
\newcommand{\sigmabrcopd}{\sigmabrcop \times \{d\}}
\newcommand{\sigmaofc}{\pofc}
\newcommand{\sigmacop}{\pofcop}
\newcommand{\sigmacopc}{\sigmacop \times \fC}
\newcommand{\dbrch}{([\uc];d)}
\newcommand{\andspace}{\qquad\text{and}\qquad}
\renewcommand{\lim}{\mathsf{lim}\,}
\DeclareMathOperator{\Hom}{Hom}
\DeclareMathOperator{\id}{id}
\DeclareMathOperator{\Id}{Id}
\DeclareMathOperator{\Kan}{\mathsf{Kan}}
\DeclareMathOperator{\Ob}{Ob}
\begin{document}

\title{Bousfield Localization and Algebras over Colored Operads}

\author{David White}
\address{Denison University
\\ Granville, OH}
\email{david.white@denison.edu}

\author{Donald Yau}
\address{The Ohio State University at Newark \\ Newark, OH}
\email{dyau@math.osu.edu}

\begin{abstract}
We provide a very general approach to placing model structures and semi-model structures on algebras over symmetric colored operads. Our results require minimal hypotheses on the underlying model category $\M$, and these hypotheses vary depending on what is known about the colored operads in question. We obtain results for the classes of colored operad which are cofibrant as a symmetric collection, entrywise cofibrant, or arbitrary. As the hypothesis on the operad is weakened, the hypotheses on $\M$ must be strengthened. Via a careful development of the categorical algebra of colored operads we provide a unified framework which allows us to build (semi-)model structures for all three of these classes of colored operads. We then apply these results to provide conditions on $\M$, on the colored operad $O$, and on a class $\C$ of morphisms in $\M$ so that the left Bousfield localization of $\M$ with respect to $\C$ preserves $O$-algebras. Even the strongest version of our hypotheses on $\M$ is satisfied for model structures on simplicial sets, chain complexes over a field of characteristic zero, and symmetric spectra. We obtain results in these settings allowing us to place model structures on algebras over any colored operad, and to conclude that monoidal Bousfield localizations preserve such algebras.
\end{abstract}

\maketitle

\tableofcontents

\section{Introduction}

Modern algebraic topology has conclusively demonstrated the value of applying algebraic techniques to solve problems in homotopy theory. This has led to numerous results in stable homotopy theory (e.g. \cite{ekmm}) and, thanks to the generality of model categories, to homological algebra, algebraic geometry, (higher) category theory, equivariant homotopy theory, and even graph theory. Operads provide the means by which to encode algebraic structure in the necessary level of generality to recover all these examples, and operads have also found application in deformation theory and mathematical physics, in representation theory, in gauge theory and symplectic geometry, in graph cohomology, and in Goodwillie calculus. For a comprehensive overview, see \cite{fresse-book}.

In recent years, the importance of colored operads has become clear, e.g. in \cite{bm07}, \cite{jy2}, and \cite{batanin-berger}. Colored operads encode even more general algebraic structures, including the category of operads itself, other categories which encode algebraic structure (e.g. modular operads, higher operads, colored operads), morphisms between algebras over an operad, modules over an operad, other enriched categories, and diagrams in such categories. Colored operads have been applied in enriched category theory, factorization homology, higher category theory (leading to $\infty$-operads), and topological quantum field theories.

When studying operads and their algebras it is often advantageous to have model structures on these categories of algebras. For instance, in \cite{white-thesis} a theory is developed which obtains conditions under which left Bousfield localization preserves algebra structure when such categories of algebras possess appropriate (semi-)model structures. Such structures provide a powerful computational tool which has been crucial in many of the applications above. Our goal is to build (semi-)model structures on algebras over colored operads in the maximal possible generality, i.e., with as few hypotheses on the underlying model category as possible. For this reason we divide our focus between colored operads which are cofibrant, entrywise cofibrant, and arbitrary. We provide hypotheses under which these categories of algebras are model categories, and we provide weaker hypotheses so that they are semi-model categories, extending results of \cite{white-thesis} to the colored setting. We then apply these semi-model structures to prove results regarding preservation of algebraic structure by Bousfield localization.

After reviewing the necessary definitions and notation in Section \ref{sec:prelims}, we provide a careful development of the categorical algebra underlying the study of colored operads. This includes realizing the category of colored operads as a category of monoids for a particular monoidal product (which generalizes the circle product for operads) in Section \ref{sec:colored}, building the category of algebras over a colored operad in this setting, and producing a filtration \eqref{aoycolim} in Section \ref{sec:alg-over-colored} which can be used to transfer model structures to categories of algebras. This filtration generalizes the one found in \cite{harper-jpaa} and introduces a colored analogue for the symmetric sequence $\sO_A$ used therein. Filtrations of this sort have been studied by many authors in the setting of operads, but a careful treatment for the case of colored operads has not previously appeared.

In Section \ref{sec:O_A} we prove various homotopical properties for the colored symmetric sequence $\sO_A$, and in Section \ref{sec:model-on-algebras} we use our filtration to place model structures (and semi-model structures when hypotheses are relaxed) on categories of algebras over various classes of colored operads. In Section \ref{sec:preservation} we build on the work in \cite{white-localization} and provide general conditions so that left Bousfield localization preserves algebras over colored operads. Finally, in Section \ref{sec:applications} we provide numerous applications of these results to placing model structures on categories of algebras over any colored operad in simplicial sets, chain complexes over a field of characteristic zero, and to several model structures on symmetric spectra. In addition we prove results regarding preservation of colored operad algebras in these settings and we highlight future applications to ongoing research in equivariant stable homotopy theory, motivic homotopy theory, and higher categorical algebra including a new proof of the Breen-Baez-Dolan Stabilization Hypothesis.

\subsection*{Acknowledgments}

The authors are indebted to John E. Harper for numerous helpful conversations and to Luis Pereira for pointing out a mistake in an earlier version of this paper.

\section{Preliminaries}
\label{sec:prelims}

In this paper, $(\calm, \otimes, I, \Hom)$ will be a symmetric monoidal closed category with $\otimes$-unit $I$ and internal hom $\Hom$.  We assume $\calm$ has all small limits and colimits.  Its initial and terminal objects are denoted by $\varnothing$ and $*$, respectively.

At times we will also assume $\calm$ possesses a model structure that is compatible with the monoidal structure in a way we shall describe shortly. We will make it clear when we are assuming $\calm$ is a model category; much of the categorical algebra in this paper will not require a model structure on $\calm$.

\subsection{Monoidal Model Categories}

We assume the reader is familiar with basic facts about model categories as presented in \cite{hirschhorn} and \cite{hovey}. When we work with model categories they will most often be cofibrantly generated, i.e., there is a set $I$ of cofibrations and a set $J$ of trivial cofibrations (i.e. maps which are both cofibrations and weak equivalences) which permit the small object argument (with respect to some cardinal $\kappa$), and a map is a (trivial) fibration if and only if it satisfies the right lifting property with respect to all maps in $J$ (resp. $I$). This set $I$ is not to be confused with the monoidal unit, and the meaning of $I$ will be easy to infer from the context.

Let $I$-cell denote the class of transfinite compositions of pushouts of maps in $I$, and let $I$-cof denote retracts of such. In order to run the small object argument, we will assume the domains $K$ of the maps in $I$ (and $J$) are $\kappa$-small relative to $I$-cell (resp. $J$-cell), i.e., given a regular cardinal $\lambda \geq \kappa$ and any $\lambda$-sequence $X_0\to X_1\to \cdots$ formed of maps $X_\beta \to X_{\beta+1}$ in $I$-cell, then the map of sets
\[
\nicexy{\colim_{\beta < \lambda} \M\bigl(K,X_\beta\bigr) \ar[r] 
& \M\bigl(K,\colim_{\beta < \lambda} X_\beta\bigr)}
\]
is a bijection. An object is \emph{small} if there is some $\kappa$ for which it is $\kappa$-small. See Chapter 10 of \cite{hirschhorn} for a more thorough treatment of this material.

We must now discuss the interplay between the monoidal structure and the model structure which we will require in this paper. This definition is taken from 3.1 in \cite{ss}.

\begin{definition}[Monoidal Model Categories]
A symmetric monoidal closed category $\calm$ equipped with a model structure is called a \textbf{monoidal model category} if it satisfies the following axiom (known as the \textbf{pushout product axiom}): 

\begin{itemize}
\item Given any cofibrations $f:X_0\to X_1$ and $g:Y_0\to Y_1$, the pushout corner map
\[
\nicexy{
X_0\otimes Y_1 \coprod\limits_{X_0\otimes Y_0}X_1\otimes Y_0 
\ar[r]^-{f\boxprod g} & X_1\otimes Y_1}
\]
is a cofibration. If, in addition, either $f$ or $g$ is a weak equivalence then $f\boxprod g$ is a trivial cofibration.
\end{itemize}
\end{definition}

Note that the pushout product axiom is equivalent to the statement that $-\otimes-$ is a Quillen bifunctor.

\begin{remark}
\label{remark-check-pp-ax-on-gen}
If $\calm$ is cofibrantly generated, then Proposition 4.2.5 of \cite{hovey} shows that it is sufficient to check the pushout product axiom for $f$ and $g$ in the sets of generating (trivial) cofibrations.
\end{remark}

The monoidal adjunction of $\calm$ allows for an equivalent form of the pushout product axiom which we shall need (see Lemma 4.2.2 of \cite{hovey}).

\begin{remark}
\label{remark-fibration-version-pp-axiom}
The pushout product axiom holds if and only if the following statement holds:

\begin{itemize}
\item Given a cofibration $i:A\to B$ and a fibration $p:X\to Y$, the pullback corner map 
\[
\nicexy{
\Hom(B,X) \ar[r]^-{(i^*,p_*)} & \Hom(A,X)\timesover{\Hom(A,Y)} \Hom(B,Y)
}\] 
is a fibration, where $\Hom$ is the internal hom. Additionally, if either $i$ or $p$ is a weak equivalence then so is $(i^*,p_*)$.
\end{itemize}
\end{remark}

We will at times also need to assume an additional layer of compatibility between the monoidal structure and the model structure

\begin{definition}
\label{defn:cof-obj-flat}
Let $\calm$ be a monoidal model category. We say that \textit{cofibrant objects are flat} in $\calm$ if whenever an object $X$ is cofibrant and $f$ is a weak equivalence then $f\otimes X$ is a weak equivalence. 
\end{definition}

\subsection{Semi-Model Categories}

When attempting to study the homotopy theory of algebras over a colored operad, the usual method is to transfer a model structure from $\calm$ to this category of algebras along the free-forgetful adjunction (using Kan's Lifting Theorem \cite{hirschhorn} (11.3.2)). Unfortunately, it is often the case that one of the conditions for Kan's theorem cannot be checked fully, so that the resulting homotopical structure on the category of algebras is something less than a model category. This type of structure was first studied in \cite{hovey-monoidal} and \cite{spitzweck-thesis}, and later in published sources such as \cite{fresse} and \cite{fresse-book}.

\begin{definition}
\label{defn:semi-model-cat}
Assume there is an adjunction $F:\calm \rightleftarrows \D:U$ where $\calm$ is a cofibrantly generated model category, $\D$ is bicomplete, and $U$ preserves small colimits. 

We say that $\D$ is a \textbf{semi-model category} if $\D$ has three classes of morphisms called \emph{weak equivalences}, \emph{fibrations}, and \emph{cofibrations} such that the following axioms are satisfied.   A \emph{cofibrant} object $X$ means an object in $\D$ such that the map from the initial object of $\D$ to $X$ is a cofibration in $\D$.  Likewise, a \emph{fibrant} object is an object for which the map to the terminal object in $\D$ is a fibration in $\D$.

\begin{enumerate}
\item $U$ preserves fibrations and trivial fibrations ($=$ maps that are both weak equivalences and fibrations).
\item $\D$ satisfies the 2-out-of-3 axiom and the retract axiom of a model category.
\item Cofibrations in $\D$ have the left lifting property with respect to trivial fibrations. Trivial cofibrations ($=$ maps that are both weak equivalences and cofibrations) in $\D$ whose domain is cofibrant have the left lifting property with respect to fibrations.
\item Every map in $\D$ can be functorially factored into a cofibration followed by a trivial fibration. Every map in $\D$ whose domain is cofibrant can be functorially factored into a trivial cofibration followed by a fibration.
\item The initial object in $\D$ is cofibrant.
\item Fibrations and trivial fibrations are closed under pullback.
\end{enumerate}

$\D$ is said to be \textit{cofibrantly generated} if there are sets of morphisms $I'$ and $J'$ in $\D$ such that the following conditions are satisfied.
\begin{enumerate}
\item
Denote by $I'$-inj the class of maps that have the right lifting property with respect to maps in $I'$.  Then $I'$-inj is the class of trivial fibrations.
\item
$J'$-inj is the class of fibrations in $\D$.
\item
The domains of $I'$ are small relative to $I'$-cell.
\item
The domains of $J'$ are small relative to maps in $J'$-cell whose domain is sent by $U$ to a cofibrant object in $\calm$.
\end{enumerate}
\end{definition}

In practice the weak equivalences (resp. fibrations) are morphisms $f$ such that $U(f)$ is a weak equivalence (resp. fibration) in $\calm$, and the generating (trivial) cofibrations of $\D$ are maps of the form $F(I)$ and $F(J)$ where $I$ and $J$ are the generating (trivial) cofibrations of $\calm$.

Note that the only difference between a semi-model structure and a model structure is that one of the lifting properties and one of the factorization properties requires the domain of the map in question to be cofibrant. Because fibrant and cofibrant replacements are constructed via factorization, (4) of a semi-model category implies that every object has a cofibrant replacement and that cofibrant objects have fibrant replacements. So one could construct a fibrant replacement functor which first does cofibrant replacement and then does fibrant replacement. These functors behave as they would in the presence of a full model structure. 

The primary theorem we shall use to prove that our categories of interest possess semi-model structures is Theorem 3.3 in \cite{fresse}. Observe that Fresse requires slightly more of his semi-model categories than we do of ours (his axiom (1) is stronger than ours). The following theorem guarantees existence of a semi-model structure in the sense of Fresse, and hence in our sense as well.

\begin{theorem}[Semi-Model Category Existence Theorem]
\label{thm:fresse-semi-existence}
Assume that:\\
(*) for any pushout
\begin{align*}
\nicexy{F(X)\ar[r] \ar[d]_{F(i)}  & A \ar[d]^f \\ F(Y)\ar[r] & B} 
\end{align*}
where $A$ is a $F(\calm_{cof})$-cell complex (i.e. $\emptyset \to A$ is a transfinite composition of pushouts of maps of the form $F(h)$ where $h$ is a cofibration in $\calm$) then $U(f)$ is a (trivial) cofibration in $\calm$ whenever $i$ is a (trivial) cofibration in $\calm$.

Then $\D$ forms a cofibrantly generated semi-model category and $U:\D \to \calm$ maps cofibrations with cofibrant domains to cofibrations.
\end{theorem}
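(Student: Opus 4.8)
The plan is to obtain the semi-model structure on $\D$ by transfer along the adjunction $F \colon \calm \rightleftarrows \D \colon U$ of Definition \ref{defn:semi-model-cat}, taking $I' = F(I)$ and $J' = F(J)$ as the candidate generating cofibrations and generating trivial cofibrations, declaring a map $f$ in $\D$ to be a weak equivalence (resp. fibration) exactly when $U(f)$ is one in $\calm$, and defining the cofibrations of $\D$ by the left lifting property against the trivial fibrations. This is the setup of Fresse's transfer theorem (Theorem 3.3 of \cite{fresse}), and the content of the argument is to check that hypothesis (*) provides precisely the cellular input that theorem requires; since Fresse's notion of semi-model category is stronger than ours only in his form of axiom (1), the first conclusion will then follow. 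I will sketch the verification directly, so that the role of (*) is transparent and because the second conclusion about $U$ uses (*) explicitly.

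First I would record the two adjunction identities. A map $p$ in $\calm$ is a fibration iff it lifts against $J$ and a trivial fibration iff it lifts against $I$; transposing across $\D(F(-),-) \cong \calm(-,U(-))$ shows that $J'$-inj is exactly the class of fibrations of $\D$ and $I'$-inj is exactly the class of trivial fibrations of $\D$. This yields the two cofibrant-generation clauses identifying $I'$-inj and $J'$-inj, together with axiom (1) (since $U$ detects fibrations and trivial fibrations by fiat) and axiom (6) (classes defined by a right lifting property are closed under pullback). For the smallness clauses I would use that $F$ is a left adjoint while $U$ preserves all small colimits: the same transposition turns $\D(F(K),-)$ into $\calm(K,U(-))$ and carries relative $I'$- and $J'$-cell complexes to $\lambda$-sequences in $\calm$, so smallness of the domains of $I$ and $J$ in $\calm$ yields the required smallness of the domains of $I'$ and $J'$. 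With smallness in hand the small object argument factors every map as a relative $I'$-cell complex followed by a map in $I'$-inj (a cofibration followed by a trivial fibration, the unrestricted half of axiom (4)), and as a relative $J'$-cell complex followed by a fibration.

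The crux, and the one genuinely delicate point, is to promote this second factorization: I must show that a relative $J'$-cell complex is a trivial cofibration whenever its domain is cofibrant. Such a complex is a transfinite composite of pushouts of maps $F(j)$, with $j$ a generating trivial cofibration, taken along maps out of $F(\calm_{cof})$-cell complexes; hypothesis (*) in its trivial-cofibration form says exactly that the $U$-image of each such pushout is a trivial cofibration in $\calm$, and trivial cofibrations in $\calm$ are closed under transfinite composition, so the $U$-image of the whole complex is a trivial cofibration, hence a weak equivalence, and the complex is a trivial cofibration in $\D$. This is the heart of the matter: (*) only licenses the conclusion over an $F(\calm_{cof})$-cell complex, which is precisely what forces the cofibrant-domain hypotheses throughout and produces a semi-model rather than a full model structure. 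The domain-cofibrant half of axiom (4) is now the $J'$-factorization, and axiom (3) follows by the standard retract argument: a trivial cofibration $g$ with cofibrant domain factors as a relative $J'$-cell complex $j$ followed by a fibration $p$; since $j$ and $g$ are weak equivalences so is $p$, making $p$ a trivial fibration, so $g$ lifts against $p$ and is a retract of $j$, whence $g$ inherits from $j \in J'\text{-cell}$ the left lifting property against all fibrations. Axioms (2) and (5) are routine: two-out-of-three and the retract axiom are inherited from $\calm$ through $U$ (which detects weak equivalences and preserves retracts), and the initial object $F(\emptyset)$ is cofibrant since its identity map is a cofibration.

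Finally, for the assertion that $U$ sends cofibrations with cofibrant domains to cofibrations, I would run the analogous argument in the plain-cofibration form of (*). A cofibration $g$ with cofibrant domain factors and then retracts onto a relative $I'$-cell complex $j$ with the same domain; a short retract argument reduces to the case in which the domain is itself an $F(\calm_{cof})$-cell complex (push $j$ forward along a retraction exhibiting the domain as a retract of a cell complex), and in that case (*) applied cellwise, together with closure of cofibrations in $\calm$ under transfinite composition and retract, shows $U(j)$ and hence $U(g)$ to be a cofibration. Having verified every clause of Definition \ref{defn:semi-model-cat} — equivalently, having matched the hypotheses of Theorem 3.3 of \cite{fresse} and observed that his axioms strengthen ours only in axiom (1) — the theorem follows.
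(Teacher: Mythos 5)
The paper offers no proof of this statement: Theorem \ref{thm:fresse-semi-existence} is imported wholesale from Theorem 3.3 of \cite{fresse}, and the only in-paper content is the observation that Fresse's axioms for a semi-model category are at least as strong as those of Definition \ref{defn:semi-model-cat}, so his existence theorem yields one in the present sense. There is therefore nothing to compare against line by line; what you have written is the standard transfer argument underlying Fresse's theorem, and it is essentially sound --- the adjunction identification of $I'$-inj and $J'$-inj with the trivial fibrations and fibrations, the smallness transfer via colimit-preservation of $U$, the use of (*) to make relative $J'$-cell complexes into underlying trivial cofibrations, and the retract argument for the restricted lifting axiom are all the right moves.

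The one place to tighten is the crux paragraph. Hypothesis (*) is stated only for $A$ an honest $F(\calm_{cof})$-cell complex, while a cofibrant object of $\D$ is in general only a retract of an $I'$-cell complex, and a relative $J'$-cell complex built on a retract is not literally a transfinite composite of pushouts whose intermediate stages are cell complexes. So the inductive application of (*) in your verification of axioms (3) and (4) already requires the retract-reduction device you only invoke later for the assertion that $U$ preserves cofibrations with cofibrant domain (factor over the cell complex of which the domain is a retract, then retract back). With that adjustment made explicit where it is first needed, the argument is complete and agrees with the proofs in \cite{fresse} and \cite{spitzweck-thesis}.
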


\section{Colored Operads}
\label{sec:colored}

In this section, we define colored operads as monoids with respect to a colored version of the circle product for operads.

\subsection{Colors and Profiles}

Here we recall from \cite{jy2} some notations regarding colors that are needed to talk about colored objects.

\begin{definition}[Colored Objects]
\label{def:profiles}
Fix a non-empty set $\fC$, whose elements are called \textbf{colors}.
\begin{enumerate}
\item
A \textbf{$\fC$-profile} is a finite sequence of elements in $\fC$, say,
\[
\uc = (c_1, \ldots, c_m) = c_{[1,m]}
\]
with each $c_i \in \fC$.  If $\fC$ is clear from the context, then we simply say \textbf{profile}. The empty $\fC$-profile is denoted $\emptyset$, which is not to be confused with the initial object in $\calm$.  Write $|\uc|=m$ for the \textbf{length} of a profile $\uc$.
\item
An object in the product category $\prod_{\fC} \calm = \calm^{\fC}$ is called a \textbf{$\fC$-colored object in $\calm$}, and similarly for a map of $\fC$-colored objects.  A typical $\fC$-colored object $X$ is also written as $\{X_a\}$ with $X_a \in \calm$ for each color $a \in \fC$.
\item
Suppose $X \in \calmc$ and $c \in \fC$.  Then $X$ is said to be \textbf{concentrated in the color $c$} if $X_d = \varnothing$ for all $c \not= d \in \fC$.
\item
Suppose $f : X \to Y \in \calm$ and $c \in \fC$.  Then $f$ is said to be \textbf{concentrated in the color $c$} if both $X$ and $Y$ are concentrated in the color $c$.
\end{enumerate}
\end{definition}

Next we define the colored version of a $\Sigma$-object, also known as a symmetric sequence.

\begin{definition}[Colored Symmetric Sequences]
\label{def:colored-sigma-object}
Fix a non-empty set $\fC$.
\begin{enumerate}
\item
If $\ua = (a_1,\ldots,a_m)$ and $\ub$ are $\fC$-profiles, then a \textbf{map} (or \textbf{left permutation}) $\sigma : \ua \to \ub$ is a permutation $\sigma \in \Sigma_{|\ua|}$ such that
\[
\sigma\ua = (a_{\sigma^{-1}(1)}, \ldots , a_{\sigma^{-1}(m)}) = \ub
\]
This necessarily implies $|\ua| = |\ub| = m$.
\item
The \textbf{groupoid of $\fC$-profiles}, with left permutations as the isomorphisms, is denoted by $\pofc$.  The opposite groupoid $\pofcop$ is regarded as the groupoid of $\fC$-profiles with \textbf{right permutations}
\[
\ua\sigma = (a_{\sigma(1)}, \ldots , a_{\sigma(m)})
\]
as isomorphisms.
\item
The \textbf{orbit} of a profile $\ua$ is denoted by $[\ua]$.  The maximal connected sub-groupoid of $\pofc$ containing $\ua$ is written as $\sigmabra$.  Its objects are the left permutations of $\ua$.  There is a decomposition
\begin{equation}
\label{pofcdecomp}
\pofc \cong \coprod_{[\ua] \in \pofc} \sigmabra,
\end{equation}
where there is one coproduct summand for each orbit $[\ua]$ of a $\fC$-profile.  By $[\ua] \in \pofc$ we mean that $[\ua]$ is an orbit in $\pofc$.
\item
Define the diagram category
\begin{equation}
\label{colored-symmtric-sequence}
\symseqcm = \calm^{\sigmacopc},
\end{equation}
whose objects are called \textbf{$\fC$-colored symmetric sequences}.  By the decomposition \eqref{pofcdecomp}, there is a decomposition
\begin{equation}
\label{symseqdecomp}
\symseqcm 
\cong 
\prod_{\dbrch \in \sigmacopc} \calm^{\sigmabrcopd},
\end{equation}
where $\sigmabrcopd \cong \sigmaopc$.  
\item
For $X \in \symseqcm$, we write
\begin{equation}
\label{sigmacopd-component}
X\singledbrc \in \calm^{\sigmabrcopd} \cong \calm^{\sigmabrcop}
\end{equation}
for its $\dbrch$-component.  For $(\uc;d) \in \sigmacopc$ (i.e., $\uc$ is a $\fC$-profile and $d \in \fC$), we write
\begin{equation}
\label{dc-component}
X\duc \in \calm
\end{equation}
for the value of $X$ at $(\uc;d)$.
\end{enumerate}
\end{definition}

\begin{remark}
\label{soneobject}
In the one-colored case (i.e., $\fC = \{*\}$), for each integer $n \geq 0$, there is a unique $\fC$-profile of length $n$, usually denoted by $[n]$.  We have $\Sigma_{[n]} = \Sigma_n$, the symmetric group $\Sigma_n$ regarded as a one-object groupoid.  So we have
\[
\pofc = \coprod_{n \geq 0} \Sigma_n = \Sigma
\andspace
\symseqcm = \calm^{\sigmacopc} = \calm^{\sigmaop}.
\]
In other words, one-colored symmetric sequences are symmetric sequences (also known as $\Sigma$-objects and collections) in the usual sense.
\end{remark}

From now on, assume that $\fC$ is a fixed non-empty set of colors, unless otherwise specified.

\begin{remark}
\label{rk:level-zero}
There is a fully faithful imbedding
\begin{equation}
\label{colored-object-imbedding}
\calm^{\fC} \to \symseqcm
\end{equation}
that sends a $\fC$-colored object $X = \{X_c\}_{c \in \fC}$ to the $\fC$-colored symmetric sequence with entries
\[
X\duc = \begin{cases}
X_d & \text{if $\uc = \varnothing$},\\
\varnothing & \text{if $\uc \not= \varnothing$},
\end{cases}
\]
where in the previous line the first (resp., second) $\varnothing$ denotes the initial object in $\calm$ (resp., the empty profile).
\end{remark}

\subsection{Colored Circle Product}

We will define $\fC$-colored operads as monoids with respect to the $\fC$-colored circle product.  To define the latter, we need the following definition.

\begin{definition}[Tensored over a Category]
\label{def:tensorover}
Suppose $\cald$ is a small groupoid, $X \in \calm^{\caldop}$, and $Y \in \calm^{\cald}$.  Define the object $X \otimes_{\cald} Y \in \calm$ as the colimit of the composite
\[
\nicexy{
\cald \ar[r]^-{\cong \Delta} 
& \caldop \times \cald \ar[r]^-{(X,Y)}
& \calm \times \calm \ar[r]^-{\otimes}
& \calm,
}\]
where the first map is the diagonal map followed by the isomorphism $\cald \otimes \cald \cong \caldop \times \cald$.
\end{definition}

We will mainly use the construction $\otimes_{\cald}$ when $\cald$ is the finite connected groupoid $\sigmabrc$ for some orbit $[\uc] \in \pofc$.

\begin{convention}
For an object $A \in \calm$, $A^{\otimes 0}$ is taken to mean $I$, the $\otimes$-unit in $\calm$.
\end{convention}

\begin{definition}[Colored Circle Product]
\label{def:colored-circle-product}
Suppose $X,Y  \in \symseqcm$, $d \in \fC$, $\uc = (c_1,\ldots,c_m) \in \pofc$, and $[\ub] \in \pofc$ is an orbit.
\begin{enumerate}
\item
Define the object
\[
Y^{\uc} \in \calm^{\pofcop} \cong \prod_{[\ub] \in \pofc} \calm^{\sigmabrbop}
\]
as having the $[\ub]$-component
\begin{equation}
\label{ytensorc}
Y^{\uc}([\ub]) 
=
\coprod_{\substack{\{[\ub_j] \in \pofc\}_{1 \leq j \leq m} \,\mathrm{s.t.} \\
[\ub] = [(\ub_1,\ldots,\ub_m)]}} 
\Kan^{\sigmabrbop} 
\left[\bigotimes_{j=1}^m Y \cjbrbj\right] 
\in \calm^{\sigmabrbop}.
\end{equation}
The left Kan extension in \eqref{ytensorc} is defined as
\[
\nicexy{
\prod_{j=1}^m \sigmabrbjop 
\ar[d]_-{\mathrm{concatenation}} 
\ar[rr]^-{\prod Y \binom{c_j}{-}} 
&&
\calm^{\times m} \ar[d]^-{\otimes}
\\
\sigmabrbop \ar[rr]_-{\Kan^{\sigmabrbop}\left[\otimes Y(\vdots)\right]}^-{\mathrm{left ~Kan~ extension}} 
&& \calm.
}\]
\item
By allowing left permutations of $\uc$ in \eqref{ytensorc}, we obtain
\[
Y^{[\uc]} \in \calm^{\pofcop \times \sigmabrc} \cong \prod_{[\ub] \in \pofc} \calm^{\sigmabrbop \times \sigmabrc}
\]
with components
\begin{equation}
\label{tensorbracket}
Y^{[\uc]}([\ub]) \in \calm^{\sigmabrbop \times \sigmabrc}.
\end{equation}
\item
Recall the product decomposition \eqref{symseqdecomp} of $\symseqcm$. 
The \textbf{$\fC$-colored circle product}
\[
X \circ Y \in \symseqcm
\]
is defined to have components
\begin{equation}
\label{xcircley}
(X \circ Y)\singledbrb 
= \coprod_{[\uc] \in \pofc} 
X\singledbrc \otimes_{\sigmabrc} Y^{[\uc]}([\ub]) \in \calm^{\sigmaopb \times \{d\}},
\end{equation}
where the coproduct is indexed by all the orbits in $\pofc$, as $d$ runs through $\fC$ and $[\ub]$ runs through all the orbits in $\pofc$.  The construction $\otimes_{\sigmabrc}$ was defined in Definition \ref{def:tensorover}.
\end{enumerate}
\end{definition}

\begin{remark}
In the one-colored case (i.e., $\fC = \{*\}$), the $\fC$-colored circle product is equivalent to the circle product of $\Sigma$-objects in \cite{rezk} (2.2.3).
\end{remark}

\begin{remark}
The appearance of the Kan extension in \eqref{ytensorc} may be explained as follows.  The object $Y^{\uc}([\ub])$ is supposed to have $\sigmaopb$-equivariance.  However, the tensor $\bigotimes_{j=1}^m Y\cjbrbj$ only has $(\prod \sigmabrbjop)$-equivariance, since $Y\cjbrbj$ is a $\sigmabrbjop$-equivariant object.  So we take the Kan extension to bump it up to a $\sigmaopb$-equivariant object.  Furthermore, in the one-colored case, this Kan extension is the usual copower operation $- \cdot_{\Sigma_{k_1} \times \cdots \times \Sigma_{k_m}} \Sigma_N$, where $N = k_1 + \cdots + k_m$.  The image of an object $X$ under this copower operation is, ignoring the $\Sigma_N$-equivariance, a coproduct of copies of $X$, one for each element in the quotient $\Sigma_N/(\Sigma_{k_1} \times \cdots \times \Sigma_{k_m})$.  The general colored case behaves similarly, as we will explain shortly.
\end{remark}

To explain $\Kan^{\sigmabrbop}$ explicitly, we need the following definition.

\begin{definition}
Suppose $\ua_j \in \sigmaofc$ for $1 \leq j \leq m$ and $\ua \in [(\ua_1,\ldots,\ua_m)]$.  An \textbf{order-preserving map}
\[
\sigma \in \sigmabra\bigl((\ua_1,\ldots,\ua_m); \ua\bigr)
\]
is a map such that, for each $1 \leq j \leq m$ and each color $d \in \fC$ that appears in $\ua_j$, the order of the images of these $d$'s under $\sigma$ is the same as in $\ua_j$.   Denote by
\[
\sigmabra'\bigl((\ua_1,\ldots,\ua_m); \ua\bigr)
\] 
the set of such order-preserving maps.
\end{definition}

\begin{example}
The set $\sigmabra' \bigl((\ua_1,\ldots,\ua_m); \ua\bigr)$ contains at least one element.  Moreover, if either
\begin{itemize}
\item
$m=1$, or
\item
the $\ua_j$'s do not have common colors (i.e., if $d \in \fC$ appears in some $\ua_j$, then $d$ does not appear in any $\ua_i$ for $i \not= j$),
\end{itemize}
then $\sigmabra' \bigl((\ua_1,\ldots,\ua_m); \ua\bigr)$ contains exactly one element.
\end{example}

The copower operation $- \cdot_{\Sigma_{k_1} \times \cdots \times \Sigma_{k_m}} \Sigma_N$ has the following colored analogue.

\begin{definition}
\label{def:explicit-kan}
Suppose $[\ua_j] \in \sigmaofc$ for $1 \leq j \leq m$, $[\ua] = [(\ua_1,\ldots,\ua_m)]$, and $X \in \calm^{\sigmabraone \times \cdots \times \sigmabram}$.  Define
\[
\xtilde \in \calm^{\sigmabra}
\]
as having the value
\begin{equation}
\label{kan-extension-formula}
\xtilde(\ua) 
= \coprod_{\{\ua_j \in \sigmabraj\}_{1 \leq j \leq m}}
\coprod_{\sigmabra'\left((\ua_1,\ldots,\ua_m); \ua\right)} 
X \left(\ua_1; \ldots; \ua_m\right) \in \calm
\end{equation}
for each object $\ua \in \sigmabra$.  To define the structure maps in $\xtilde$, suppose $\tau \in \sigmabra\left(\ua;\ub\right)$ for some profiles $\ua$ and $\ub$ in the orbit $[\ua]$, and suppose $\sigma \in \sigmabra'\bigl((\ua_1,\ldots,\ua_m); \ua\bigr)$.  Then $\tau\sigma \in \sigmabra\bigl((\ua_1,\ldots,\ua_m); \ub\bigr)$, but it may not be order-preserving.  However, there are unique permutations 
\begin{itemize}
\item
$\pi_j \in \sigmaofc(\ua_j;\ua_j)$  for $1 \leq j \leq m$ and 
\item
$\pi \in \sigmabra' \bigl((\ua_1,\ldots, \ua_m); \ub\bigr)$
\end{itemize}
such that the square
\begin{equation}
\label{sigma-tau-pi}
\nicexy{
(\ua_1,\ldots,\ua_m)
\ar[d]_-{\sigma}
\ar[r]^-{\{\pi_j\}}
& (\ua_1,\ldots, \ua_m)
\ar[d]^-{\pi}
\\
\ua \ar[r]^-{\tau} & \ub
}
\end{equation}
is commutative in $\sigmabra$.  More explicitly, for each $1\leq j \leq m$ and each color $d \in \fC$ that appears in $\ua_j$, say $k$ times, the images of these $k$ copies of $d$'s in $\ua$ have the same order as they do in $\ua_j$.  When restricted to these $k$ copies of $d$'s, $\tau$ permutes them in a certain way.  The permutation $\pi_j$ permutes the $k$ copies of $d$'s in $\ua_j$ exactly as $\tau$ does.  The map $\pi$ is defined as
\[
\pi = \tau \sigma \{\pi_j^{-1}\},
\]
which is order-preserving by construction.  Then we define the structure map
\[
\xtilde(\tau) : \xtilde(\ua) \to \xtilde(\ub)
\]
by sending the copy of $X \left(\ua_1; \ldots; \ua_m\right)$ in $\xtilde(\ua)$ corresponding to $\sigma$ to the copy of $X \left(\ua_1;\ldots; \ua_m\right)$ in $\xtilde(\ub)$ corresponding to $\pi$ via the structure map
\begin{equation}
\label{xpij}
\nicexy{
X(\ua_1;\ldots;\ua_m)
\ar[r]^-{X \{\pi_j\}}
& X(\ua_1;\ldots; \ua_m)
}
\end{equation}
in $X$.
\end{definition}

The next observation explains what $\Kan^{\sigmabraop}$ is.  To simplify the notations, we work with $\sigmabra$ instead.

\begin{proposition}
\label{kan-sigma}
Suppose $[\ua_j] \in \sigmaofc$ for $1 \leq j \leq m$, $[\ua] = [(\ua_1,\ldots,\ua_m)]$, and $X \in \calm^{\sigmabraone \times \cdots \times \sigmabram}$.  Define
\[
W = \Kan^{\sigmabra}(X) \in \calm^{\sigmabra}
\]
as the left Kan extension in:
\[
\nicexy{
\prod_{j=1}^m \sigmabraj
\ar[d]_-{\mathrm{concatenation}} 
\ar[rr]^-{X} 
&&
\calm \ar[d]^-{=}
\\
\sigmabra \ar[rr]_-{W}^-{\mathrm{left ~Kan~ extension}} 
&& \calm.
}\]
Then $W = \xtilde$.
\end{proposition}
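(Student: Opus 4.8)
The plan is to evaluate the left Kan extension $W=\Kan^{\sigmabra}(X)$ through the pointwise colimit formula and to recognize the resulting colimit as the explicit coproduct \eqref{kan-extension-formula}. Write $\gamma\colon \prod_{j=1}^m \sigmabraj \to \sigmabra$ for the concatenation functor. For a fixed object $\ua\in\sigmabra$ the value is the colimit
\[
W(\ua)=\colim_{\bigl((\ua_1,\ldots,\ua_m),\sigma\bigr)\in(\gamma\downarrow\ua)} X(\ua_1;\ldots;\ua_m),
\]
where the comma category $(\gamma\downarrow\ua)$ has objects the pairs $\bigl((\ua_1,\ldots,\ua_m),\sigma\bigr)$ with $\ua_j$ an object of $\sigmabraj$ and $\sigma\in\sigmabra\bigl((\ua_1,\ldots,\ua_m);\ua\bigr)$, and morphisms $\bigl((\ua_j),\sigma\bigr)\to\bigl((\ua_j'),\sigma'\bigr)$ the tuples $\{\pi_j\}$ with $\pi_j\ua_j=\ua_j'$ and $\sigma'\circ\gamma(\{\pi_j\})=\sigma$. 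Since $\sigmabra$ and each $\sigmabraj$ are groupoids, $(\gamma\downarrow\ua)$ is a groupoid, so the colimit splits as a coproduct of copies of $X$ indexed by its connected components.

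The enabling observation is that every object of $(\gamma\downarrow\ua)$ has trivial automorphism group. On morphisms $\gamma$ is the block sum of permutations, so the induced homomorphism $\prod_j\Sigma_{|\ua_j|}\to\Sigma_{|\ua|}$ is injective; an automorphism $\{\pi_j\}$ of $\bigl((\ua_j),\sigma\bigr)$ satisfies $\sigma\circ\gamma(\{\pi_j\})=\sigma$, hence $\gamma(\{\pi_j\})=\id$, which forces each $\pi_j=\id$. Therefore each component is equivalent to a point and contributes a single undistorted copy of $X(\ua_1;\ldots;\ua_m)$, with no automorphism quotient to worry about.

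Next I would match the components with the indexing of \eqref{kan-extension-formula}. Fixing a representative $\ua_j$ in each orbit $\sigmabraj$, I claim each connected component contains exactly one object $\bigl((\ua_1,\ldots,\ua_m),\sigma\bigr)$ with $\sigma$ order-preserving. Existence and uniqueness is exactly the factorization built in \eqref{sigma-tau-pi}: any $\sigma$ may be right-multiplied by a block tuple $\{\pi_j\}$ of color-permutations of the $\ua_j$ to land on an order-preserving map, and the order-preserving member of each such $\gamma(\prod_j\operatorname{Aut}(\ua_j))$-coset is unique. Summing over $\sigma\in\sigmabra'\bigl((\ua_1,\ldots,\ua_m);\ua\bigr)$ thus selects one object per component, producing precisely the coproduct \eqref{kan-extension-formula}.

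Finally I would verify that this identification is natural, so that $W$ and $\xtilde$ agree as functors and not merely object-wise. For $\tau\in\sigmabra(\ua;\ub)$ the transition map $W(\tau)$ is induced by post-composing the structure maps $\sigma$ with $\tau$; rewriting $\tau\sigma$ through the commuting square \eqref{sigma-tau-pi} as $\pi\circ\gamma(\{\pi_j\})$ with $\pi$ order-preserving shows that $W(\tau)$ sends the summand indexed by $\sigma$ to the summand indexed by $\pi$ via the internal structure map $X\{\pi_j\}$ of \eqref{xpij}, which is the very definition of $\xtilde(\tau)$. I expect the main obstacle to lie in this last step: pinning down that the permutations $\pi_j$ and $\pi$ extracted from \eqref{sigma-tau-pi} are precisely the data recording the structure maps of $\xtilde$, so that the bijection of components upgrades to an isomorphism of $\sigmabra$-diagrams rather than a mere level-wise bijection.
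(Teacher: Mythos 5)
Your proof is correct, but it takes a genuinely different route from the paper's. The paper never invokes the pointwise colimit formula: it verifies the universal property of the left adjoint to restriction directly, taking an arbitrary map $f : X \to Y$ with $Y \in \calm^{\sigmabra}$, defining the extension $\ftilde$ on the summand indexed by $\sigma$ as $Y(\sigma) \circ f$, checking that $\ftilde$ is a map of $\sigmabra$-diagrams by means of the commuting square \eqref{sigma-tau-pi}, and deducing uniqueness from equivariance together with the extension requirement. Your computation over the comma category $(\gamma \downarrow \ua)$ buys something the paper leaves implicit: the observation that every object of $(\gamma \downarrow \ua)$ has trivial automorphism group (because the block-sum homomorphism $\prod_j \Sigma_{|\ua_j|} \to \Sigma_{|\ua|}$ is injective) is precisely the structural reason the Kan extension is a plain coproduct with no coinvariants, and your identification of the connected components with order-preserving maps explains where the indexing set $\sigmabra'$ comes from. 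The paper's route is shorter and sidesteps the step you rightly flag as delicate, namely upgrading the object-wise bijection of summands to an isomorphism of $\sigmabra$-diagrams; in exchange it offers less insight into why the answer has the shape it does.

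One point needs tightening where you declare that your coproduct is ``precisely'' \eqref{kan-extension-formula}. Your comma-category calculation yields, for one fixed choice of representatives $\ua_j$, exactly one summand per element of $\sigmabra'\bigl((\ua_1,\ldots,\ua_m);\ua\bigr)$, i.e.\ one per connected component. The displayed formula \eqref{kan-extension-formula} carries an additional outer coproduct over $\{\ua_j \in \sigmabraj\}$; if that is read as ranging over all objects of each orbit groupoid, the two expressions differ by a factor of $\prod_j |\Ob(\sigmabraj)|$ (already visible for $m=1$, where the Kan extension along the isomorphism $\sigmabraone \cong \sigmabra$ must return $X$ itself, whereas a literal reading of the double coproduct returns $|\Ob(\sigmabraone)|$ copies). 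Your own computation shows that the reading compatible with the proposition is the one you implicitly adopt --- a single chosen tuple of representatives --- so you should state that interpretation explicitly rather than assert the match; as written, the final identification claims more than your (correct) calculation delivers.
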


\begin{proof}
First note that $\Kan^{\sigmabra}$ is the left adjoint
\[
\nicexy{
\calm^{\sigmabraone \times \cdots \times \sigmabram} 
\ar@<2pt>[r]^-{\Kan^{\sigmabra}} 
& \calm^{\sigmabra} \ar@<2pt>[l]
}\]
to the forgetful functor.  So we must show that $\xtilde$ has the universal property of the left adjoint.  Suppose $Y \in \calm^{\sigmabra}$ and
\[
f : X \to Y \in \calm^{\sigmabraone \times \cdots \times \sigmabram}.
\]
The desired unique extension
\[
\ftilde : \xtilde \to Y \in \calm^{\sigmabra}
\]
is defined as follows.  Suppose given an object $\ua \in [\ua]$, objects $\ua_j \in [\ua_j]$ for $1 \leq j \leq m$, and $\sigma \in \sigmabra'\bigl((\ua_1,\ldots,\ua_m); \ua\bigr)$.  Then the restriction of $\ftilde$ to $X(\ua_1;\ldots; \ua_m)_{\sigma}$ ($=$ the copy of $X(\ua_1;\ldots; \ua_m)$ in $\xtilde(\ua)$ corresponding to $\sigma$) is defined as the composition
\[
\nicexy{
X(\ua_1;\ldots; \ua_m)
\ar[r]^-{f}
& Y(\ua_1;\ldots; \ua_m)
\ar[r]^-{Y(\sigma)}
& Y(\ua).
}\]
That $\ftilde$ is a map in $\calm^{\sigmabra}$ follows from the following commutative diagram, in which we use the notations from Definition \ref{def:explicit-kan}:
\[
\nicexy{
\xtilde(\ua) \supseteq X(\ua_1;\ldots; \ua_m)_{\sigma}
\ar[d]_-{X\{\pi_j\}} \ar[r]^-{f}
& Y(\ua_1;\ldots; \ua_m)
\ar[d]_-{Y\{\pi_j\}} \ar[r]^-{Y(\sigma)}
& Y(\ua) \ar[d]^-{Y(\tau)}
\\
\xtilde(\ub) \supseteq X(\ua_1;\ldots; \ua_m)_{\pi}
\ar[r]^-{f}
& Y(\ua_1;\ldots; \ua_m)
\ar[r]^-{Y(\pi)}
& Y(\ub).
}\]
The left square is commutative because $f \in \calm^{\sigmabraone \times \cdots \times \sigmabram}$.  The right square is commutative because the square \eqref{sigma-tau-pi} is commutative.  The uniqueness of $\ftilde$ follows from the requirement that it extends $f$ and that it is $\sigmabra$-equivariant.
\end{proof}

So the upshot is that the Kan extension appearing in the $\fC$-colored circle product is given by the formulas in Definition \ref{def:explicit-kan}.

The following observation will be used to show that the $\fC$-colored circle product is associative.

\begin{lemma}
\label{circle-tensor}
Suppose $Y,Z \in \symseqcm$, and $[\ua], [\uc] \in \pofc$.  Then there is an isomorphism
\begin{equation}
\label{ycompzca}
(Y \comp Z)^{[\uc]}([\ua])
\cong
\coprod_{[\ub] \in \pofc}
Y^{[\uc]}([\ub]) \tensorover{\sigmabrb} Z^{[\ub]}([\ua])
\end{equation}
in $\calm^{ \sigmaopa \times \sigmabrc}$.
\end{lemma}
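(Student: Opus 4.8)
The plan is to expand both sides of \eqref{ycompzca} directly from the defining formulas \eqref{ytensorc} and \eqref{xcircley} together with the explicit left Kan extension of Definition \ref{def:explicit-kan}, and then to reconcile the two expansions by a sequence of colimit manipulations. The guiding principle is that every operation entering the $\fC$-colored circle product commutes with coproducts and, in the appropriate sense, with one another: the tensor $\otimes$ preserves colimits in each variable since $\calm$ is closed; the construction $\tensorover{\cald}$ of Definition \ref{def:tensorover} is a colimit; and the left Kan extension $\Kan$ is a left adjoint by Proposition \ref{kan-sigma}. Consequently the only genuine content is a bijection of indexing sets together with two categorical identities singled out below; all remaining steps are formal rearrangements of coproducts, tensors, and colimits.

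First I would expand the left-hand side. Writing $\uc = (c_1,\ldots,c_m)$ and applying \eqref{ytensorc} to $(Y\comp Z)^{\uc}([\ua])$ replaces each entry $(Y\comp Z)\binom{c_j}{[\ua_j]}$ by the coproduct over orbits $[\ub_j]$ supplied by \eqref{xcircley}, namely $\coprod_{[\ub_j]} Y\binom{c_j}{[\ub_j]} \tensorover{\sigmabrbj} Z^{[\ub_j]}([\ua_j])$. Distributing $\bigotimes_{j=1}^m$ over these coproducts and pulling $\Kan^{\sigmabraop}$ through the coproduct (it is a left adjoint) exhibits the left-hand side as a coproduct, indexed by decompositions $[\ua]=[(\ua_1,\ldots,\ua_m)]$ together with families $\{[\ub_j]\}_{1\le j\le m}$, of
\[
\Kan^{\sigmabraop}\left[\bigotimes_{j=1}^m \left(Y\binom{c_j}{[\ub_j]} \tensorover{\sigmabrbj} Z^{[\ub_j]}([\ua_j])\right)\right].
\]

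Next I would expand the right-hand side. Summing $Y^{[\uc]}([\ub])$ over $[\ub]$ and over its internal decompositions $[\ub]=[(\ub_1,\ldots,\ub_m)]$ amounts to summing freely over families $\{[\ub_j]\}$, with $Y^{[\uc]}([\ub])$ contributing $\Kan^{\sigmabrbop}[\bigotimes_j Y\binom{c_j}{[\ub_j]}]$. The two identities driving the reconciliation are: \textbf{(A)} a Fubini/adjunction identity for the concatenation functor $K\colon \prod_{j=1}^m \sigmabrbj \to \sigmabrb$, namely $(\Kan_K A)\tensorover{\sigmabrb} B \cong A \tensorover{\prod_j \sigmabrbj} K^{\ast}B$ for $K^{\ast}$ the restriction functor, which converts the single coend $\tensorover{\sigmabrb}$ into the product of coends $\tensorover{\sigmabrbj}$ occurring on the left; and \textbf{(B)} the functoriality of left Kan extensions along a composite of concatenations, which both supplies the ``exponential law'' $Z^{[(\ub_1,\ldots,\ub_m)]}([\ua]) \cong \coprod \Kan^{\sigmabraop}[\bigotimes_j Z^{[\ub_j]}([\ua_j])]$ (the sum over decompositions of $[\ua]$ refining the blocks of $\ub$) and reconciles the single outer extension $\Kan^{\sigmabraop}$ on the left with the nested extensions produced on the right. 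Applying (A), then (B), and finally the Fubini isomorphism $\bigl(\bigotimes_j Y\binom{c_j}{[\ub_j]}\bigr)\tensorover{\prod_j \sigmabrbj}\bigl(\bigotimes_j Z^{[\ub_j]}([\ua_j])\bigr)\cong \bigotimes_j\bigl(Y\binom{c_j}{[\ub_j]}\tensorover{\sigmabrbj} Z^{[\ub_j]}([\ua_j])\bigr)$ for coends over a product groupoid, the right-hand side acquires exactly the summand displayed above, indexed by the same families $\{[\ub_j]\}$ and decompositions $[(\ua_1,\ldots,\ua_m)]$.

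It then remains to record the evident bijection of indexing data --- a decomposition $[\ua]=[(\ua_1,\ldots,\ua_m)]$ with a family $\{[\ub_j]\}$ on the left against the orbit $[\ub]=[(\ub_1,\ldots,\ub_m)]$ with its decomposition on the right --- and to verify that the resulting isomorphism is one of $\sigmaopa \times \sigmabrc$-diagrams. I expect this equivariance check to be the main obstacle: one must track the order-preserving maps of $\sigmabra'(\cdots)$ and the canonical permutations $\pi_j,\pi$ of the square \eqref{sigma-tau-pi} through identities (A) and (B), confirming that the $\sigmaopa$-action carried by the outer $\Kan^{\sigmabraop}$ and, especially, the left $\sigmabrc$-action obtained by permuting the blocks of $\uc$ in passing from $Y^{\uc}$ to $Y^{[\uc]}$ are matched on the two sides. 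The colimit manipulations are purely formal; the care resides entirely in this bookkeeping of symmetric-group data.
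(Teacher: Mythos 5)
Your proposal is correct and follows essentially the same route as the paper's proof: the paper also expands $Y^{\uc}([\ub])\tensorover{\sigmabrb}Z^{[\ub]}([\ua])$ via \eqref{ytensorc}, interchanges the Kan extension with the coend $\tensorover{\sigmabrb}$, redistributes the coproduct over the families $\{[\ub_j]\}$ through the tensor product (the paper's isomorphism $*$, justified by a universal-property argument), recognizes the result as $(Y\comp Z)^{\uc}([\ua])$, and closes with a one-line check of compatibility with the $\sigmabrc$-action. Your identities (A) and (B) are exactly the content the paper compresses into its citation of \eqref{ytensorc}, so your write-up is, if anything, more explicit about where the real work lies.
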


\begin{proof}
Denote by $W$ the right side of \eqref{ycompzca}.  Suppose $|\uc| = p$. For $\uc \in [\uc]$, there are isomorphisms in $\calm^{\sigmaopa}$ :
\[
\begin{split}
W(\uc)
&= \coprod_{[\ub] \in \pofc}
Y^{\uc}([\ub]) \tensorover{\sigmabrb} Z^{[\ub]}([\ua])
\\
&\cong \coprod_{\substack{\{[\ub_j] \in \pofc\}_{1\leq j \leq p}
\\ \{[\ua_j] \in \pofc\}_{1 \leq j \leq p} \mathrm{\,s.t.\,}
\\ [\ua] = [(\ua_1,\ldots,\ua_p)]}}
\Kan^{\sigmaopa} \left[ \bigotimes_{j=1}^p 
Y\cjbrbj \tensorover{\sigmabrbj} Z^{[\ub_j]}([\ua_j])
\right] \quad \text{(by \eqref{ytensorc})}
\\
&\buildrel * \over \cong  
\coprod_{\substack{\{[\ua_j] \in \pofc\}_{1 \leq j \leq p} \mathrm{\,s.t.\,}
\\ [\ua] = [(\ua_1,\ldots,\ua_p)]}}
\Kan^{\sigmaopa} \left[
\bigotimes_{j=1}^p \left(\coprod_{[\ub_j] \in \pofc} 
Y \cjbrbj \tensorover{\sigmabrbj} Z^{[\ub_j]}([\ua_j])\right)
\right]
\\
&= (Y \comp Z)^{\uc}([\ua]).
\end{split}
\]
The isomorphism $*$ follows from the fact that a map out of each of the two objects under consideration is equivalent to a map out of the other object in $\calm^{\sigmaopa}$.  Finally, observe that the above isomorphisms are compatible with the maps in  $\sigmabrc$.
\end{proof}

\begin{proposition}
\label{circle-product-monoidal}
With respect to $\circ$, $\symseqcm$ is a monoidal category.
\end{proposition}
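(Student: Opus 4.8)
The plan is to exhibit a unit object together with associativity and unit isomorphisms, and then to verify the coherence axioms. For the unit I take the $\fC$-colored symmetric sequence $\sI$ concentrated in ``arity one'',
\[
\sI\duc =
\begin{cases}
I & \text{if } \uc = (d),\\
\varnothing & \text{otherwise,}
\end{cases}
\]
which generalizes the unit for the one-colored circle product. The heart of the argument is the associativity isomorphism, for which Lemma \ref{circle-tensor} does the essential work; the unit isomorphisms and the coherence conditions are then comparatively routine.

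First I would verify the unit isomorphisms $\sI \circ Y \cong Y \cong Y \circ \sI$ by direct inspection of \eqref{xcircley} and \eqref{ytensorc}. For the left unit, in $(\sI \circ Y)\singledbrb = \coprod_{[\uc]} \sI\singledbrc \tensorover{\sigmabrc} Y^{[\uc]}([\ub])$ only the length-one orbit $[\uc] = [(d)]$ contributes, since $\sI\singledbrc = \varnothing$ otherwise; there $\sigmabrc$ is trivial and $Y^{[(d)]}([\ub]) = Y\singledbrb$ by \eqref{ytensorc} with $m=1$, giving $\sI \circ Y \cong Y$. For the right unit one computes $\sI^{[\uc]}([\ub])$ from \eqref{ytensorc}: the factor built from $\sI$ vanishes unless each block of $\ub$ is a single color matching the corresponding entry of $\uc$, which forces $[\ub] = [\uc]$ and leaves $\sI^{[\uc]}([\uc])$ acting as the unit for the pairing $- \tensorover{\sigmabrc} -$, whence $Y \circ \sI \cong Y$.

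Next comes associativity. Iterating \eqref{xcircley} on $(X \circ Y) \circ Z$, and applying Lemma \ref{circle-tensor} to $(Y \circ Z)^{[\ub]}([\ua])$ inside $X \circ (Y \circ Z)$, I would show that both sides reduce, naturally in $X, Y, Z$, to the common double coproduct
\[
\coprod_{[\ub],[\uc] \in \pofc}
X\singledbrb \tensorover{\sigmabrb} Y^{[\ub]}([\uc]) \tensorover{\sigmabrc} Z^{[\uc]}([\ua])
\]
in the component at output $d$ and input orbit $[\ua]$. This interchange uses only that $- \otimes -$ preserves colimits in each variable (as $\calm$ is closed monoidal), so that it distributes over the coproducts, together with the Fubini-type commutation of the iterated $\tensorover{\cald}$ constructions, which are themselves colimits. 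These canonical isomorphisms assemble into the associator.

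Finally I would check coherence. Every structure isomorphism above is built from the canonical associativity, unit, and distributivity isomorphisms of the symmetric monoidal category $\calm$ together with interchange-of-colimits isomorphisms, all of which are natural. Consequently the pentagon and triangle identities in $\symseqcm$ reduce componentwise to the corresponding coherence in $\calm$, which holds by Mac Lane coherence. I expect the main obstacle to be bookkeeping rather than conceptual: tracking the orbit indices and the $\Sigma$-equivariance through the iterated Kan extensions when assembling the associator, which is precisely the technical point that Lemma \ref{circle-tensor} is designed to absorb.
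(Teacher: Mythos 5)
Your proposal is correct and follows essentially the same route as the paper: the same unit object $\scrI$ concentrated in arity one, and associativity established by reducing both iterated circle products to the common double coproduct $\coprod_{[\ub],[\uc]} X\singledbrc \tensorover{\sigmabrc} Y^{[\uc]}([\ub]) \tensorover{\sigmabrb} Z^{[\ub]}([\ua])$, with Lemma \ref{circle-tensor} carrying the key interchange. The paper leaves the unit isomorphisms and coherence implicit, which you spell out slightly more, but the substance is identical.
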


\begin{proof}
The $\comp$-unit is the $\fC$-colored symmetric sequence $\scrI$ with entries
\begin{equation}
\label{circle-unit}
\scrI \duc = 
\begin{cases}
I & \text{if $\uc = d$},\\
\varnothing & \text{if $\uc \not= d$}
\end{cases}
\end{equation}
for $(\uc;d) \in \sigmacopc$.  To prove associativity, suppose $X,Y,Z \in \symseqcm$, $d \in \fC$, and $[\ua] \in \pofc$.  Then in $\calm^{\sigmaopa \times \{d\}}$ there are isomorphisms:
\[
\begin{split}
&\left[(X \comp Y) \comp Z\right] \singledbra\\
&= \coprod_{[\ub] \in \sigmaofc} (X \comp Y)\singledbrb \tensorover{\sigmab} Z^{[\ub]}([\ua])
\\
&= \coprod_{[\ub] \in \sigmaofc} 
\left[ \coprod_{[\uc] \in \sigmaofc} 
X\singledbrc \tensorover{\sigmac} Y^{[\uc]}([\ub]) \right]
 \tensorover{\sigmab} Z^{[\ub]}([\ua])
 \\
&\cong \coprod_{[\ub],\, [\uc] \in \sigmaofc} 
X \singledbrc \tensorover{\sigmac} \left[
Y^{[\uc]}([\ub]) \tensorover{\sigmab} Z^{[\ub]}([\ua]) \right]
\\
&\cong \coprod_{[\uc] \in \sigmaofc} X \singledbrc \tensorover{\sigmac}
\left[
\coprod_{[\ub] \in \sigmaofc} 
Y^{[\uc]}([\ub]) \tensorover{\sigmab} Z^{[\ub]}([\ua]) \right]
\\
&\cong \coprod_{[\uc] \in \sigmaofc}
X \singledbrc \tensorover{\sigmac} (Y \comp Z)^{[\uc]}([\ua])
\quad \text{(by \eqref{ycompzca})}
\\
&= \left[X \comp (Y \comp Z)\right] \singledbra.
\end{split}
\]
Since $d \in \fC$ and $[\ua] \in \sigmaofc$ are arbitrary, there is an isomorphism
\[
(X \comp Y) \comp Z
\cong
X \comp (Y \comp Z)
\]
in $\symseqcm$.
\end{proof}

\subsection{Colored Operads as Monoids}

\begin{definition}
\label{def:colored-operad}
For a non-empty set $\fC$ of colors, denote by
\[
\operadsigmacm
\]
or simply  $\operadsigmac$ the category of monoids \cite{maclane} (VII.3) in the monoidal category $(\symseqcm, \comp, \scrI)$.  An object in $\operadsigmac$ is called a \textbf{$\fC$-colored operad} in $\calm$.
\end{definition}

\begin{remark}
The $\Sigma$ in the notation $\operadsigmac$ is supposed to remind the reader that our colored operads have equivariant structures.  In the literature, a $\fC$-colored operad is sometimes called a symmetric multi-category with object set $\fC$.
\end{remark}

\begin{remark}
Unpacking Definition \ref{def:colored-operad}, a $\fC$-colored operad is equivalent to a triple $(\sO, \gamma, \bone)$ consisting of:
\begin{itemize}
\item
$\sO \in \symseqcm$,
\item
a \textbf{$\fC$-colored unit} map
\[
\nicexy{I \ar[r]^-{\bone_c} & \sO\ccsingle} \in \calm
\]
for each color $c \in \fC$, and
\item
\textbf{operadic composition}
\begin{equation}
\label{operadic-comp}
\nicexy{
\sO\duc \otimes \bigotimes\limits_{i=1}^m \sO\singlecibi \ar[r]^-{\gamma} 
& \sO\dub \in \calm
}
\end{equation}
for all $d \in \fC$, $\uc  = (c_1,\ldots,c_m) \in \sigmaofc$ with $m \geq 1$, and $\ub_i \in \sigmaofc$, where $\ub = (\ub_1,\ldots,\ub_m)$.
\end{itemize}
The triple $(\sO,\gamma,\bone)$ is required to satisfy some associativity, unity, and equivariance axioms, the details of which can be found in \cite{jy2} (11.14).   The detailed axioms in the one-colored case can also be found in \cite{may97}.  This way of expressing a $\fC$-colored operad is close to the way an operad was defined in \cite{may72}.  There are other equivalent ways to formulate the definition of a $\fC$-colored operad.

Intuitively, one should think of the component $\sO\duc \in \calm$ as the object of operations of the form,
\begin{center}
\begin{tikzpicture}
\matrix[row sep=1cm, column sep=1cm]{
\node [plain, label=below:$...$] (f) {$f$};\\
};
\draw [outputleg] (f) to node[above=.1cm]{$d$} +(0,.7cm);
\draw [inputleg] (f) to node[below left=.1cm]{$c_1$} +(-.7cm,-.5cm);
\draw [inputleg] (f) to node[below right=.1cm]{$c_m$} +(.7cm,-.5cm);
\end{tikzpicture}
\end{center}
where $\uc = (c_1,\ldots,c_m)$ are the input colors (with $m=0$ allowed) and $d$ is the unique output color.  The symmetry in $\sO$ corresponds to permutations of the input colors.  The operadic composition $\gamma$ corresponds to the $2$-level tree:
\begin{center}
\begin{tikzpicture}
\matrix[row sep=.1cm, column sep=1.2cm]{
& \node [plain, label=below:$...$] (f) {$f$}; &\\
\node [plain, label=below:$...$] (g1) {$g_1$}; &&
\node [plain, label=below:$...$] (gm) {$g_m$};\\
};
\draw [outputleg] (f) to node[at end]{$d$} +(0,.8cm);
\foreach \x in {g1,gm}
{
\draw [arrow] (\x) to (f);
}
\draw [inputleg] (g1) to node[below left=.1cm]{$b^1_1$} +(-.8cm,-.6cm);
\draw [inputleg] (g1) to node[below right=.1cm]{$b^1_{k_1}$} +(.8cm,-.6cm);
\draw [inputleg] (gm) to node[below left=.1cm]{$b^m_1$} +(-.8cm,-.6cm);
\draw [inputleg] (gm) to node[below right=.1cm]{$b^m_{k_m}$} +(.8cm,-.6cm);
\end{tikzpicture}
\end{center}
Here $f$ must have non-empty inputs (i.e., $m \geq 1$), but each $k_i$ may be $0$.  In particular, the inputs of this $2$-level tree are the concatenation of the lists $\left(b^i_1, \ldots , b^i_{k_i}\right)$ for $1 \leq i \leq m$.  Associativity of the operadic composition takes the form of a $3$-level tree.  The $c$-colored unit map corresponds to the tree $\uparrow_c$ with no vertices.  Detailed discussion of graphs, and in particular trees, related to operads can be found in \cite{jy2} (Part I).  Using such trees, it is possible to show that a $\fC$-colored operad is exactly an algebra over a certain monad associated to the pasting scheme of unital trees \cite{jy2} (11.16).  There is also a description of $\fC$-colored operads based on certain $\comp_i$-operations.
\end{remark}

\begin{remark}
In the one-colored case (i.e., $\fC = \{*\}$), write $\operadsigma$ for $\operadsigmac$, whose objects are called \textbf{$1$-colored operads}.  In this case we write $\sO(n)$ for the $([n]; *)$-component of $\sO \in \operadsigma$, where $[n]$ is the orbit of the $\{*\}$-profile consisting of $n$ copies of $*$ (this orbit has only one object).  Our notion of a $1$-colored operad agrees with the notion of an operad in, e.g., \cite{may97} and \cite{harper-jpaa}.  Note that even for $1$-colored operads, our definition is slightly more general than the one in \cite{mss} (II.1.2) because ours has the $0$-component $\sO(0)$, corresponding to the empty $\{*\}$-profile.  In general the purpose of the $0$-component (whether in the one-colored or the general colored cases) is to encode units in $\sO$-algebras, e.g., units in associative algebras.  Also note that in \cite{may72}, where an operad was first defined in the topological setting, the $0$-component was required to be a point.
\end{remark}

\begin{definition}
Suppose $n \geq 0$.  A $\fC$-colored symmetric sequence $X$ is said to be \textbf{concentrated in arity $n$} if
\[
|\uc| \not= n 
\quad \Longrightarrow\quad 
X\duc = \varnothing ~ \text{for all $d \in \fC$.}
\]
\end{definition}

\begin{example} 
\begin{enumerate}
\item
A $\fC$-colored symmetric sequence concentrated in arity $0$ is precisely a $\fC$-colored object via the fully faithful imbedding in Remark \ref{rk:level-zero}.  In the $\fC$-colored circle product $X \comp Y$ \eqref{xcircley}, if $Y$ is concentrated in arity $0$, then so is $X \comp Y$ because, by \eqref{ytensorc}, 
\[
\ub \not= \varnothing \quad \Longrightarrow \quad Y^{\uc}([\ub]) = \varnothing
\]
for all $\uc$.  So if $\sO$ is a $\fC$-colored operad, then the functor
\begin{equation}
\label{o-comp-monad}
\sO \comp - : \calmc \to \calmc
\end{equation}
defines a monad \cite{maclane} (VI.1)  whose monadic multiplication and unit are induced by the multiplication $\sO \comp \sO \to \sO$ and the unit $\scrI \to \sO$, respectively.
\item
A $\fC$-colored operad concentrated in arity $1$ is also called a \textbf{ring with several objects}.  Note that a $\fC$-colored operad $\sO$ concentrated in arity $1$ is exactly a small category with object set $\fC$ enriched in $\calm$.  In this case, the non-trivial operadic compositions correspond to the categorical compositions.  Restricting further to the $1$-colored case $(\fC = \{*\})$, a $1$-colored operad concentrated in arity $1$ is precisely a monoid in $\calm$.
\end{enumerate}
\end{example}

\section{Algebras over Colored Operads}
\label{sec:alg-over-colored}

In this section, we define algebras over a colored operad and study their categorical properties.  The main result of this section is the filtration in \eqref{aoycolim} for the pushout of an $\sO$-algebra against a free map.  This filtration is a key component in establishing the desired (semi-)model structures on the category of $\sO$-algebras.

As before $(\calm, \otimes, I, \Hom)$ is a symmetric monoidal closed category with all small limits and colimits.  A model structure on $\calm$ is \emph{not} needed yet.

\subsection{Definition and Examples}
\label{subsec:def-colored-algebra}

Fix a non-empty set $\fC$ of colors.

\begin{definition}
\label{colored-operad-algebra}
Suppose $\sO$ is a $\fC$-colored operad.  The category of algebras over the monad \cite{maclane} (VI.2)
\[
\sO \comp - : \calmc \to \calmc
\]
in \eqref{o-comp-monad} is denoted by $\alg(\sO; \calm)$ or simply $\alg(\sO)$, whose objects are called \textbf{$\sO$-algebras} (in $\calm$).
\end{definition}

There are several equivalent ways to formulate the definition of an $\sO$-algebra.  To describe it more explicitly using the $\fC$-colored circle product, we use the following construction.

\begin{definition}
\label{def:asubc}
Suppose $A = \{A_c\}_{c\in \fC} \in \calm^{\fC}$ is a $\fC$-colored object.  For $\uc = (c_1,\ldots,c_n) \in \pofc$ and associated orbit $[\uc]$, define the object
\begin{equation}
\label{asubc}
A_{\uc} = \bigotimes_{i=1}^n A_{c_i} = A_{c_1} \otimes \cdots \otimes A_{c_n} \in \calm
\end{equation}
and the diagram $A_{\smallbrc} \in \calm^{\sigmabrc}$ with values
\begin{equation}
\label{asubbrc}
A_{\smallbrc}(\uc') = A_{\uc'}
\end{equation}
for each $\uc' \in [\uc]$.  All the structure maps in the diagram $A_{\smallbrc}$ are given by permuting the factors in $A_{\uc}$.
\end{definition}

\begin{remark}[Unwrapping $\sO$-Algebras]
From the definition of the monad $\sO \comp -$, an $\sO$-algebra $A$ has a structure map $\mu : \sO \comp A \to A \in \calmc$.  For each color $d \in \fC$, the $d$-colored entry of $\sO \comp A$ is
\begin{equation}
\label{o-comp-a-d}
(\sO \comp A)_d = \coprod_{[\uc] \in \sigmaofc} 
\sO\singledbrc \tensorover{\sigmabrc} A_{\smallbrc}.
\end{equation}
So the $d$-colored entry of the structure map $\mu$ consists of maps
\[
\nicexy{
\sO\singledbrc \tensorover{\sigmabrc} A_{\smallbrc}
\ar[r]^-{\mu}
& A_d \in \calm
}\]
for all orbits $[\uc] \in \sigmaofc$.  The $\otimes_{\sigmabrc}$ here means that we can unpack $\mu$ further into maps
\begin{equation}
\label{algebra-map-unpack}
\nicexy{
\sO\duc \otimes A_{\uc}
\ar[r]^-{\mu}
& A_d \in \calm
}
\end{equation}
for all $d \in \fC$ and all objects $\uc \in \sigmaofc$.  Then an $\sO$-algebra is equivalent to a $\fC$-colored object $A$ together with structure maps \eqref{algebra-map-unpack} that are associative, unital, and equivariant in an appropriate sense, the details of which can be found in \cite{jy2} (13.37).  The detailed axioms in the $1$-colored case can also be found in \cite{may97}.  Note that when $\uc = \varnothing$, the map \eqref{algebra-map-unpack} takes the form
\begin{equation}
\label{d-colored-units}
\nicexy{\sO \singledempty \ar[r]^-{\mu} & A_d}
\end{equation}
for $d \in \fC$.  In practice this $0$-component of the structure map gives $A$ the structure of $d$-colored units.  For example, in a unital associative algebra, the unit arises from the $0$-component of the structure map.
\end{remark}

Some examples of colored operads and their algebras follow.

\begin{example}[Initial and Terminal Colored Operads]
Suppose $\fC$ is a non-empty set of colors.
\begin{enumerate}
\item
The \textbf{initial $\fC$-colored operad} is the object $\scrI$ in \eqref{circle-unit}, whose $c$-colored unit is the identity map for each color $c \in \fC$.  Its operadic composition is given by the isomorphism $I \otimes I \cong I$.
\item
The \textbf{terminal $\fC$-colored operad} is the object in which every entry is the terminal object $*$ in $\calm$.
\end{enumerate}
\end{example}

\begin{example}[Free Operadic Algebras]
\label{ex:free-algebra}
Suppose $\sO$ is a $\fC$-colored operad.  
\begin{enumerate}
\item
There is an adjoint pair
\begin{equation}
\label{free-algebra-adjoint}
\nicexy{
\calmc \ar@<2pt>[r]^-{\sO \comp -} 
& \alg(\sO) \ar@<2pt>[l]
}
\end{equation}
in which the right adjoint is the forgetful functor.  So for a $\fC$-colored object $A$, the object $\sO \comp A$ has the canonical structure of an $\sO$-algebra, called the \textbf{free $\sO$-algebra of $A$}.  In particular, free $\sO$-algebras always exist.
\item
The initial object $\emptyset$ in $\calmc$ consists of the initial object in $\calm$ in each entry.  Since $\sO \comp -$ is a left adjoint, it preserves colimits and, in particular, the initial object.  So the image $\sO \comp \emptyset$ is the \textbf{initial $\sO$-algebra}, denoted $\emptyset_{\sO}$.  It follows from \eqref{o-comp-a-d} that, for each color $d \in \fC$, its $d$-colored entry is
\begin{equation}
\label{initial-o-algebra}
(\emptyset_{\sO})_d = (\sO \comp \emptyset)_d = \sO\singledempty.
\end{equation}
Its $\sO$-algebra structure map, in the form \eqref{algebra-map-unpack},
\[
\nicexy@R-10pt{
\sO\duc \otimes \bigotimes\limits_{i=1}^m (\emptyset_{\sO})_{c_i} 
\ar[d]_-{=} \ar[r] 
&
(\emptyset_{\sO})_d \ar[d]^-{=}
\\
\sO\duc \otimes \bigotimes\limits_{i=1}^m \sO\singleciempty \ar[r] 
&
\sO\singledempty,
}\]
is the operadic composition \eqref{operadic-comp} of $\sO$ with $\ub_i = \emptyset$ for $1 \leq i \leq m$.  For an $\sO$-algebra $A$, the unique $\sO$-algebra map $\emptyset_{\sO} \to A$ has the map $\sO\singledempty \to A_d$ in \eqref{d-colored-units} as its $d$-colored entry.
\end{enumerate}
\end{example}

\begin{example}[$\fC$-Colored Operads as Operadic Algebras]
\label{ex:operad-of-operad}
For each non-empty set of colors $\fC$, there exist an $[\Ob(\pofcop) \times \fC]$-colored operad $\opc$ and an isomorphism
\begin{equation}
\label{colored-op-algebras}
\operadsigmac \cong \alg(\opc).
\end{equation}
So $\fC$-colored operads are equivalent to algebras over the $[\Ob(\pofcop) \times \fC]$-colored operad $\opc$.  This is a special case of \cite{jy2} (14.4), which describes any category of generalized props (of which $\operadsigmac$ is an example) as a category of algebras over some colored operad.  Together with Example \ref{ex:free-algebra}, it follows that \textbf{free $\fC$-colored operads} ($=$ free $\opc$-algebras) always exist.  The colored operad $\opc$ is entry-wise a coproduct of copies of the $\otimes$-unit $I$.    In fact, its construction begins with an $[\Ob(\pofcop) \times \fC]$-colored operad $\opcset$ in the symmetric monoidal category of sets and Cartesian products.  There is a strong symmetric monoidal functor
\[
\set \to \calm, \quad S \longmapsto \coprod_S I.
\]
The colored operad $\opc$ is the entry-wise image of $\opcset$ under this strong symmetric monoidal functor.  Therefore, if $\calm$ has a model structure in which $I$ is cofibrant, then $\opc$ is entry-wise cofibrant.  In fact, when $I$ is cofibrant, a careful inspection of $\opc$ shows that its image $\opc \in \symseqcm$ is cofibrant.
\end{example}

\begin{example}[Diagrams of Algebras]
Suppose $\sO$ is a $\fC$-colored operad.  
\begin{enumerate}
\item
There is a $(\fC \bigsqcup \fC)$-colored operad $\sO_{\bullet \to \bullet}$ whose algebras are diagrams $f : A \to B$, in which $A$ and $B$ are $\sO$-algebras and $f$ is a map of $\sO$-algebras.  It can be constructed as a quotient of a free $(\fC \bigsqcup \fC)$-colored operad, the details of which can be found in \cite{fmy} (2.10).  
\item
Similarly, for any small category $\cald$, there exist a $(\coprod_{\Ob(\cald)} \fC)$-colored operad $\sO_{\cald}$ and an isomorphism
\[
\alg(\sO)^{\cald} \cong \alg(\sO_{\cald}),
\]
where the left side is the category of $\cald$-shaped diagrams in $\alg(\sO)$.
\end{enumerate}
\end{example}

\begin{example}[Monoid-Modules]
There is a $2$-colored operad $\asmod$ whose algebras are pairs $(A,M)$, where
\begin{itemize}
\item
$A$ is a monoid in $\calm$ \cite{maclane} (VII.3) and
\item
$M$ is a left $A$-module \cite{maclane} (VII.4).
\end{itemize}
Two colors are needed for such pairs because one color is needed for each of $A$ and $M$.  The $2$-colored operad $\asmod$ can be described as a quotient of a free $2$-colored operad, the details of which can be found in \cite{fmy} (2.11).
\end{example}

\subsection{Limits and Colimits of Colored Operadic Algebras}

Recall the free-forgetful adjoint pair
\[
\nicexy{
\calmc \ar@<2pt>[r]^-{\sO \comp -} 
& \alg(\sO) \ar@<2pt>[l]
}\]
in \eqref{free-algebra-adjoint} for a $\fC$-colored operad.

\begin{proposition}
\label{algebra-bicomplete}
Suppose $\sO$ is a $\fC$-colored operad.  Then the category $\alg(\sO)$ has all small limits and colimits, with reflexive coequalizers and filtered colimits preserved and created by the forgetful functor $\alg(\sO) \to \calmc$.
\end{proposition}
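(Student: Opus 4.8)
The plan is to recognize $\alg(\sO)$ as the Eilenberg--Moore category of the monad $T = \sO \comp -$ on $\calmc$ from \eqref{o-comp-monad}, and then to run the standard machinery for algebras over a monad. First I would record that $\calmc = \prod_{\fC}\calm$ is bicomplete, with limits and colimits computed entrywise, since $\calm$ is. For \emph{limits}, the forgetful functor $U : \alg(\sO) \to \calmc$ creates all small limits, as is automatic for the category of algebras over a monad (cf. \cite{maclane}): given a diagram $\{A_i\}$ of $\sO$-algebras, one forms $L = \lim_i A_i$ in $\calmc$, and the composites $T(L) \to T(A_i) \to A_i$ assemble into a cone, inducing a unique structure map $T(L) \to L$ for which each projection is an algebra map and $L$ is the limit in $\alg(\sO)$. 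This yields completeness.

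For \emph{colimits} I would argue in two stages. The key input is that $U$ creates reflexive coequalizers and filtered colimits, which holds precisely because $T$ \emph{preserves} reflexive coequalizers and filtered colimits; granting this, $U$ creates them by the usual criterion for algebras over a monad. To pass to arbitrary colimits I would invoke a Linton-type argument: since the free functor is a left adjoint it preserves all colimits, every $\sO$-algebra is the reflexive coequalizer $T T A \rightrightarrows T A \to A$ of free algebras (its canonical presentation), and an arbitrary small colimit in $\alg(\sO)$ is then computed as a reflexive coequalizer of colimits of free algebras, all of which exist because $\calmc$ is cocomplete and $\alg(\sO)$ has reflexive coequalizers. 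This both establishes cocompleteness and gives the asserted creation properties for reflexive coequalizers and filtered colimits.

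It remains to verify that $T = \sO \comp -$ preserves reflexive coequalizers and filtered colimits, which is the heart of the matter. Unwinding \eqref{o-comp-a-d}, the $d$-colored entry of $\sO \comp A$ is the coproduct over orbits $[\uc]$ of $\sO\singledbrc \tensorover{\sigmabrc} A_{\smallbrc}$, where $A_{\smallbrc}$ is built from the iterated tensor products $A_{c_1}\otimes\cdots\otimes A_{c_n}$. Coproducts and the coinvariants $\tensorover{\sigmabrc}$ (a colimit over the finite groupoid $\sigmabrc$) commute with all colimits, so everything reduces to the behavior of $A \mapsto A_{\uc}$. Because $\calm$ is closed symmetric monoidal, $\otimes$ is a left adjoint in each variable and hence preserves all colimits separately in each variable; since the diagonal of a filtered category is final, this separate preservation upgrades to joint preservation of filtered colimits by the $n$-fold tensor. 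The main obstacle is the reflexive-coequalizer case, where the diagonal-finality argument fails and the $n$-fold tensor genuinely does not preserve arbitrary colimits: here I would invoke the classical lemma that an $n$-fold tensor product does preserve reflexive coequalizers, equivalently that the diagonal of a multivariable reflexive coequalizer diagram is again a reflexive coequalizer computing the tensor of the colimits. This is the colored analogue of the argument used by \cite{harper-jpaa} in the one-colored case, and once it is in place the two preservation statements, and with them the proposition, follow.
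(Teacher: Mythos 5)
Your proposal is correct and follows essentially the same route as the paper: limits via monadicity over the bicomplete category $\calmc$, reflexive coequalizers and filtered colimits created by the forgetful functor because the monad $\sO\comp -$ (a coproduct of coinvariants over finite connected groupoids of finite tensor products) preserves them, and general colimits built as reflexive coequalizers of free algebras via the canonical presentation. The paper simply cites \cite{borceux} (4.3.1) for limits and \cite{rezk}, \cite{ekmm}, \cite{fresse} for the last step, whereas you spell out the preservation argument (finality of the diagonal for filtered colimits, the classical reflexive-coequalizer lemma for iterated tensors) in more detail.
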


\begin{proof}
By definition $\alg(\sO)$ is the category of algebras over the monad $\sO \comp -$ in the product category $\calmc$, which has all small (co)limits.  So the existence of limits in $\alg(\sO)$ follows from \cite{borceux} (4.3.1).  In each color, the left adjoint $\sO \comp -$ is a coproduct of coinvariants (over finite connected groupoids) of finite tensor products \eqref{o-comp-a-d}.  This implies that $\alg(\sO)$ has filtered colimits and reflexive coequalizers, which are preserved and created by the forgetful functor.  A general colimit in $\alg(\sO)$ can then be constructed as a reflexive coequalizer using a well-known procedure, used in, e.g.,  \cite{rezk} (2.3.5), \cite{ekmm} (II.7.4), and \cite{fresse} (I.4.4-I.4.6).
\end{proof}

\subsection{Filtration for Pushouts of Colored Operadic Algebras}

\begin{definition}
Suppose $X \in \symseqcm$, $d \in \fC$, and $[\ua], [\ub], [\uc]$ are orbits in $\sigmaofc$.  
\begin{enumerate}
\item
Define the diagram
\begin{equation}
\label{xdac}
X \singledbrabrc \in \calm^{\sigmabraop \times \sigmabrcop \times \{d\}}
\end{equation}
as having the objects
\[
X \singledbrabrc(\ua'; \uc') 
= X \singledaprimecprime \in \calm
\]
for $\ua' \in [\ua]$ and $\uc' \in [\uc]$ and the structure maps of $X$.
\item
Likewise, define the diagram
\begin{equation}
\label{xdabc}
X \singledbrabrbbrc \in \calm^{\sigmabraop \times \sigmabrbop \times \sigmabrcop \times \{d\}}
\end{equation}
as having the objects
\[
X \singledbrabrbbrc(\ua'; \ub'; \uc') 
= X \singledaprimebprimecprime \in \calm
\]
for $\ua' \in [\ua]$, $\ub' \in [\ub]$, and $\uc' \in [\uc]$ and the structure maps of $X$.
\end{enumerate}
\end{definition}

\begin{remark}
\label{rk:xdac-restriction}
In other words, $X \singledbrabrc$ is the restriction of the component
\[
X \singledbrac \in \calm^{\sigmabracop \times \{d\}}
\]
of $X \in \symseqcm$ via the inclusion
\[
\nicexy{\sigmabraop \times \sigmabrcop \ar[r] & 
\sigmabracop.}
\]
This construction will be used below for a $\fC$-colored operad.  Similarly, $X \singledbrabrbbrc$ is the restriction of the $\singledbrabc$-component of $X$ via the inclusion
\[
\nicexy{\sigmabraop \times \sigmabrbop \times \sigmabrcop \ar[r] & 
\sigmabrabcop.
}\]
\end{remark}

\begin{definition}[$\sO_A$ for $\sO$-algebras]
\label{oaalgebra}
Suppose $\sO$ is a $\fC$-colored operad and $A \in \alg(\sO)$.  Define $\sO_A \in \symseqcm$ as follows.  For $d \in \fC$ and orbit $[\uc] \in \sigmaofc$, define the component
\begin{equation}
\label{oadc}
\sO_A\singledbrc \in \calm^{\sigmabrcop \times \{d\}}
\end{equation}
as the reflexive coequalizer of the diagram
\begin{equation}
\label{o-sub-a-coequal}
\nicexy{
\coprod\limits_{[\ua] \in \sigmaofc} 
\sO\singledbrabrc
\tensorover{\sigmabra} 
(\sO \circ A)_{\smallbra}
\ar@<-3pt>[d]_-{d_0} 
\ar@<3pt>[d]^-{d_1}
\\
\coprod\limits_{[\ua] \in \sigmaofc} 
\sO\singledbrabrc 
\tensorover{\sigmabra} A_{\smallbra}
\ar@<-1pc>@/_2pc/[u]
}
\end{equation}
with
\begin{itemize}
\item
the coequalizer taken in $\calm^{\sigmabrcop \times \{d\}}$, 
\item
$d_0$ induced by the operadic composition on $\sO$,
\item
$d_1$ induced by the $\sO$-algebra action on $A$, and 
\item
the common section induced by $A \cong \scrI \circ A \to \sO \circ A$.
\end{itemize}
\end{definition}

\begin{proposition}
\label{aplusoofy}
Suppose $\sO$ is a $\fC$-colored operad, $A \in \alg(\sO)$, and $Y \in \calmc$.   Then the $\sO$-algebra
\[
A \coprod (\sO \circ Y)
\]
has the following entries.  For each color $d \in \fC$, there is a natural isomorphism
\begin{equation}
\label{aoyd}
\left[A \coprod (\sO \circ Y)\right]_d 
\cong
\coprod_{[\ub] \in \sigmaofc}  
\left[\sO_A \singledbrb
\tensorover{\sigmabrb} 
Y_{\smallbrb}\right]
= \left(\sO_A \comp Y\right)_d 
\end{equation}
in $\calm$.
\end{proposition}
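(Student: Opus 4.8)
The plan is to present $A \coprod (\sO \circ Y)$ as a reflexive coequalizer of free $\sO$-algebras, pass to underlying $\fC$-colored objects, and then recognize the coequalizer computing each color as the one defining $\sO_A$ in \eqref{o-sub-a-coequal}. First I would invoke the canonical (monadic) presentation of the $\sO$-algebra $A$ as the reflexive coequalizer
\[
\sO \circ (\sO \circ A) \rightrightarrows \sO \circ A \to A
\]
in $\alg(\sO)$, whose two maps are the operadic composition (the multiplication of the monad $\sO \circ -$) and the free extension of the structure map $\sO \circ A \to A$, with common section induced by the unit $\scrI \to \sO$; this is standard monad theory, and the coequalizer is created by the forgetful functor by Proposition \ref{algebra-bicomplete}. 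Applying $-\coprod(\sO \circ Y)$ — a functor preserving connected colimits, in particular reflexive coequalizers — and using that the free functor $\sO \circ -$ of \eqref{free-algebra-adjoint} is a left adjoint and so carries coproducts of $\fC$-colored objects to coproducts of free algebras, I obtain that $A \coprod (\sO \circ Y)$ is the reflexive coequalizer
\[
\sO \circ \bigl((\sO \circ A) \coprod Y\bigr) \rightrightarrows \sO \circ (A \coprod Y) \to A \coprod (\sO \circ Y)
\]
in $\alg(\sO)$, where the inner coproducts are now taken in $\calmc$, one map being induced by operadic composition (folding the outer $\sO$ into the copy of $\sO$ inside $\sO \circ A$) and the other by the structure map $\sO \circ A \to A$.

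By Proposition \ref{algebra-bicomplete} the forgetful functor creates reflexive coequalizers, and colimits in $\calmc = \prod_{\fC} \calm$ are computed color by color, so for each $d \in \fC$ the object $[A \coprod (\sO \circ Y)]_d$ is the coequalizer in $\calm$ of the $d$-entries of the displayed pair. The crux is a splitting lemma: for every $\fC$-colored object $P$ there is a natural isomorphism
\[
[\sO \circ (P \coprod Y)]_d
\cong
\coprod_{[\uc] \in \sigmaofc}
\left(\coprod_{[\ua] \in \sigmaofc}
\sO\singledbrabrc \tensorover{\sigmabra} P_{\smallbra}\right)
\tensorover{\sigmabrc} Y_{\smallbrc}.
\]
I would prove this by expanding the left-hand side through \eqref{o-comp-a-d} as $\coprod_{[\ub]} \sO\singledbrb \tensorover{\sigmabrb} (P \coprod Y)_{\smallbrb}$, distributing each tensor power $(P \coprod Y)_{\ub} = \bigotimes_i (P_{b_i} \coprod Y_{b_i})$ over the coproduct (valid since $\otimes$ preserves colimits in each variable), and then reindexing: each resulting summand is recorded by which input slots are filled from $P$ and which from $Y$, so the profile $[\ub]$ breaks up as an orbit $[\ua]$ of $P$-slots together with an orbit $[\uc]$ of $Y$-slots with $[\ub] = [(\ua,\uc)]$. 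Organising the coinvariants accordingly is exactly the Kan-extension bookkeeping of Definition \ref{def:explicit-kan} and Proposition \ref{kan-sigma}, and the two-variable diagram $\sO\singledbrabrc$ of \eqref{xdac} is precisely what appears once the $P$-slots are separated from the $Y$-slots.

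With the splitting lemma in hand I would set $P = \sO \circ A$ and $P = A$, identifying the $d$-entries of the coequalizer pair with the images of the top and bottom of \eqref{o-sub-a-coequal} under $\coprod_{[\uc]} (-) \tensorover{\sigmabrc} Y_{\smallbrc}$; naturality of the splitting lemma in $P$ shows that one map corresponds to the algebra-action map of \eqref{o-sub-a-coequal} and the other to its operadic-composition map (the labels are interchanged, which is harmless for a coequalizer). Finally, since a coequalizer commutes with the coproduct $\coprod_{[\uc]}$ and with the functor $- \tensorover{\sigmabrc} Y_{\smallbrc}$, which preserves colimits in its first variable, I can pull the coequalizer inside to obtain
\[
[A \coprod (\sO \circ Y)]_d
\cong
\coprod_{[\uc] \in \sigmaofc} \sO_A\singledbrc \tensorover{\sigmabrc} Y_{\smallbrc},
\]
which is \eqref{aoyd} after renaming the dummy orbit $[\uc]$ to $[\ub]$; naturality in $Y$ is automatic from the functoriality of every step. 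The main obstacle is the splitting lemma: all of the genuine combinatorial content — the colored-binomial decomposition of $\sO$ evaluated on a coproduct, together with its compatibility with the coinvariants $\tensorover{\sigmabra}$ and $\tensorover{\sigmabrc}$ — is concentrated there, whereas the surrounding manipulations are formal consequences of cocompleteness and the monad structure.
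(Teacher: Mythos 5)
Your proposal is correct and follows essentially the same route as the paper: present $A$ as the canonical reflexive coequalizer of free algebras, apply $-\coprod(\sO\circ Y)$, compute color-wise via Proposition \ref{algebra-bicomplete}, and recognize the coequalizer defining $\sO_A$ after the splitting of $\left[\sO\circ(P\coprod Y)\right]_d$ into the two-variable form \eqref{oaoyd}. The paper simply states that splitting for the two instances $P=A$ and $P=\sO\circ A$ rather than isolating it as a lemma for general $P$, but the content is identical.
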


\begin{proof}
Since $\sO \comp - $ is a left adjoint, it sends a coproduct in $\calmc$ to a coproduct in $\alg(\sO)$.  Using this fact, we first compute the $d$-colored entry of
\[
[\sO \circ A] \coprod [\sO \circ Y]
\]
in $\calm$:
\begin{equation}
\label{oaoyd}
\begin{split}
\left([\sO \circ A] \coprod [\sO \circ Y]\right)_d
&\cong \left[\sO \circ (A \coprod Y)\right]_d 
\\
&= \coprod_{[\uc] \in \pofc} 
\sO \singledbrc \tensorover{\sigmabrc} 
\left(A \coprod Y\right)_{\smallbrc}
\\
&\cong \coprod_{[\ub] \in \sigmaofc} 
\left(
\coprod_{[\ua]\in\pofc} 
\sO \singledbrabrb \tensorover{\sigmabra} A_{\smallbra}
\right) \tensorover{\sigmabrb}
Y_{\smallbrb}.
\end{split}
\end{equation}
Notice the notation changed from $[\uc]$ to $[\ub]$ as the former splits as $[\uc] = [\ua,\ub]$.  Now replace $A$ in \eqref{oaoyd} with $\sO \circ A$ to obtain:
\begin{equation}
\label{ooaoyd}
\begin{split}
&\left(\left[\sO \circ \sO \circ A\right] 
\coprod \left[\sO \circ Y\right]\right)_d
\\
&\cong \coprod_{[\ub] \in \sigmaofc} 
\left(
\coprod_{[\ua]\in\pofc} 
\sO \singledbrabrb \tensorover{\sigmabra} (\sO \comp A)_{\smallbra}
\right) \tensorover{\sigmabrb}
Y_{\smallbrb}.
\end{split}
\end{equation}
Since $A$ is an algebra over the monad $\sO \comp -$, it is isomorphic to the reflexive coequalizer
\medskip
\begin{equation}
\label{algebra=coequalizer}
\colim\left(
\nicexy{
\sO \circ \sO \circ A 
\ar@<3pt>[r]^-{d_0}
\ar@<-3pt>[r]_-{d_1} & 
\sO \circ A \ar@/_2pc/[l]
}\right) \in \alg(\sO)
\end{equation}
by \cite{borceux} (4.3.3).  So there is an isomorphism
\[ 
\begin{split}
&A \coprod (\sO \circ Y) \\
&\cong
\colim\left(
\nicexy{
[\sO \circ \sO \circ A] \coprod [\sO \circ Y] 
\ar@<3pt>[r]^-{d_0}
\ar@<-3pt>[r]_-{d_1} & 
[\sO \circ A] \coprod [\sO \circ Y]
\ar@/_2pc/[l]
}\right),
\end{split}
\]
in $\alg(\sO)$, where the last reflexive coequalizer can be computed color-wise in $\calm$ by Proposition \ref{algebra-bicomplete}.  Now restrict to a typical $d$-colored entry using \eqref{oaoyd}, \eqref{ooaoyd}, and the definition of $\sO_A$ \eqref{oadc} to obtain the desired isomorphism \eqref{aoyd}.
\end{proof}

\begin{remark}
\label{lower-left-corner}
In the previous Proposition, if $Y$ is concentrated at a single color $c \in \fC$ (so $Y_b = \varnothing$ whenever $b \not= c$), then \eqref{aoyd} becomes
\[
\left[A \coprod (\sO \circ Y)\right]_d 
\cong
\coprod_{t \geq 0}  
\left[\sO_A \singledbrtc
\tensorover{\Sigma_t} 
Y^{\otimes t}\right]
= \left(\sO_A \comp Y\right)_d 
\]
where $tc = (c,\ldots,c)$ has $t$ copies of $c$.
\end{remark}

The following definition appeared in \cite{em06} (section 12) and \cite{harper-jpaa} (7.10)

\begin{definition}[$Q$-Construction]
\label{def:q-construction}
Suppose $i : X \to Y \in \calmc$ is concentrated at a single color $c \in \fC$ (so $X_b = Y_b = \varnothing$ whenever $b \not= c$) and $t \geq 1$.  For $0 \leq q \leq t$, define
\[
Q_q^t = Q_q^{[tc]} \in \M^{\Sigma_t}
\]
as follows.
\begin{itemize}
\item
$Q^{[tc]}_0 = X^{\otimes t}$.
\item
$Q^{[tc]}_t = Y^{\otimes t}$.
\item
For $0 < q < t$ there is a pushout in $\calm^{\Sigma_t}$:
\begin{equation}
\label{inductive-q-one-colored}
\nicexy{
\Sigma_t \dotover{\Sigma_{t-q} \times \Sigma_q} 
\left[ X^{\otimes (t-q)} 
\otimes Q_{q-1}^{[qc]}\right] 
\ar[d]_-{(\id,i_*)} \ar[r]
&
Q^{[tc]}_{q-1} \ar[d]
\\
\Sigma_t \dotover{\Sigma_{t-q} \times \Sigma_q} 
\left[ X^{\otimes (t-q)} 
\otimes 
Y^{\otimes q}\right] \ar[r] 
&
Q^{[tc]}_q.
}
\end{equation}
\end{itemize}
\end{definition}

The following observation is the colored analogue of \cite{harper-jpaa} (7.12) and also appeared in \cite{em06} (section 12).

\begin{proposition}
\label{colored_jpaa7.12}
Suppose $\sO$ is a $\fC$-colored operad, $A \in \alg(\sO)$, $i : X \to Y \in \calmc$ is concentrated at a single color $c \in \fC$, and
\begin{equation}
\label{algebra-pushout}
\nicexy{
\sO \circ X \ar[d]_-{\id \circ i} \ar[r]^-{f} 
& A \ar[d]^-{j}
\\
\sO \circ Y \ar[r] 
& A \coprod_{\sO \circ X} (\sO \circ Y)
}
\end{equation}
is a pushout in $\alg(\sO)$.  Then there is a natural isomorphism
\begin{equation}
\label{aoycolim}
A \coprod_{\sO \circ X} (\sO \circ Y)
 \cong 
\colim\left(
\nicexy{
A_0 \ar[r]^-{j_1} & A_1 \ar[r]^-{j_2} & A_2 \ar[r]^-{j_3} & \cdots
}
\right)
\end{equation}
in $\calmc$ such that the following statements hold.
\begin{itemize}
\item
$A_0 = A$.
\item
For each color $d \in \fC$ and $t \geq 1$, the $d$-colored entry of $A_t$ is inductively defined as the pushout in $\calm$
\begin{equation}
\label{one-colored-jt-pushout}
\nicexy{
\sO_A \singledbrtc
\tensorover{\Sigma_t} 
Q^{t}_{t-1}
\ar[d]_-{\id \tensorover{\Sigma_t} i^{\boxprod t}} 
\ar[r]^-{f^{t-1}_*} 
& 
(A_{t-1})_d \ar[d]^-{j_t}
\\
\sO_A \singledbrtc
\tensorover{\Sigma_t} 
Y^{\otimes t}
\ar[r]_-{\xi_{t}} 
& 
(A_t)_d.
}
\end{equation}
with $f^{t-1}_*$ induced by $f$ and $tc = (c,\ldots,c)$ with $t$ copies of $c$.
\end{itemize}
\end{proposition}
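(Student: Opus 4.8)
The plan is to establish the isomorphism \eqref{aoycolim} in $\calmc$, one color at a time. The right-hand side is formed in $\calmc$ (the entries $A_t$ are defined by the pushouts \eqref{one-colored-jt-pushout} taken in $\calm$), while the left-hand side is the underlying $\fC$-colored object of a pushout in $\alg(\sO)$; the two are to be compared directly. The key computational input is Proposition \ref{aplusoofy}: since $i\colon X\to Y$ is concentrated at the single color $c$, Remark \ref{lower-left-corner} supplies a natural isomorphism $\left[A\coprod(\sO\comp Y)\right]_d\cong\coprod_{t\geq0}\sO_A\singledbrtc\tensorover{\Sigma_t}Y^{\otimes t}$ for every color $d$, and likewise with $Y$ replaced by $X$. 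Now $P:=A\coprod_{\sO\comp X}(\sO\comp Y)$ is, in $\alg(\sO)$, the coequalizer of the two maps $\sO\comp X\to A\coprod(\sO\comp Y)$ induced by $f\colon\sO\comp X\to A$ and by $\id\comp i\colon\sO\comp X\to\sO\comp Y$. Crucially, this coequalizer is \emph{not} reflexive, so its underlying object cannot simply be read off in $\calmc$ through Proposition \ref{algebra-bicomplete}; the filtration \eqref{aoycolim} is precisely the device that computes it, and this is what must be produced.

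First I would build a cone $\{A_t\to P\}_{t\geq0}$ and hence a comparison map $\Phi\colon\colim_t A_t\to P$. The canonical quotient $\pi\colon A\coprod(\sO\comp Y)\to P$, together with the decomposition of Remark \ref{lower-left-corner}, furnishes for each $t$ a map $\sO_A\singledbrtc\tensorover{\Sigma_t}Y^{\otimes t}\to P_d$, namely the inclusion of the $t$-th summand followed by $\pi$. Setting $A_0=A\xrightarrow{j}P$, I would verify inductively that this map, precomposed with $\id\tensorover{\Sigma_t}i^{\boxprod t}$, agrees with $f^{t-1}_*$ followed by $(A_{t-1})_d\to P_d$ on $\sO_A\singledbrtc\tensorover{\Sigma_t}Q^t_{t-1}$; the universal property of the pushout \eqref{one-colored-jt-pushout} then extends $A_{t-1}\to P$ over $j_t$ to $A_t\to P$, and passing to the colimit produces $\Phi$.

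It then remains to show that $\Phi$ is an isomorphism, which I would do by comparing two natural filtrations: the one by the index $t$ on $\colim_t A_t$, and the one by $Y$-degree on the quotient $\pi$ of $A\coprod(\sO\comp Y)$. The mechanism is that the two maps out of $\sO\comp X$ defining $P$ identify a summand carrying a tensor factor in the image of $i$ with a summand of strictly smaller $Y$-degree, after that factor is absorbed into $A$ through the operadic action; the inductive pushout \eqref{inductive-q-one-colored} defining $Q^t_q$ encodes exactly the combinatorics of ``$q$ factors drawn from $Y$ and $t-q$ from $X$,'' with the residual $\Sigma_t$ permuting the factors, so that the $t$-th layer of $P$ is obtained from the $(t-1)$-st by attaching $\sO_A\singledbrtc\tensorover{\Sigma_t}Y^{\otimes t}$ along its boundary $\sO_A\singledbrtc\tensorover{\Sigma_t}Q^t_{t-1}$.

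The step I expect to be the main obstacle is precisely this layer-by-layer identification: verifying that the attaching maps descend to the $\Sigma_t$-coinvariants, and that no identifications occur \emph{across} levels, so that the two filtrations coincide. Two features make this tractable. Because $i$ is concentrated at the single color $c$, every relevant orbit is $[tc]$ and $\sigmabrtc\cong\Sigma_t$, so the colored bookkeeping collapses to the symmetric-group bookkeeping of \cite{harper-jpaa} (7.12) and \cite{em06}, which is what lets the colored statement take the one-colored form \eqref{one-colored-jt-pushout}. Moreover, the definition \eqref{oadc} of $\sO_A$ as a coequalizer that already absorbs the $\sO$-action on $A$ is exactly what makes $f^{t-1}_*$ well defined on coinvariants; the associativity and equivariance of the operadic structure needed for the descent are precisely those already packaged into Proposition \ref{aplusoofy}. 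The residual verification is then a careful but routine diagram chase identifying successive subquotients.
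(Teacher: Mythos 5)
Your first half is sound and matches the paper's mechanics: using Proposition \ref{aplusoofy} and Remark \ref{lower-left-corner} to decompose $A \coprod (\sO \comp Y)$ colorwise, and then inductively extending $A_{t-1} \to P$ over the pushouts \eqref{one-colored-jt-pushout} to get a comparison map between $\colim_t A_t$ and the pushout. But there is a genuine gap in how you finish, and it stems from a false premise. You assert that the relevant coequalizer is not reflexive and so cannot be computed colorwise via Proposition \ref{algebra-bicomplete}. While the naive presentation $\sO \comp X \rightrightarrows A \coprod (\sO \comp Y)$ is indeed not reflexive, the standard trick (and the one the paper uses) is to present the pushout as the coequalizer of
\[
f_*, i_* : A \coprod (\sO \comp X) \coprod (\sO \comp Y) \rightrightarrows A \coprod (\sO \comp Y),
\]
which \emph{is} reflexive: the inclusion of the summands $A$ and $\sO \comp Y$ is a common section. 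Hence the underlying $\fC$-colored object of the pushout \emph{is} created colorwise, and the whole problem reduces to verifying that $\colim_t A_t$, equipped with the map $\pi$ from $A \coprod (\sO \comp Y)$, satisfies the universal property of this coequalizer in $\calmc$ --- a finite diagram chase through the pushout squares \eqref{one-colored-jt-pushout}.

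Having discarded that tool, you instead propose to prove your comparison map $\Phi$ is an isomorphism by ``comparing two filtrations'' and checking that ``no identifications occur across levels'' of successive subquotients. This is not a proof in the stated generality: $\calm$ is only assumed to be a bicomplete symmetric monoidal closed category, so there is no usable notion of subquotient, layer, or ``degree filtration on the quotient $P$'' with which to run that argument, and an abstract colimit need not be detected by its associated graded in any sense. (Such reasoning can be made to work in spectra or chain complexes, but not here.) To close the gap you should either verify the universal property of the reflexive coequalizer directly, as above, or construct an explicit inverse to $\Phi$; the former is what the paper does, and the inductive construction of the maps $\psi_t$ out of $\colim_t A_t$ is exactly the ``careful but routine diagram chase'' you allude to, made precise.
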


\begin{proof}
The plan is to show that the underlying $\fC$-colored object of the pushout $A \coprod_{\sO \comp X} (\sO \comp Y)$ can be computed as the sequential colimit in \eqref{aoycolim}.  There are several steps.
\begin{enumerate}
\item
Note that the  pushout $ A \coprod_{\sO \circ X} (\sO \circ Y) \in \alg(\sO)$ is isomorphic to the reflexive coequalizer
\begin{equation}
\label{pushoutcoequalizer}
\colim\left(
\nicexy{
A \coprod (\sO \circ X) \coprod (\sO \circ Y) 
\ar@<3pt>[r]^-{f_*} 
\ar@<-3pt>[r]_-{i_*} 
&
A \coprod (\sO \circ Y)
\ar@/_2pc/[l]
}\right).
\end{equation}
By Proposition \ref{algebra-bicomplete} this reflexive coequalizer can be computed color-wise in $\calm$ .  This reflexive coequalizer in $\calmc$ is characterized by the following universal properties.
\begin{enumerate}
\item
It receives a map from $A \coprod (\sO \circ Y)$ in $\calmc$.
\item
The pre-compositions of this map with the maps $f_*$ and $i_*$ are equal.
\item
It is initial with respect to the above properties.
\end{enumerate}
We will show that the sequential colimit in \eqref{aoycolim} has these universal properties.
\item
Proposition \ref{aplusoofy} says that the underlying $\fC$-colored object of $A \coprod (\sO \circ Y) \in \alg(\sO)$ is naturally isomorphic to a sequential colimit
\begin{equation}
\label{aoycolimitb}
A \coprod (\sO \circ Y) 
\cong 
\colim\left(
\nicexy{
B_0 \ar[r]^-{l_1} & B_1 \ar[r]^-{l_2} & B_2 \ar[r]^-{l_3} & \cdots
}
\right)
\end{equation}
in $\calmc$ such that the following statements hold.
\begin{itemize}
\item
$B_0 = A$ (by \eqref{o-sub-a-coequal} and \eqref{algebra=coequalizer}).
\item
For each color $d \in \fC$ and $t \geq 1$, there is a pushout in $\calm$:
\begin{equation}
\label{btispushout}
\nicexy{
\varnothing \ar[d] \ar[r] 
& 
(B_{t-1})_d \ar[d]^-{l_t}
\\
\sO_A \singledbrtc
\tensorover{\Sigma_t} 
Y^{\otimes t}
\ar[r]_-{\zeta_{t}} 
& (B_t)_d.
}
\end{equation}
\end{itemize}
The reason is that by Proposition \ref{aplusoofy} and Remark \ref{lower-left-corner}, for each color $d \in \fC$, we have
\[
\left[A \coprod (\sO \circ Y)\right]_d 
\cong
\coprod_{t \geq 0}  
\left[\sO_A \singledbrtc
\tensorover{\Sigma_t} 
Y^{\otimes t}\right]
= \left(\sO_A \comp Y\right)_d 
\]
Instead of writing it as a coproduct over all $t \geq 0$, we may also write it using the pushouts over $\varnothing$ as above.

Note that the lower left corners in the pushout squares \eqref{one-colored-jt-pushout} and \eqref{btispushout} are the same, namely,
\begin{equation}
\label{lowerleftcorner}
\sO_A \singledbrtc
\tensorover{\Sigma_t} 
Y^{\otimes t}
\end{equation}
Furthermore, there is a compatible sequence of maps from the pushout square \eqref{btispushout} to the pushout square \eqref{one-colored-jt-pushout} for $t \geq 1$ that is the identity map in the lower left corners \eqref{lowerleftcorner}.  This determines a map
\begin{equation}
\label{mapfromaplusoy}
\nicexy{
A \coprod (\sO \circ Y) \ar[r]^-{\pi} 
& \colim_k A_k \in \calmc.
}
\end{equation}
\item
We want to check that the sequential colimit $\colim_k A_k$ \eqref{aoycolim} in $\calmc$ has the universal properties of the reflexive coequalizer \eqref{pushoutcoequalizer} when computed in $\calmc$.
\begin{enumerate}
\item
The map from $A \coprod (\sO \circ Y)$ is $\pi$ \eqref{mapfromaplusoy}.
\item
The equality $\pi f_* = \pi i_*$ follows from the pushout definition \eqref{one-colored-jt-pushout} of $A_t$ in each color.
\item
To see that $(\colim_k A_k, \pi)$ is initial with respect to the above two properties, suppose given a map
\[
\nicexy{
A \coprod (\sO \circ Y) \cong \colim_k B_k \ar[r]^-{\varphi} 
& W \in \calmc
}\]
such that
\begin{equation}
\label{phifstar}
\varphi f_* = \varphi i_*.
\end{equation}
We want to show that $\varphi$ factors through $\pi$ uniquely, i.e., that there is a unique map $\psi : \colim_k A_k \to W$ as in 
\[
\nicexy{
A \coprod (\sO \circ Y) \cong \colim B_k 
\ar[d]_-{\varphi} \ar[r]^-{\pi} 
& \colim A_k \ar[dl]^-{\psi}\\
W&
}\]
such that $\varphi = \psi \pi$.  Let $\varphi_k : B_k \to W$ be the restriction of $\varphi$ to $B_k$.  To define the map $\psi$, it suffices to define a compatible sequence of maps $\psi_k : A_k \to W$ such that the diagram
\begin{equation}
\label{phikpsik}
\nicexy@C+10pt{
B_k \ar[d]_-{\varphi_k} \ar[r]^-{\pi_k} 
& A_k \ar[dl]^-{\psi_k}
\\
W&
}
\end{equation}
commutes for each $k \geq 0$.  Since $B_0 = A = A_0$, we are forced to define $\psi_0 = \varphi_0 : A_0 \to W$.  

Inductively, suppose we have defined compatible maps $\psi_k$ for $k < t$.  To define $\psi_t$, it is enough to define it in the typical $d$-colored entry.  The solid-arrow diagram
\[
\nicexy@C+10pt{
\sO_A \singledbrtc
\tensorover{\Sigma_t} 
Q^{t}_{t-1}
\ar[d]_-{\id \tensorover{\Sigma_t} i^{\boxprod t}} 
\ar[r]^-{f^{t-1}_*} 
& 
(A_{t-1})_d \ar[d]^-{j_t} 
\ar@/^2pc/[ddr]^-{\psi_{t-1}} &
\\
\sO_A \singledbrtc
\tensorover{\Sigma_t} 
Y^{\otimes t}
\ar[r]|-{\xi_{t}} \ar[d]_-{\zeta_t}
& 
(A_t)_d \ar@{.>}[dr]|-{\psi_t} &
\\
(B_t)_d \ar[rr]^-{\varphi_t} \ar[ur]|-{\pi_t}
&& W_d
}\]
in $\calm$ is commutative by \eqref{phifstar}, and \eqref{phikpsik}.  By the universal property of the pushout, there is a unique induced map
\[
\psi_t : (A_t)_d \to W_d
\]
such that
\begin{equation}
\label{psit}
\psi_{t-1} = \psi_t j_t \andspace
\varphi_t \zeta_t = \psi_t \xi_t.
\end{equation}
To see that
\[
\varphi_t = \psi_t \pi_t : (B_t)_d \to W_d,
\]
note that there is an  isomorphism
\[
(B_{t})_d 
\cong
(B_{t-1})_d \coprod 
\sO_A \singledbrtc
\tensorover{\Sigma_t} 
Y^{\otimes t}
\]
for each $t \geq 1$.  The restrictions of $\varphi_t$ and $\psi_t \pi_t$ to $(B_{t-1})_d$ coincide by the inductive construction of $\psi_t$.  So it is enough to see that their pre-compositions with $\zeta_t$ coincide as well.  This holds by the second equality in \eqref{psit} and $\xi_t = \pi_t \zeta_t$.  This defines the map $\psi$.

By construction we have $\varphi = \psi \pi$.  The uniqueness of $\psi$ follows from the pushout definition of the $A_t$.
\end{enumerate}
\end{enumerate}
\end{proof}

\section{More Properties of $\sO_A$}
\label{sec:O_A}

For now $(\calm,\otimes,I,\Hom)$ is still a symmetric monoidal closed category with all small limits and colimits.  This section contains some technical results that we will need to equip the category of algebras over a colored operad with a model structure or at least a semi-model structure.

\subsection{Recovering $\sO$ and $A$}

Recall $\sO_A \in \symseqcm$ for an $\sO$-algebra $A$ in Definition \ref{oaalgebra}.

\begin{proposition}
\label{o-sub-empty}
Suppose $\sO$ is a $\fC$-colored operad, and $\varnothing$ is the initial $\sO$-algebra.  Then there is an isomorphism
\begin{equation}
\label{oemptyiso}
\sO_{\varnothing} \cong \sO
\end{equation}
in $\symseqcm$.
\end{proposition}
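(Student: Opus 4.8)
The plan is to evaluate the reflexive coequalizer \eqref{o-sub-a-coequal} defining $\sO_{\varnothing}$ directly, exploiting that the initial algebra is free, and to show that the coequalizer collapses onto a single summand equal to $\sO$. Recall from Example \ref{ex:free-algebra} that the initial $\sO$-algebra is $\emptyo = \sO \comp \varnothing$, with colored entries $(\emptyo)_a = \sO\singledempty$ by \eqref{initial-o-algebra}, and whose structure map $\sO \comp \emptyo \to \emptyo$ is the operadic composition $\gamma$ of $\sO$ evaluated on the initial colored object $\varnothing$. Fix $d \in \fC$ and an orbit $[\uc] \in \pofc$. I will produce mutually inverse maps between $\sO_{\varnothing}\singledbrc$ and $\sO\singledbrc$ in $\calm^{\sigmabrcop}$ and check that they are natural in $([\uc];d)$, so that they assemble into the desired isomorphism in $\symseqcm$.

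First I would define $\iota \colon \sO\singledbrc \to \sO_{\varnothing}\singledbrc$ as the inclusion of the summand indexed by the empty orbit $[\ua] = [\varnothing]$ in the lower object of \eqref{o-sub-a-coequal}, followed by the coequalizer projection $p$. This summand is canonically $\sO\singledbrc$ because $[\varnothing,\uc] = [\uc]$, the groupoid $\sigmabra$ is trivial, and $(\emptyo)_{\varnothing} \cong I$ by the convention $A^{\otimes 0} = I$. In the other direction, define
\[
\Phi \colon \coprod_{[\ua] \in \pofc} \sO\singledbrabrc \tensorover{\sigmabra} (\emptyo)_{\smallbra} \to \sO\singledbrc
\]
on the $[\ua]$-summand by the operadic composition $\gamma$ that inserts the nullary operations comprising $(\emptyo)_{\smallbra}$, namely the arity-zero entries $(\emptyo)_{a_i}$, into the $\ua$-labelled inputs of $\sO\singledbrabrc$, while acting by the identity on the $\uc$-labelled inputs. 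The equivariance and unit axioms guarantee that $\Phi$ lands in $\sO\singledbrc$ as a $\sigmabrc$-diagram.

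The next step is to show that $\Phi$ coequalizes $d_0$ and $d_1$, hence factors uniquely as $\Phi = \overline{\Phi} \comp p$. This is exactly the associativity axiom for $\gamma$: along $d_0$ one first inserts the $\sO$-operations arising from $\sO \comp \emptyo$ and then the remaining nullaries, whereas along $d_1$ one first contracts those operations against the nullaries via the action (itself $\gamma$ on $\varnothing$) and then inserts; associativity makes the two iterated insertions agree. That $\overline{\Phi}\,\iota = \id$ is immediate from the unit axiom, since on the empty-orbit summand $\Phi$ inserts nothing. The one genuinely substantive step, and the main obstacle, is the reverse identity $\iota\,\overline{\Phi} = \id$, i.e. that the coequalizer relation absorbs every summand with $[\ua] \ne [\varnothing]$ into the empty one. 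Since $p$ is epi it suffices to prove $p = \iota\,\Phi$. Given an element of the $[\ua]$-summand of the lower object, I would lift it to the upper object of \eqref{o-sub-a-coequal} through the summand of $(\sO \comp \emptyo)_{\smallbra}$ on which the inner profile is empty; applying $d_1$ returns the original element, since the action is the identity on that part, while applying $d_0$ inserts the chosen nullaries and lands in the empty-orbit summand as precisely $\iota\,\Phi$ of the element. The coequalizer relation therefore forces $p = \iota\,\Phi = \iota\,\overline{\Phi}\,p$, giving $\iota\,\overline{\Phi} = \id$. Finally, both $\iota$ and $\overline{\Phi}$ are built from the structure maps of $\sO$ and are natural in the indexing data $([\uc];d)$, so they assemble into an isomorphism $\sO_{\varnothing} \cong \sO$ in $\symseqcm$, as claimed.
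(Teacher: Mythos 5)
Your proof is correct, but it takes a genuinely different route from the paper's. The paper deduces the isomorphism in two lines from Proposition \ref{aplusoofy}: taking $A = \varnothing$ there, the coproduct $\varnothing \coprod (\sO \comp Y)$ is just $\sO \comp Y$, so \eqref{aoyd} exhibits $(\sO \comp Y)_d$ as $(\sO_{\varnothing} \comp Y)_d$ naturally in $Y$; comparing with the defining formula \eqref{o-comp-a-d} and letting $Y$ range over all of $\calmc$ forces $\sO_{\varnothing} \cong \sO$. You instead compute the defining reflexive coequalizer \eqref{o-sub-a-coequal} directly, showing it collapses onto the empty-orbit summand by exhibiting mutually inverse maps built from the operad structure. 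Your key step --- lifting each $[\ua]$-summand through the ``inner profile empty'' part of $(\sO \comp \emptyo)_{\smallbra}$, so that $d_1$ restricts to the identity while $d_0$ lands in the $[\varnothing]$-summand as $\iota\,\Phi$ --- is sound; note that your section of $d_1$ differs from the common section of the reflexive pair (which lifts through the operad units rather than through the arity-zero summand), but that is harmless, since any map with those two properties yields $p = \iota\,\Phi$. What your approach buys is self-containedness: it avoids both Proposition \ref{aplusoofy} and the mild ``holds for all $Y$, hence the coefficients agree'' extraction that the paper leaves implicit. What it costs is length, and the element-level manipulations should be understood as shorthand for diagrammatic constructions via the universal properties of the coproducts and the $\tensorover{\sigmabra}$ coinvariants, which is where the real (routine) work of your verification lives in a general $\calm$.
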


\begin{proof}
Proposition \ref{aplusoofy} with $A = \varnothing$ gives the isomorphism
\[
(\sO \circ Y)_d 
\cong
\coprod_{[\ub] \in \sigmaofc}  
\left[\sO_{\varnothing} \singledbrb
\tensorover{\sigmabrb} 
Y_{\smallbrb}\right].
\]
Since this holds for all $Y \in \calmc$, the formula \eqref{o-comp-a-d} for $(\sO \comp Y)_d$ implies the desired isomorphism.
\end{proof}

Next we observe that we can also recover $A$ from $\sO_A$ by taking the $0$-components.

\begin{proposition}
\label{osuba-empty}
Suppose $\sO$ is a $\fC$-colored operad, $A \in \alg(\sO)$, and $d \in \fC$.  Then there is a natural isomorphism
\[
\sO_A\singledempty \cong A_d
\]
in $\calm$.
\end{proposition}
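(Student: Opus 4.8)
The plan is to specialize the defining reflexive coequalizer \eqref{o-sub-a-coequal} of $\sO_A\singledbrc$ to the empty orbit $[\uc] = [\varnothing]$ and to recognize the result as the color-$d$ entry of the standard presentation of $A$ as a reflexive coequalizer of free $\sO$-algebras. First I would observe that $\Sigma_{[\varnothing]}$ is the trivial one-object groupoid (its only object is the empty profile and $\Sigma_0$ is trivial), so that $\calm^{\Sigma_{[\varnothing]}^{\smallop} \times \{d\}} \cong \calm$ and $\sO_A\singledempty$ is genuinely an object of $\calm$. Under this identification the restriction diagram $\sO\singledbrabrc$ of \eqref{xdac} specializes, for $[\uc]=[\varnothing]$, to $\sO\singledbra \in \calm^{\sigmabraop \times \{d\}}$, since adjoining the empty profile to $\ua$ leaves the input profile unchanged; concretely $\sO\binom{d}{\ua',\varnothing} = \sO\binom{d}{\ua'}$ for each $\ua' \in [\ua]$.

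Consequently the lower and upper objects of \eqref{o-sub-a-coequal} become $\coprod_{[\ua]} \sO\singledbra \tensorover{\sigmabra} A_{\smallbra}$ and $\coprod_{[\ua]} \sO\singledbra \tensorover{\sigmabra} (\sO \circ A)_{\smallbra}$, respectively. By formula \eqref{o-comp-a-d} the first is exactly $(\sO \circ A)_d$, and applying \eqref{o-comp-a-d} to the $\fC$-colored object $\sO \circ A$ shows the second is $(\sO \circ \sO \circ A)_d$. Next I would verify that the two face maps match up: $d_0$, induced by the operadic composition $\sO \circ \sO \to \sO$, is the color-$d$ entry of the map $\sO \circ \sO \circ A \to \sO \circ A$ coming from the monad multiplication, while $d_1$, induced by the $\sO$-algebra action $\sO \circ A \to A$, is the color-$d$ entry of $\id_{\sO} \circ (\text{action})$. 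These are precisely the two face maps of the monadic bar resolution of $A$, and the common section likewise agrees with the one induced by $\scrI \circ A \to \sO \circ A$.

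It then follows that $\sO_A\singledempty$ is the color-$d$ entry of the reflexive coequalizer of $\sO \circ \sO \circ A \rightrightarrows \sO \circ A$. By \eqref{algebra=coequalizer} this coequalizer is isomorphic to $A$ in $\alg(\sO)$, and by Proposition \ref{algebra-bicomplete} it is created color-wise in $\calm$; evaluating at the color $d$ yields $\sO_A\singledempty \cong A_d$. Naturality in $A$ is immediate, as every construction involved is functorial in $A$. The one point demanding care, and the only genuine obstacle, is the bookkeeping identifying the restricted diagram $\sO\singledbrabrc$ at $[\uc]=[\varnothing]$ with $\sO\singledbra$ together with the corresponding matching of $d_0,d_1$ with the two face maps of the bar construction; once that identification is in place, the isomorphism drops out of the monadicity statement \eqref{algebra=coequalizer}.
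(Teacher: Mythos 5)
Your proposal is correct and follows essentially the same route as the paper: specialize the defining coequalizer \eqref{o-sub-a-coequal} to $[\uc]=[\varnothing]$, identify the two objects with $(\sO\circ A)_d$ and $(\sO\circ\sO\circ A)_d$ via \eqref{o-comp-a-d}, and invoke \eqref{algebra=coequalizer} together with Proposition \ref{algebra-bicomplete}. The extra bookkeeping you supply (checking that $d_0$, $d_1$, and the section match the bar resolution) is exactly the content the paper leaves implicit.
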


\begin{proof}
By Definition \ref{oaalgebra} $\sO_A\singledempty$ is the reflexive coequalizer of the diagram
\begin{equation}
\label{adreflexive}
\nicexy{
(\sO \comp \sO \comp A)_d =
\coprod\limits_{[\ua] \in \sigmaofc} 
\sO\singledbra
\tensorover{\sigmabra} 
(\sO \circ A)_{\smallbra}
\ar@<-3pt>[d]_-{d_0} 
\ar@<3pt>[d]^-{d_1}
\\
(\sO \comp A)_d = 
\coprod\limits_{[\ua] \in \sigmaofc} 
\sO\singledbra 
\tensorover{\sigmabra} A_{\smallbra}
\ar@<-1pc>@/_2pc/[u]
}
\end{equation}
But as mentioned in \eqref{algebra=coequalizer}, $A \in \alg(\sO)$ is naturally  isomorphism to the reflexive coequalizer of the diagram
\bigskip
\[
\nicexy{
\sO \circ \sO \circ A 
\ar@<3pt>[r]^-{d_0}
\ar@<-3pt>[r]_-{d_1} & 
\sO \circ A \ar@/_2pc/[l]
}\]
in $\alg(\sO)$, which can be computed color-wise in $\calm$ by Proposition \ref{algebra-bicomplete}.  So the reflexive coequalizer of \eqref{adreflexive} is isomorphic to $A_d$.
\end{proof}

\subsection{Coproduct with Free Algebras}

The next observation is the colored analogue of \cite{harper-gnt} (5.31) that we will need to use later.

\begin{proposition}
\label{colored-hh-5.31}
Suppose $\sO$ is a $\fC$-colored operad, $A \in \alg(\sO)$, $Y \in \calmc$, $d \in \fC$, and $[\uc] \in \sigmaofc$.  Consider the coproduct $A \coprod (\sO \comp Y) \in \alg(\sO)$ and the object
\[
\sO_{A \coprod (\sO \comp Y)} \in \symseqcm.
\]
Then there is a natural isomorphism
\begin{equation}
\label{o-sub-a-plus-oy}
\sO_{A \coprod (\sO \comp Y)} \singledbrc 
\cong
\coprod_{[\ua] \in \sigmaofc}
\left[
\sO_A\singledbrabrc \tensorover{\sigmabra} Y_{[\ua]}
\right]
\end{equation}
in $\calm^{\sigmabrcop \times \{d\}}$.
\end{proposition}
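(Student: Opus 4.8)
The plan is to pin down the colored symmetric sequence $\sO_{A \coprod (\sO \comp Y)}$ by probing it against an arbitrary $Z \in \calmc$, exactly as in the proof of Proposition \ref{o-sub-empty}. Write $B = A \coprod (\sO \comp Y)$ for brevity. For any $\sO$-algebra $B$ and any $Z$, Proposition \ref{aplusoofy} expresses the $d$-colored entry $[B \coprod (\sO \comp Z)]_d$ as $(\sO_B \comp Z)_d = \coprod_{[\uc]} \sO_B\singledbrc \tensorover{\sigmabrc} Z_{\smallbrc}$, and by the component formula \eqref{o-comp-a-d} this data, as $Z$ varies over $\calmc$, determines each component $\sO_B\singledbrc$ together with its $\sigmabrcop \times \{d\}$-equivariance. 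So it suffices to compute $[B \coprod (\sO \comp Z)]_d$ and match the $[\uc]$-indexed summands against the claimed right-hand side of \eqref{o-sub-a-plus-oy}.

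First I would rearrange the iterated coproduct. Since $\sO \comp -$ is a left adjoint it preserves coproducts, so $(\sO \comp Y) \coprod (\sO \comp Z) \cong \sO \comp (Y \coprod Z)$; together with the associativity and symmetry of the coproduct in $\alg(\sO)$ this gives a natural isomorphism $B \coprod (\sO \comp Z) \cong A \coprod \bigl(\sO \comp (Y \coprod Z)\bigr)$. Applying Proposition \ref{aplusoofy} to the algebra $A$ and the object $Y \coprod Z$ then yields
\[
\Bigl[B \coprod (\sO \comp Z)\Bigr]_d \cong \bigl(\sO_A \comp (Y \coprod Z)\bigr)_d = \coprod_{[\ub] \in \sigmaofc} \sO_A\singledbrb \tensorover{\sigmabrb} (Y \coprod Z)_{\smallbrb}.
\]
Next I would separate the retained inputs from the new ones. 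Over the orbit $[\ub]$, the object $(Y \coprod Z)_{\smallbrb}$ decomposes into summands according to which tensor factors come from $Y$ and which come from $Z$; passing to $\sigmabrb$-coinvariants, this is precisely the combinatorial splitting already carried out in \eqref{oaoyd}, with $\sO_A$, $Y$, $Z$ here playing the roles of $\sO$, $A$, $Y$ there. Writing the $Y$-part as an orbit $[\ua]$ and the $Z$-part as an orbit $[\uc]$, so that $[\ub] = [(\ua,\uc)]$, this reads
\[
\coprod_{[\ub]} \sO_A\singledbrb \tensorover{\sigmabrb} (Y \coprod Z)_{\smallbrb} \cong \coprod_{[\uc] \in \sigmaofc} \Bigl( \coprod_{[\ua] \in \sigmaofc} \sO_A\singledbrabrc \tensorover{\sigmabra} Y_{\smallbra} \Bigr) \tensorover{\sigmabrc} Z_{\smallbrc}.
\]
Comparing with $(\sO_B \comp Z)_d = \coprod_{[\uc]} \sO_B\singledbrc \tensorover{\sigmabrc} Z_{\smallbrc}$ and reading off the $[\uc]$-summand gives the isomorphism \eqref{o-sub-a-plus-oy}.

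The step that needs the most care is this last extraction: obtaining the $[\uc]$-component with its full $\sigmabrcop \times \{d\}$-equivariance, rather than merely an isomorphism of underlying objects of $\calm$. I would handle it just as in \eqref{oaoyd} and Proposition \ref{o-sub-empty}, verifying that the decomposition of $(Y \coprod Z)_{\smallbrb}$ and the re-indexing $[\ub] = [(\ua,\uc)]$ are natural in $Z$ and compatible with the symmetric-group actions, so that the matched summands agree as diagrams in $\calm^{\sigmabrcop \times \{d\}}$. Naturality of every isomorphism in the chain — both applications of Proposition \ref{aplusoofy} and the coproduct rearrangement — is what upgrades this component-wise comparison to the asserted natural isomorphism of $\fC$-colored symmetric sequences.
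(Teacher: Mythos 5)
Your proof is correct and follows essentially the same route as the paper's: both compute the $d$-colored entry of $A \coprod (\sO \comp Y) \coprod (\sO \comp Z)$ in two ways — once via Proposition \ref{aplusoofy} applied to $A \coprod (\sO \comp Y)$ and $Z$, and once via the rearrangement $A \coprod \bigl(\sO \comp (Y \coprod Z)\bigr)$ followed by the splitting of $(Y \coprod Z)_{\smallbrb}$ as in \eqref{oaoyd} — and then compare the $[\uc]$-indexed summands for all $Z$. Your added attention to the $\sigmabrcop \times \{d\}$-equivariance and naturality of the comparison makes explicit what the paper leaves implicit.
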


\begin{proof}
Suppose $Z \in \calmc$.  We will compute each entry of
\[
\left[A \coprod (\sO \circ Y) \coprod (\sO \comp Z)\right] \in \alg(\sO)
\]
in two different ways and compare them.  Using \eqref{aoyd} with $A$ and $Y$ replaced by $A \coprod (\sO \comp Y)$ and $Z$, respectively, there is an isomorphism
\begin{equation}
\label{aoyozd}
\left[A \coprod (\sO \circ Y) \coprod (\sO \comp Z)\right]_d 
\cong
\coprod_{[\uc] \in \sigmaofc}  
\left[\sO_{A\coprod (\sO\comp Y)} \singledbrc\right]
\tensorover{\sigmabrc} 
Z_{\smallbrc}.
\end{equation}
On the other hand, there are isomorphisms:
\begin{equation}
\label{aoyzd}
\begin{split}
&\left[A \coprod (\sO \circ Y) \coprod (\sO \comp Z)\right]_d
\\
&\cong \left[A \coprod \sO \comp (Y \coprod Z)\right]_d
\\
&\cong
\coprod_{[\ub] \in \sigmaofc}  
\left[\sO_{A} \singledbrb
\tensorover{\sigmabrb} 
(Y \coprod Z)_{\smallbrb}\right]
\quad \text{(by \eqref{aoyd})} 
\\
&\cong
\coprod_{[\uc] \in \sigmaofc}
\left[
\coprod_{[\ua] \in \sigmaofc}
\sO_A \singledbrabrc \tensorover{\sigmabra} Y_{\smallbra}\right]
\tensorover{\sigmabrc} Z_{\smallbrc}.
\end{split}
\end{equation}
Since \eqref{aoyozd} and \eqref{aoyzd} hold for all $Z \in \calmc$, the desired isomorphism \eqref{o-sub-a-plus-oy} follows.
\end{proof}

\begin{remark}
\label{oaplusoy-one-colored-y}
If $Y \in \calmc$ is concentrated at a single color $b \in \fC$, then \eqref{o-sub-a-plus-oy} becomes
\[
\sO_{A \coprod (\sO \comp Y)} \singledbrc 
\cong
\coprod_{t \geq 0}
\left[
\sO_A \singledbrtbbrc \tensorover{\Sigma_t} Y^{\otimes t}
\right]
\]
where $tb = (b,\ldots,b)$ with $t$ copies of $b$.
\end{remark}

\begin{corollary}
\label{colored-hh-5.31b}
Suppose $\sO$ is a $\fC$-colored operad, $Y \in \calmc$, $d \in \fC$, and $[\uc] \in \sigmaofc$.  Then there is a natural isomorphism
\begin{equation}
\label{o-sub-oy}
\sO_{\sO \comp Y} \singledbrc 
\cong
\coprod_{[\ua] \in \sigmaofc}
\left[
\sO\singledbrabrc \tensorover{\sigmabra} Y_{[\ua]}
\right]
\end{equation}
in $\calm^{\sigmabrcop \times \{d\}}$.
\end{corollary}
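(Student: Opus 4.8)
The plan is to obtain \eqref{o-sub-oy} as the special case $A = \varnothing$ of Proposition \ref{colored-hh-5.31}, where $\varnothing$ denotes the initial $\sO$-algebra. First I would observe that, since $\varnothing$ is initial in $\alg(\sO)$, the coproduct on the left-hand side of \eqref{o-sub-a-plus-oy} simplifies to
\[
\varnothing \coprod (\sO \comp Y) \cong \sO \comp Y \in \alg(\sO).
\]
Thus the instance of Proposition \ref{colored-hh-5.31} with $A = \varnothing$ already produces a natural isomorphism whose left-hand side is exactly $\sO_{\sO \comp Y}\singledbrc$.

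It then remains to rewrite the right-hand side. By Proposition \ref{o-sub-empty} there is an isomorphism $\sO_{\varnothing} \cong \sO$ in $\symseqcm$, so substituting this into the right-hand side of the specialized \eqref{o-sub-a-plus-oy} turns
\[
\coprod_{[\ua] \in \sigmaofc}\left[\sO_{\varnothing}\singledbrabrc \tensorover{\sigmabra} Y_{[\ua]}\right]
\]
into
\[
\coprod_{[\ua] \in \sigmaofc}\left[\sO\singledbrabrc \tensorover{\sigmabra} Y_{[\ua]}\right],
\]
which is precisely the right-hand side of \eqref{o-sub-oy}.

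There is no substantive obstacle here; the corollary is pure bookkeeping once Propositions \ref{colored-hh-5.31} and \ref{o-sub-empty} are in hand. The only point worth checking is that the isomorphism $\sO_{\varnothing} \cong \sO$ respects the $\singledbrabrc$-restriction of Remark \ref{rk:xdac-restriction}, so that it remains an isomorphism after applying $- \tensorover{\sigmabra} Y_{[\ua]}$ and taking the coproduct over the orbits $[\ua]$. This compatibility is immediate from the naturality of the isomorphism in Proposition \ref{o-sub-empty} in the underlying symmetric-sequence data.
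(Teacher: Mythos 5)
Your proposal is correct and matches the paper's own proof exactly: specialize Proposition \ref{colored-hh-5.31} to the initial $\sO$-algebra $A = \varnothing$, use $\varnothing \coprod (\sO \comp Y) \cong \sO \comp Y$, and substitute the isomorphism $\sO_{\varnothing} \cong \sO$ from Proposition \ref{o-sub-empty}. The extra compatibility check you mention is harmless but not something the paper bothers to spell out.
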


\begin{proof}
This follows from the isomorphism \eqref{o-sub-a-plus-oy} with $A = \varnothing$ (the initial $\sO$-algebra) and the isomorphism $\sO \cong \sO_{\varnothing}$ in $\symseqcm$ \eqref{oemptyiso}.
\end{proof}

\subsection{Pushout of a Free Map}

The following observation, which is the colored version of \cite{harper-gnt} (5.7), will be used in the next result.

\begin{lemma}
\label{osubdc}
Suppose $\sO$ is a $\fC$-colored operad, $d \in \fC$, and $[\uc] \in \sigmaofc$.  Then the functor
\[
\sO_{(-)}\singledbrc : \alg(\sO) \to \calm^{\sigmabrcop \times \{d\}}
\]
preserves reflexive coequalizers and filtered colimits.
\end{lemma}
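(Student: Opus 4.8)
The plan is to read off the two preservation properties directly from the defining reflexive coequalizer \eqref{o-sub-a-coequal} for $\sO_A\singledbrc$, reducing everything to the principle that colimits commute with colimits together with one non-formal ingredient. By Proposition \ref{algebra-bicomplete}, reflexive coequalizers and filtered colimits in $\alg(\sO)$ are created by the forgetful functor $\alg(\sO) \to \calmc$, hence computed color-wise in $\calm$; since colimits in the target diagram category $\calm^{\sigmabrcop \times \{d\}}$ are likewise computed objectwise in $\calm$, it suffices to prove preservation for functors $\alg(\sO) \to \calm$ built from coproducts, the coends $\tensorover{\sigmabra}$, the monad $\sO \comp -$, and finite $\otimes$-products of colored entries.

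The one genuinely non-formal input is that the functors $A \mapsto A_{\smallbra}$, sending an algebra to the diagram \eqref{asubbrc} of tensor products $A_{\ua'} = A_{a'_1} \otimes \cdots \otimes A_{a'_n}$ of its entries, preserve reflexive coequalizers and filtered colimits. Both of these are \emph{sifted} colimits, and sifted colimits commute with finite products; since the tensor on $\calm$ preserves colimits in each variable separately (as $\calm$ is closed), it preserves sifted colimits in each variable and hence jointly along the diagonal. Thus each value $A_{\ua'}$, and therefore the whole $\sigmabra$-diagram $A_{\smallbra} \in \calm^{\sigmabra}$, carries a color-wise sifted colimit of algebras to the corresponding colimit. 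The same reasoning applies to $A \mapsto (\sO \comp A)_{\smallbra}$: by \eqref{o-comp-a-d} the functor $\sO \comp -$ is color-wise a coproduct of coends of finite $\otimes$-products, so it is assembled from colimits and finite tensors and preserves sifted colimits, and tensoring the resulting entries together preserves them again.

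With this in hand I would assemble the two functors appearing in \eqref{o-sub-a-coequal},
\[
F_0(A) = \coprod_{[\ua] \in \sigmaofc} \sO\singledbrabrc \tensorover{\sigmabra} A_{\smallbra}
\andspace
F_1(A) = \coprod_{[\ua] \in \sigmaofc} \sO\singledbrabrc \tensorover{\sigmabra} (\sO \comp A)_{\smallbra},
\]
and note that each preserves reflexive coequalizers and filtered colimits. Indeed, for a fixed first argument the operation $\sO\singledbrabrc \tensorover{\sigmabra} (-)$ is a coend (Definition \ref{def:tensorover}), hence a left adjoint in its remaining variable and so preserves all colimits; precomposing with the sifted-colimit-preserving functors of the previous paragraph and then taking the coproduct over $[\ua]$, which commutes with all colimits, shows that $F_0$ and $F_1$ preserve both classes of colimits.

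Finally, by \eqref{o-sub-a-coequal} the functor $\sO_{(-)}\singledbrc$ is the pointwise reflexive coequalizer of the reflexive pair $F_1 \rightrightarrows F_0$. Given a reflexive-coequalizer or filtered diagram $\{A^{(i)}\}$ of $\sO$-algebras with colimit $A$, the object $\sO_A\singledbrc$ is the coequalizer of $F_1(A) \rightrightarrows F_0(A)$; since $F_0$ and $F_1$ preserve this colimit, the pair is $\colim_i F_1(A^{(i)}) \rightrightarrows \colim_i F_0(A^{(i)})$, and because coequalizers and the indexing colimit are both colimits and colimits commute, its coequalizer is $\colim_i \sO_{A^{(i)}}\singledbrc$, as desired. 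The main obstacle is the sifted-colimit input of the second paragraph; everything afterward is the formal fact that colimits commute with colimits.
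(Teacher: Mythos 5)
Your proof is correct and follows essentially the same route as the paper, whose own argument is a one-line citation of Proposition \ref{algebra-bicomplete} (color-wise computation of reflexive coequalizers and filtered colimits in $\alg(\sO)$), the defining coequalizer \eqref{o-sub-a-coequal}, and the formula \eqref{o-comp-a-d}. You have merely made explicit the underlying mechanism --- that finite tensor products preserve sifted colimits and that colimits commute with colimits --- which the paper leaves implicit.
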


\begin{proof}
This follows from Proposition \ref{algebra-bicomplete} (that reflexive coequalizers and filtered colimits in $\alg(\sO)$ can be computed color-wise in $\calm$), the definition \eqref{o-sub-a-coequal} (of $\sO_A\singledbrc$ in terms of a reflexive coequalizer of coproducts of coinvariants over finite connected groupoids of finite tensor products), and the formula \eqref{o-comp-a-d} (of each color of $\sO \comp A$ as a coproduct of coinvariants over finite connected groupoids of finite tensor products).
\end{proof}

The next observation is the colored analogue of \cite{harper-gnt} (5.36) that we will need to use later.

\begin{proposition}
\label{o-ainfinity}
Suppose $\sO$ is a $\fC$-colored operad, $A \in \alg(\sO)$, $i : X \to Y \in \calmc$ is concentrated in one color $b \in \fC$, and
\begin{equation}
\label{o-ainfinity-pushout}
\nicexy{
\sO \circ X \ar[d]_-{\id \circ i} \ar[r]^-{f} 
& A \ar[d]^-{j}
\\
\sO \circ Y \ar[r] 
& A_{\infty}
}
\end{equation}
is a pushout in $\alg(\sO)$.  Suppose $d \in \fC$ and $[\uc] \in \sigmaofc$.  Then the object
\[
\sO_{A_{\infty}} \singledbrc \in \calm^{\sigmabrcop \times \{d\}}
\]
is isomorphic to a countable sequential colimit
\begin{equation}
\label{o-ainfinity-colim}
\colim\left(
\nicexy{
\osubazero \singledbrc \ar[r]^-{j_1}
& \osubaone \singledbrc \ar[r]^-{j_2}
& \osubatwo \singledbrc \ar[r]^-{j_3}
& \cdots}
\right),
\end{equation}
in which:
\begin{itemize}
\item
$\osubazero \singledbrc = \osuba \singledbrc \in \calm^{\sigmabrcop \times \{d\}}$;
\item
$j_t$ for $t \geq 1$ are defined inductively as pushouts
\begin{equation}
\label{osubjt}
\nicexy{
\osuba \singledbrtbbrc \tensorover{\Sigma_t} Q^{t}_{t-1} 
\ar[d]_-{\id \tensorover{\Sigma_t} i^{\boxprod t}} \ar[r]^-{f_*}
& 
\osubatminusone\singledbrc \ar[d]^-{j_t}
\\
\osuba \singledbrtbbrc \tensorover{\Sigma_t} Y^{\otimes t}
\ar[r]^-{\xi_t}
&
\osubat \singledbrc
}
\end{equation}
in $ \calm^{\sigmabrcop \times \{d\}}$, where $tb = (b,\ldots,b)$ with $t$ copies of $b$.
\end{itemize}
\end{proposition}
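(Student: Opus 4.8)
The plan is to run the argument of Proposition \ref{colored_jpaa7.12} essentially verbatim, with the evaluation functor $[-]_d$ used there replaced by the functor $\sO_{(-)}\singledbrc$. The only structural inputs that change are the ones recording how each functor interacts with reflexive coequalizers and with coproducts against free algebras, and both of these have already been established for $\sO_{(-)}\singledbrc$ earlier in the excerpt.

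First I would recall from \eqref{pushoutcoequalizer} that the pushout $A_{\infty}$ is the reflexive coequalizer in $\alg(\sO)$ of the pair
\[
f_*,\ i_* : A\coprod(\sO\circ X)\coprod(\sO\circ Y) \rightrightarrows A\coprod(\sO\circ Y),
\]
with $f_*$ induced by $f$ and $i_*$ by $\id\circ i$. In the proof of Proposition \ref{colored_jpaa7.12} one computes the $d$-colored entry of this coequalizer using that $[-]_d$ preserves reflexive coequalizers (Proposition \ref{algebra-bicomplete}) together with the formula of Proposition \ref{aplusoofy}. Here I would instead apply $\sO_{(-)}\singledbrc$: by Lemma \ref{osubdc} it preserves reflexive coequalizers, so $\sO_{A_{\infty}}\singledbrc$ is the reflexive coequalizer of the image pair in $\calm^{\sigmabrcop\times\{d\}}$. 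Its two terms are then expanded by Remark \ref{oaplusoy-one-colored-y}, which specializes Proposition \ref{colored-hh-5.31} to a free algebra on an object concentrated in a single color, applied to $A\coprod(\sO\circ Y)$ and to $A\coprod(\sO\circ(X\coprod Y))$ (using that $\sO\circ-$ preserves coproducts, and that $X\coprod Y$ is again concentrated in the color $b$):
\[
\coprod_{t\geq 0}\sO_A\singledbrtbbrc\tensorover{\Sigma_t}(X\coprod Y)^{\otimes t}
\ \rightrightarrows\
\coprod_{t\geq 0}\sO_A\singledbrtbbrc\tensorover{\Sigma_t}Y^{\otimes t},
\]
whose target has $t=0$ summand $\sO_A\singledbrc = \osubazero\singledbrc$.

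This is the exact analogue of the coequalizer appearing in the proof of Proposition \ref{colored_jpaa7.12}, with the coefficient $\sO_A\singledbrtc$ there replaced throughout by $\sO_A\singledbrtbbrc$ and the base object $A_d$ replaced by $\sO_A\singledbrc$. Since the $Q$-construction $Q^{t}_{t-1}\to Y^{\otimes t}$ of Definition \ref{def:q-construction} depends only on $i$ and $t$, the interpolation of the coproduct over $t$ and the verification that the resulting sequential colimit \eqref{o-ainfinity-colim} carries the universal property of this coequalizer proceed word for word as there, yielding the pushouts \eqref{osubjt}. The only genuinely new ingredients, and hence the main obstacle, are the two cited facts about $\sO_{(-)}\singledbrc$: that it commutes with the defining reflexive coequalizer (Lemma \ref{osubdc}), and that it converts a coproduct with a free algebra into the $\sO_A$-weighted sum of Proposition \ref{colored-hh-5.31}. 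Once the coequalizer has been rewritten in terms of $\sO_A\singledbrtbbrc$, no new homotopical or combinatorial content appears, and the bookkeeping of the $Q$-filtration may be cited from the proof of Proposition \ref{colored_jpaa7.12} rather than repeated.
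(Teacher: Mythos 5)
Your proposal is correct and matches the paper's own proof essentially step for step: the paper likewise realizes $A_{\infty}$ as the reflexive coequalizer \eqref{pushoutcoequalizer}, applies $\sO_{(-)}\singledbrc$ using Lemma \ref{osubdc}, expands both terms via the decomposition \eqref{o-sub-a-plus-oy} and Remark \ref{oaplusoy-one-colored-y}, and then defers the verification of the universal property of the sequential colimit to the argument of Proposition \ref{colored_jpaa7.12}. No substantive differences.
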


\begin{remark}
It is tempting to use the filtration \eqref{aoycolim} for $A_\infty$ and Lemma \ref{osubdc} to conclude that $\osubainfinity$ is the sequential colimit of the $\sO_{A_t}$.  However, the filtration \eqref{aoycolim} cannot be used this way here because it happens in $\calmc$, not in $\alg(\sO)$.
\end{remark}

\begin{proof}[Proof of Proposition \ref{o-ainfinity}]
The pushout $A_\infty$ is also the reflexive coequalizer
\medskip
\[
\colim\left(
\nicexy{
A \coprod (\sO \comp X) \coprod (\sO \comp Y)
\ar@<3pt>[r]^-{f_*}
\ar@<-3pt>[r]_-{i_*} 
& 
A \coprod (\sO \circ Y) \ar@/_2pc/[l]
}\right)
\]
in $\alg(\sO)$.  By Lemma \ref{osubdc} $\osubainfinity \singledbrc$ is the reflexive coequalizer of the diagram
\medskip
\begin{equation}
\label{ainfinity-coequalizer}
\nicexy{
\sO_{A \coprod (\sO \comp X) \coprod (\sO \comp Y)} \singledbrc
\ar@<3pt>[r]^-{f_*}
\ar@<-3pt>[r]_-{i_*} 
&
\sO_{A \coprod (\sO \circ Y)} \singledbrc \ar@/_2pc/[l]
}
\end{equation}
in $\calm^{\sigmabrcop \times \{d\}}$.  Recall the decomposition \eqref{o-sub-a-plus-oy} (and Remark \ref{oaplusoy-one-colored-y}) for $\sO_{A \coprod (\sO \circ Y)} \singledbrc$:
\[
\sO_{A \coprod (\sO \comp Y)} \singledbrc 
\cong
\coprod_{t \geq 0}
\left[
\sO_A \singledbrtbbrc \tensorover{\Sigma_t} Y^{\otimes t}
\right]
\]
This decomposition also applies to
\[
\sO_{A \coprod (\sO \comp X) \coprod (\sO \comp Y)} \singledbrc
\cong
\sO_{A \coprod \sO \comp (X \coprod Y)} \singledbrc.
\]
Therefore, the reflexive coequalizer $Z$ of \eqref{ainfinity-coequalizer} is characterized by the following universal properties:
\begin{enumerate}
\item
For each $t \geq 0$ there is a map
\[
\nicexy{
\sO_A \singledbrtbbrc \tensorover{\Sigma_t} Y^{\otimes t}
\ar[r]^-{\phi_t}
& Z \in \calm^{\sigmabrcop \times \{d\}},
}\]
where $tb = (b,\ldots,b)$ with $t$ copies of $b$.
\item
For any $s,t \geq 0$ the diagram
\[
\nicexy{
\osuba \smallbinom{d}{[sb];[tb]; [\uc]}
\tensorover{\Sigma_s \times \Sigma_t}
\left[X^{\otimes s} \otimes Y^{\otimes t}\right]
\ar[d]_-{i_*} \ar[r]^-{f_*}
&
\sO_A \singledbrtbbrc \tensorover{\Sigma_t} Y^{\otimes t}
\ar[d]^-{\phi_t}
\\
\osuba \smallbinom{d}{[(s+t)b]; [\uc]} \tensorover{\Sigma_{s+t}} Y^{\otimes (s+t)}
\ar[r]^-{\phi_{s+t}}
& 
Z
}\]
in $\calm^{\sigmabrcop \times \{d\}}$ is commutative, where $\osuba \smallbinom{d}{[sb];[tb]; [\uc]}$ was defined in \eqref{xdabc} for an arbitrary $\fC$-colored symmetric sequence.
\item
$Z$ is initial with respect to the above two properties.
\end{enumerate}
The rest of the proof is about checking that the sequential colimit \eqref{o-ainfinity-colim} has the above universal properties of $Z$.  This argument is very similar to the proof of Proposition \ref{colored_jpaa7.12}, so we will omit the details.
\end{proof}

\subsection{Homotopical Analysis of Pushouts}

Now we assume further that $\calm$ is a monoidal model category in the sense of \cite{ss} (3.1).  This subsumes the assumption that $\calm$ is symmetric monoidal closed with all small limits and colimits.  The extra assumption is that $\calm$ is a model category satisfying the pushout product axiom.  In particular, we are \emph{not} assuming the unit axiom, which is fine as long as we work at the model category level rather than on the level of homotopy categories.

We will need the following fact about diagram categories indexed by groupoids.  It is the groupoid version of \cite{bm06} (2.5.1, second part).

\begin{lemma}
\label{cof-restrict}
Suppose $\sg$ is a non-empty connected small groupoid, and $\iota : \sh \subseteq \sg$ is a non-empty connected sub-groupoid.  Then the restriction functor $\iota^* : \calmg \to \calmh$ takes (trivial) cofibrations to (trivial) cofibrations.  In particular, $\iota^*$ takes cofibrant objects to cofibrant objects.
\end{lemma}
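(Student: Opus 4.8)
The plan is to reduce the statement to a computation about restriction of group actions. Since $\sg$ is a non-empty connected groupoid and $\sh\subseteq\sg$ is a non-empty connected sub-groupoid, I would first fix an object $y\in\sh$ (hence $y\in\sg$) and set $H=\sh(y,y)$ and $G=\sg(y,y)$, so that $H$ is a subgroup of $G$. Connectivity gives equivalences of categories $\calmg\simeq\calm^G$ and $\calmh\simeq\calm^H$, obtained by evaluating a diagram at $y$, where $\calm^G$ (resp. $\calm^H$) denotes the category of objects of $\calm$ equipped with a $G$-action (resp. $H$-action). Because all objects of a connected groupoid are isomorphic, the objectwise (projective) model structure on $\calmg$ corresponds under this equivalence to the model structure on $\calm^G$ whose weak equivalences and fibrations are detected on underlying objects, and similarly for $\sh$. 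Under these equivalences the restriction functor $\iota^*$ is identified with the restriction-of-action functor $\mathrm{Res}^G_H:\calm^G\to\calm^H$. Thus it suffices to prove that $\mathrm{Res}^G_H$ carries (trivial) cofibrations to (trivial) cofibrations.

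The key point is a computation with free objects. The $G$-projective model structure is cofibrantly generated, with generating cofibrations the maps $G\cdot i$ and generating trivial cofibrations the maps $G\cdot j$, where $i$ and $j$ range over the generating (trivial) cofibrations $I$ and $J$ of $\calm$ and $G\cdot(-)=\coprod_G(-)$ carries the regular $G$-action. The crucial observation is that $G$, regarded as a left $H$-set by restricting left multiplication along $H\subseteq G$, is a \emph{free} $H$-set: left translation by a nontrivial element of $H$ has no fixed points, so there is an isomorphism $G\cong\coprod_{H\backslash G}H$ of left $H$-sets, the orbits being the right cosets. Consequently, for any map $i$ in $\calm$,
\[
\mathrm{Res}^G_H(G\cdot i)\cong\coprod_{H\backslash G}(H\cdot i),
\]
a coproduct of generating cofibrations of $\calm^H$, and hence a cofibration; the same computation with $j$ in place of $i$ exhibits $\mathrm{Res}^G_H(G\cdot j)$ as a coproduct of generating trivial cofibrations, hence a trivial cofibration. (Note this is genuine extra information: as a right adjoint, $\iota^*$ is only \emph{a priori} known to preserve fibrations and trivial fibrations.)

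It remains to propagate this from the generators to all (trivial) cofibrations. Colimits in $\calm^G$ and $\calm^H$ are created by the underlying-object functors, and $\mathrm{Res}^G_H$ leaves underlying objects unchanged, so $\mathrm{Res}^G_H$ preserves all colimits, in particular pushouts, transfinite compositions, and retracts. Every cofibration of $\calm^G$ is a retract of a transfinite composite of pushouts of maps $G\cdot i$; applying $\mathrm{Res}^G_H$ turns this into a retract of a transfinite composite of pushouts of the cofibrations $\coprod_{H\backslash G}(H\cdot i)$, which is therefore again a cofibration since cofibrations of $\calm^H$ are closed under these operations. The identical argument with $J$ handles trivial cofibrations. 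Finally, for the last sentence, $\iota^*$ preserves the initial object ($\iota^*\emptyset=\emptyset$), so it sends $\emptyset\to X$ to $\emptyset\to\iota^*X$; thus $X$ cofibrant implies $\iota^*X$ cofibrant. The only non-formal step in the whole argument is the freeness isomorphism $G\cong\coprod_{H\backslash G}H$, which is exactly where the hypotheses that $\sh$ is a sub-groupoid (yielding $H\subseteq G$) and that both groupoids are connected (yielding the reduction to automorphism groups) are used.
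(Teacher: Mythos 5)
Your proof is correct and follows essentially the same route as the paper's: reduce to the generating cofibrations and use the freeness of the $H$-action on $G$ (equivalently, of $\sh(h;h)$ acting on $\sg(h;k)$) to exhibit the restriction of a generating cofibration as a coproduct, indexed by cosets, of generating cofibrations for the subcategory. The only cosmetic difference is that you first pass through the equivalence of a connected groupoid with the one-object groupoid on its automorphism group, whereas the paper runs the identical coset argument directly on the hom-sets of $\sg$ and $\sh$.
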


\begin{proof}
We prove the assertion for cofibrations; the assertion for trivial cofibrations is proved similarly.

The diagram categories $\calmg$ and $\calmh$ are cofibrantly generated \cite{hirschhorn} (11.6.1).  Since cofibrations are closed under retracts and transfinite compositions \cite{hirschhorn} (10.3.4), it suffices to show that $\iota^*$ takes \emph{generating} cofibrations in $\calmg$ to cofibrations in $\calmh$.  A generating cofibration in $\calmg$ is a map of the form
\[
\varphi_g = \left(
\nicexy{
\coprod_{\sg(g;-)} X \ar[r]^-{\coprod i}
& \coprod_{\sg(g;-)} Y}\right)
\]
with $g \in \Ob(\sg)$ and $i : X \to Y \in \calm$ a generating cofibration.  Since $\sg$ is connected, for any two objects $g,g' \in \Ob(\sg)$, the maps $\varphi_g$ and $\varphi_{g'}$ are isomorphic.  So it suffices to show that $\iota^*$ takes \emph{one} $\varphi_g$ to a cofibration in $\calmh$.  Pick an object $h \in \Ob(\sh)$.  We will show that $\iota^*\varphi_h$ is a cofibration in $\calmh$.

The restriction $\iota^* \varphi_h \in \calmh$ has the same form as $\varphi_h$, but it only applies to objects in $\sh$.  Note that $\sh(h;h)$ is a group.  For any object $k \in \Ob(\sh)$, there is an $\sh(h;h)$-action
\[
\nicexy{\sh(h;h) \times \sg(h;k) \ar[r] 
& \sg(h;k)
}\]
on the set $\sg(h;k)$ induced by composition in $\sg$.  As is true for any group action on a set, there are natural isomorphisms
\[
\sg(h;k) 
\cong \coprod_{\mathrm{orbits}} \sh(h;h)
\cong \coprod_{\mathrm{orbits}} \sh(h;k)
\]
of $\sh(h;h)$-sets, where the coproducts are indexed by the set of $\sh(h;h)$-orbits in $\sg(h;k)$.  The isomorphism   
\[
\sh(h;h) \cong \sh(h;k)
\]
of $\sh(h;h)$-sets follows from the assumption that $\sh$ is connected.  Indeed, since $\sh$ is connected, we may pick an isomorphism $f : h \to k \in \sh$.  Then the above isomorphism is given by $g \longmapsto fg$ for $g \in \sh(h;h)$.  Going in the other direction, the isomorphism is given by $g \longmapsto f^{-1}g$ for $g \in \sh(h;k)$.

The cardinality of the set $\sg(h;k)/\sh(h;h)$ of orbits is independent of the object $k \in \Ob(\sh)$ because $\sg$ is connected.  In particular, it has the same cardinality as the set $\sg(h;h)/\sh(h;h)$ of orbits.  It follows that there is an isomorphism
\[
\iota^*\varphi_h 
\cong \coprod_{\sg(h;h)/\sh(h;h)} 
\underbrace{\left[ \nicexy{
\coprod_{\sh(h;-)} X \ar[r]^-{\coprod i}
& \coprod_{\sh(h;-)} Y} \right]}_{\phi_h}
\]
in $\calmh$.  The map $\phi_h$ is a generating cofibration in $\calmh$, so this coproduct is a cofibration in $\calmh$.
\end{proof}

The following observation, which we will use later, is inspired by \cite{harper-gnt} (5.44).  It says that $\sO_{(-)}$ has nice cofibrancy properties.

\begin{lemma}
\label{5.44-1}
Suppose $\sO$ is a $\fC$-colored operad, $A \in \alg(\sO)$, $i : X \to Y \in \calmc$, and
\[
\nicexy{
\sO \circ X \ar[d]_-{\id \circ i} \ar[r]^-{f} 
& A \ar[d]^-{j}
\\
\sO \circ Y \ar[r] 
& A_{\infty}
}\]
is a pushout in $\alg(\sO)$.   Suppose :
\begin{itemize}
\item
$i$ is a (trivial) cofibration in $\calmc$.
\item
The object $\sO_A \in \symseqcm$ is cofibrant, i.e., for all $d \in \fC$ and $[\uc] \in \sigmaofc$, the component component $\sO_A\singledbrc \in \calm^{\sigmabrcop \times \{d\}}$ is cofibrant.
\end{itemize}
Then the map $j_* : \sO_A \to \osubainfinity \in \symseqcm$ is a (trivial) cofibration, i.e., the map
\[
\nicexy{
\sO_A \singledbrc \ar[r]^-{j_*}
& \sO_{A_{\infty}} \singledbrc 
\in \calm^{\sigmabrcop \times \{d\}}
}\]
is a (trivial) cofibration for all $d \in \fC$ and $[\uc] \in \sigmaofc$.  In particular, $\osubainfinity \in \symseqcm$ is cofibrant.
\end{lemma}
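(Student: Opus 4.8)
The plan is to reduce to the case that $i$ is concentrated in a single color and then to feed the filtration of Proposition \ref{o-ainfinity} into the standard equivariant pushout-product machinery. After well-ordering $\fC$, any (trivial) cofibration $i\colon X \to Y$ in $\calmc = \prod_{\fC}\calm$ is a transfinite composition of maps that change one color coordinate at a time, and each such coordinate-change map is a pushout in $\calmc$ of a (trivial) cofibration $\ibar_c$ concentrated in the single color $c$. Since $\sO \circ -$ preserves colimits and pushouts paste, the map $j\colon A \to A_\infty$ factors in $\alg(\sO)$ as a transfinite composition $A = A^{(0)} \to A^{(1)} \to \cdots \to A_\infty$ in which each stage $A^{(\beta)} \to A^{(\beta+1)}$ is a pushout of $\id\circ \ibar_{c_\beta}$ against $A^{(\beta)}$, with $\ibar_{c_\beta}$ concentrated in one color. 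By Lemma \ref{osubdc} the functor $\sO_{(-)}\singledbrc$ preserves this filtered colimit, so $j_*\singledbrc$ is the transfinite composite of the maps $\sO_{A^{(\beta)}}\singledbrc \to \sO_{A^{(\beta+1)}}\singledbrc$. Because (trivial) cofibrations are closed under transfinite composition, and because a cofibration out of a cofibrant object has cofibrant target (so that the hypothesis $\sO_A$ cofibrant propagates through every stage, the limit steps being handled by the same closure property), it suffices to prove the Lemma when $i$ is concentrated in a single color $b$.

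In that case Proposition \ref{o-ainfinity} identifies $\osubainfinity\singledbrc$ with $\colim_t \osubat\singledbrc$, where $\osubazero\singledbrc = \osuba\singledbrc$ and $j_*\singledbrc$ is the transfinite composite of the maps $j_t$ of \eqref{osubjt}. Each $j_t$ is a pushout of the map $\id\tensorover{\Sigma_t} i^{\boxprod t}$ whose source is $\osuba\singledbrtbbrc \tensorover{\Sigma_t} Q^{t}_{t-1}$ and whose target is $\osuba\singledbrtbbrc \tensorover{\Sigma_t} Y^{\otimes t}$. Since (trivial) cofibrations are closed under pushout and transfinite composition, I am reduced to showing that each $\id\tensorover{\Sigma_t} i^{\boxprod t}$ is a (trivial) cofibration in $\calm^{\sigmabrcop \times \{d\}}$.

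This last point is the crux, and it is where the equivariant analysis lives. First I would record that $\osuba\singledbrtbbrc$ is cofibrant: by hypothesis $\osuba\singledbrtbc$ is cofibrant in $\calm^{\sigmabrtbcop \times \{d\}}$, and $\osuba\singledbrtbbrc$ is its restriction along the inclusion $\Sigma_t^{\smallop}\times\sigmabrcop \hookrightarrow \sigmabrtbcop$ of the connected sub-groupoid (Remark \ref{rk:xdac-restriction}), so Lemma \ref{cof-restrict} yields cofibrancy. Next, the iterated pushout-product corner map $i^{\boxprod t}\colon Q^{t}_{t-1} \to Y^{\otimes t}$ is a (trivial) cofibration in the projective model structure on $\calm^{\Sigma_t}$; this is the standard consequence of the pushout product axiom read off from the $Q$-filtration \eqref{inductive-q-one-colored}, whose cells are induced from the subgroups $\Sigma_{t-q}\times\Sigma_q$. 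Finally, since $\tensorover{\Sigma_t}$ is a Quillen bifunctor and its left argument $\osuba\singledbrtbbrc$ is cofibrant, the functor $\osuba\singledbrtbbrc \tensorover{\Sigma_t}(-)\colon \calm^{\Sigma_t} \to \calm^{\sigmabrcop\times\{d\}}$ is left Quillen, so it carries the (trivial) cofibration $i^{\boxprod t}$ to the (trivial) cofibration $\id\tensorover{\Sigma_t} i^{\boxprod t}$. The hard part is precisely this equivariant bookkeeping, namely verifying that $i^{\boxprod t}$ is a projective $\Sigma_t$-(trivial) cofibration and that $\tensorover{\Sigma_t}$ is a Quillen bifunctor; I expect the cofibrancy hypothesis on $\sO_A$ to be exactly what makes the trivial-cofibration case go through with no flatness assumption on $\calm$.

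Assembling these facts, each $j_t$ is a (trivial) cofibration, hence so is the composite $j_*\singledbrc$ for every $d\in\fC$ and $[\uc]\in\sigmaofc$; that is, $j_*\colon \sO_A \to \osubainfinity$ is a (trivial) cofibration in $\symseqcm$. The closing assertion that $\osubainfinity$ is cofibrant then follows because $\sO_A$ is cofibrant and, in each $\calm^{\sigmabrcop\times\{d\}}$, a cofibration out of a cofibrant object has cofibrant target.
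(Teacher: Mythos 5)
Your reduction to the case where $i$ is concentrated in a single color and your use of the filtration from Proposition \ref{o-ainfinity} both match the paper's strategy, and correctly isolate the crux: showing that $\id \tensorover{\Sigma_t} i^{\boxprod t}$ in \eqref{osubjt} is a (trivial) cofibration. But your resolution of that crux contains a genuine gap. You assert that $i^{\boxprod t}$ is a (trivial) cofibration in the \emph{projective} model structure on $\calm^{\Sigma_t}$ as a ``standard consequence of the pushout product axiom.'' That is false in a general monoidal model category: it is precisely Lurie's commutative monoid axiom (discussed in Section \ref{subsec:entrywise-cofibrant-admissible}), an \emph{additional} hypothesis which this lemma is specifically engineered to avoid. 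For a concrete failure, take $\calm = \Ch(\F_p)$ and $i : 0 \to \F_p$; then $i^{\boxprod p}$ is $0 \to \F_p$ with the (trivial) permutation action of $\Sigma_p$, and $\F_p$ with trivial action is not a projectively cofibrant $\F_p[\Sigma_p]$-module. The $Q$-filtration does not rescue this: its induced cells only describe the stages $Q_0 \to \cdots \to Q_{t-1}$, while the last map $Q_{t-1} \to Y^{\otimes t}$ \emph{is} $i^{\boxprod t}$ and is not presented as an induced cell.

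What the pushout product axiom actually gives is that $i^{\boxprod t}$ is an \emph{underlying} cofibration in $\calm$, and the paper's proof is arranged so that this weaker fact suffices. One tests $\id \tensorover{\Sigma_t} i^{\boxprod t}$ against a trivial fibration $p : C \to D$ and passes to the adjoint lifting problem, which asks for a lift of $\varnothing \to \sO_A\singledbrtbbrc$ against the pullback corner map $(i^{\boxprod t}, p)$ in $\calm^{\sigmaop_t \times \sigmabrcop \times \{d\}}$. Since $\sO_A\singledbrtbbrc$ is projectively cofibrant (your appeal to Lemma \ref{cof-restrict} here is correct), the lift exists as soon as $(i^{\boxprod t}, p)$ is an \emph{entry-wise} trivial fibration, and that follows from the pullback-corner form of the pushout product axiom (Remark \ref{remark-fibration-version-pp-axiom}) using only the underlying cofibrancy of $i^{\boxprod t}$. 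Equivalently, one may cite \cite{bm06} (2.5.1): the pushout product of a projective cofibration with an underlying cofibration, followed by coinvariants, is a cofibration. In short, the equivariant cofibrancy must be carried entirely by $\sO_A$, not by $i^{\boxprod t}$; your version of the Quillen bifunctor statement puts the weight on the wrong variable.
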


\begin{proof}
Suppose $i$ is a cofibration in $\calmc$; the case when it is a trivial cofibration is proved similarly.

First observe that we may reduce to the case where $i$ is concentrated in a single color, say $b \in \fC$.  Indeed, $\calmc$ is a cofibrantly generated model category, in which each generating cofibration is concentrated in one color \cite{hirschhorn} (11.1.10).  So the cofibration $i$ is a retract of an $\sI$-cell complex, where $\sI$ is the set of generating cofibrations in $\calmc$.  A retract and transfinite induction argument implies that, if the assertion is true for $i \in \sI$, then it is true for all cofibrations in $\calmc$.  Therefore, we may assume that $i$ is concentrated in one color $b \in \fC$ such that  the $b$-colored entry of $i$ is a cofibration in $\calm$.

Since $i$ is concentrated in one color, we may use the filtration \eqref{o-ainfinity-colim} of $j_*$.   Since cofibrations are closed under pushouts and transfinite compositions \cite{hirschhorn} (10.3.4), to show that $j_*$ is a cofibration, it is enough to show that the left vertical map $\id \otimes_{\Sigma_t} i^{\boxprod t}$ in \eqref{osubjt} is a cofibration in $\calm^{\sigmabrcop \times \{d\}}$.  

Suppose $p : C \to D \in \calm^{\sigmabrcop  \times \{d\}}$ is a trivial fibration, i.e., an entry-wise trivial fibration in $\calm$ \cite{hirschhorn} (11.6.1).  Then the lifting problem
\[
\nicexy@R+10pt{
\osuba \singledbrtbbrc \tensorover{\Sigma_t} Q^{t}_{t-1} 
\ar[d]_-{\id \tensorover{\Sigma_t} i^{\boxprod t}} \ar[r]
& 
C \ar[d]^-{p}
\\
\osuba \singledbrtbbrc \tensorover{\Sigma_t} Y^{\otimes t}
\ar[r]_-{\xi_t} \ar@{.>}[ur]
&
D.
}\]
in $\calm^{\sigmabrcop  \times \{d\}}$ admits a dotted filler if and only if the adjoint lifting problem
\[
\nicexy@R+10pt{
\varnothing \ar[d] \ar[r]
& 
\Hom(Y^{\otimes t}, C) \ar[d]^-{(i^{\boxprod t},p)}
\\
\osuba \singledbrtbbrc \ar[r] \ar@{.>}[ur]
& 
\Hom(Q^t_{t-1}, C) \timesover{\Hom(Q^t_{t-1}, D)} \Hom(Y^{\otimes t}, D)
}\]
in $\calm^{\sigmaop_t \times \sigmabrcop \times \{d\}}$ admits a dotted lift.  Since the object
\[
\sO_A\singledbrtbc \in \calm^{\sigmabrtbcop \times \{d\}}
\]
is cofibrant by assumption, its restriction (Remark \ref{rk:xdac-restriction})
\[
\sO_A  \singledbrtbbrc \in \calm^{\sigmaop_t \times \sigmabrcop \times \{d\}}
\]
is also cofibrant by Lemma \ref{cof-restrict}.  Therefore, it suffices to show that the right vertical map $(i^{\boxprod t},p)$ is a trivial fibration in $\calm^{\sigmaop_t \times \sigmabrcop \times \{d\}}$, i.e., an entry-wise trivial fibration in $\calm$.  The iterated pushout product $i^{\boxprod t}$ is a cofibration in $\calm$ by the pushout product axiom.  Moreover, $p$ is an entry-wise trivial fibration in $\calm$.  So the pullback corner form of the pushout product axiom \cite{hovey} (4.2.2) implies that $(i^{\boxprod t}, p)$ is an entry-wise trivial fibration.
\end{proof}

Denote by $\calmc_{\cof}$ the collection of cofibrations in $\calmc = \prod_{\fC} \calm$.  Recall the adjunction \eqref{free-algebra-adjoint}.  The following observation is needed later when we apply the semi-model structure existence theorem \cite{fresse} (\ref{thm:fresse-semi-existence}).

\begin{proposition}
\label{colored-5.44}
Suppose $\sO$ is a $\fC$-colored operad, $A \in \alg(\sO)$, $i : X \to Y \in \calmc$, and
\begin{equation}
\label{j-pushoutof-i}
\nicexy{
\sO \circ X \ar[d]_-{\id \circ i} \ar[r]^-{f} 
& A \ar[d]^-{j}
\\
\sO \circ Y \ar[r] 
& A_{\infty}
}
\end{equation}
is a pushout in $\alg(\sO)$.  Suppose:
\begin{itemize}
\item
$i \in \calmc$ is a (trivial) cofibration.
\item
$A \in \alg(\sO)$ is an $\left(\sO \comp \calmc_{\cof}\right)$-cell complex. 
\item
 $\sO \in \symseqcm$ is cofibrant.
\end{itemize}
Then the underlying map of $j \in \calmc$ is also a (trivial) cofibration.
\end{proposition}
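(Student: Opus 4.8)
The plan is to deduce the statement from Lemma \ref{5.44-1}, using Propositions \ref{o-sub-empty} and \ref{osuba-empty} to recover $\sO$ and $A$ from $\sO_A$. The only hypothesis of Lemma \ref{5.44-1} not already among our assumptions is the cofibrancy of $\sO_A \in \symseqcm$, so the first and main task is to establish this from the cell structure on $A$. Once $\sO_A$ is known to be cofibrant, Lemma \ref{5.44-1} shows that the induced map $j_* : \sO_A \to \sO_{A_{\infty}}$ is a (trivial) cofibration in $\symseqcm$, and I will then extract the underlying map of $j$ from the arity-zero component of $j_*$.

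First I would show that $\sO_A$ is cofibrant by transfinite induction along the presentation of $A$ as an $\left(\sO \comp \calmc_{\cof}\right)$-cell complex, i.e.\ a transfinite composition $\emptyo = A^{(0)} \to A^{(1)} \to \cdots$ of pushouts of maps of the form $\sO \comp h$ with $h$ a cofibration in $\calmc$. The inductive claim is that each $\sO_{A^{(\beta)}}$ is cofibrant and each transition map $\sO_{A^{(0)}} \to \sO_{A^{(\beta)}}$ is a cofibration in $\symseqcm$. For the base case, Proposition \ref{o-sub-empty} gives $\sO_{\emptyo} \cong \sO$, which is cofibrant by hypothesis. For a successor step $A^{(\beta)} \to A^{(\beta+1)}$, a pushout of $\sO \comp h$ along a cofibration $h$, I would invoke Lemma \ref{5.44-1} with $A^{(\beta)}$ in place of $A$ and $h$ in place of $i$: since $\sO_{A^{(\beta)}}$ is cofibrant by the inductive hypothesis and $h$ is a cofibration, the map $\sO_{A^{(\beta)}} \to \sO_{A^{(\beta+1)}}$ is a cofibration in $\symseqcm$, whence its target is again cofibrant. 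At a limit ordinal $\lambda$, Lemma \ref{osubdc} identifies $\sO_{A^{(\lambda)}}\singledbrc$ with $\colim_{\beta < \lambda} \sO_{A^{(\beta)}}\singledbrc$ for all $d \in \fC$ and $[\uc] \in \sigmaofc$, so $\varnothing \to \sO_{A^{(\lambda)}}\singledbrc$ is a transfinite composition of the cofibrations produced at the successor stages and is therefore a cofibration. Passing to the top of the presentation shows that $\sO_A \in \symseqcm$ is cofibrant.

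With $\sO_A$ cofibrant, Lemma \ref{5.44-1} applies to the given pushout \eqref{j-pushoutof-i} and yields that $j_* : \sO_A \to \sO_{A_{\infty}}$ is a (trivial) cofibration in $\symseqcm$; in particular every component $\sO_A\singledbrc \to \sO_{A_{\infty}}\singledbrc$ is a (trivial) cofibration in $\calm^{\sigmabrcop \times \{d\}}$. I would then specialize to the empty profile $\uc = \varnothing$: the automorphism groupoid $\Sigma_{[\varnothing]}$ of the empty profile is trivial, so $\calm^{\sigmabrcop \times \{d\}} \cong \calm$ in this case, and the component $\sO_A\singledempty \to \sO_{A_{\infty}}\singledempty$ is a (trivial) cofibration in $\calm$. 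By the natural isomorphism $\sO_A\singledempty \cong A_d$ of Proposition \ref{osuba-empty} (and its counterpart for $A_{\infty}$), this component is precisely the $d$-colored entry of the underlying map of $j$. Since (trivial) cofibrations in $\calmc = \prod_{\fC} \calm$ are detected color-wise, the underlying map of $j$ is a (trivial) cofibration in $\calmc$, as claimed.

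I expect the main obstacle to be the cofibrancy of $\sO_A$, and specifically the bookkeeping at limit ordinals, where one must know that $\sO_{(-)}\singledbrc$ commutes with the relevant filtered colimits. This is exactly the content of Lemma \ref{osubdc}, and combined with the successor-stage input of Lemma \ref{5.44-1} it makes the transfinite induction go through; the passage from $j_*$ to the underlying map of $j$ is then a routine inspection of the arity-zero component.
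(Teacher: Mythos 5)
Your proof is correct, and its heart --- establishing that $\sO_A \in \symseqcm$ is cofibrant by transfinite induction along the cell presentation of $A$, using Lemma \ref{5.44-1} at successor stages and Lemma \ref{osubdc} at limit ordinals --- is exactly what the paper does. Where you diverge is in the endgame. The paper, having secured cofibrancy of $\sO_A$, returns to the filtration \eqref{aoycolim} of $j$ itself: after reducing to $i$ concentrated in a single color, it shows each $j_t$ in \eqref{one-colored-jt-pushout} is a (trivial) cofibration by writing $\sO_A\singledbrtc \tensorover{\Sigma_t} i^{\boxprod t}$ as the image under the left Quillen coinvariants functor of a pushout product and invoking \cite{bm06} (2.5.2). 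You instead apply Lemma \ref{5.44-1} once more to the pushout defining $A_{\infty}$, obtaining that $j_* : \sO_A \to \sO_{A_{\infty}}$ is a (trivial) cofibration in every component, and then read off the underlying map of $j$ as the arity-zero component via the natural isomorphism of Proposition \ref{osuba-empty}, noting that $\Sigma_{[\varnothing]}$ is the trivial groupoid so the arity-zero diagram category is just $\calm$. This is a legitimate and arguably cleaner route: it avoids repeating the filtration analysis and the single-color reduction for $j$, at the cost of leaning on the naturality of Proposition \ref{osuba-empty} to identify the arity-zero component of $j_*$ with the underlying map of $j$. Both arguments ultimately rest on the same equivariant input, since Lemma \ref{5.44-1} is itself proved via the pullback-corner form of the pushout product axiom, so nothing is gained or lost in generality.
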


\begin{proof}
Suppose $i$ is a cofibration; the case when it is a trivial cofibration is proved similarly.  

Write $\sI$ (resp., $\sJ$) for the set of generating cofibrations (resp., generating trivial cofibrations) in $\calmc$.  Each map in $\sI \coprod \sJ$ is concentrated in one color \cite{hirschhorn} (11.1.10).  Since $\calmc$ is a  cofibrantly generated model category with generating cofibrations $\sI$, the map $i$ is a retract of a relative $\sI$-cell complex.  The functor $\sO \comp - : \calmc \to \alg(\sO)$ commutes with colimits (in particular, filtered colimits) because it is a left adjoint.  Therefore, it is enough to consider the case where $i \in \sI$, or more generally a cofibration in $\calmc$ concentrated in one color $c \in \fC$.

We now use the filtration \eqref{aoycolim} for the underlying map of $j \in \calmc$.  Since cofibrations are closed under transfinite compositions \cite{hirschhorn} (10.3.4), it suffices to show that each $j_t$ for $t \geq 1$ is an entry-wise cofibration.  Since $i \in \calmc$ is concentrated in one color $c \in \fC$, for each color $d \in \fC$, the $d$-colored entry of $j_t$ is given by the pushout \eqref{one-colored-jt-pushout}.  So it is enough to show that the left vertical map $\id \otimes_{\Sigma_t} i^{\boxprod t}$ there is a cofibration, where the identity map is for $\sO_A\singledbrtc$.

Note that taking coinvariants $(-)_{\sigmaop_t} : \calm^{\sigmaop_t} \to \calm$ is a left Quillen functor, the right adjoint being the constant diagram functor.  Since 
\[
\sO_A\singledbrtc \tensorover{\Sigma_t} i^{\boxprod t}
= \left[\left(\varnothing \to \sO_A\singledbrtc\right) \boxprod i^{\boxprod t}\right]_{\sigmaop_t},
\]
it is enough to show that the pushout product
\[
\left(\varnothing \to \sO_A\singledbrtc\right) \boxprod i^{\boxprod t} \in \calm^{\sigmaop_t}
\]
is a cofibration.  The iterated pushout product $i^{\boxprod t} \in \calm^{\sigmaop_t}$ is an underlying cofibration in $\calm$ by the pushout product axiom.  Therefore, by \cite{bm06} (2.5.2) it suffices to show that $\sO_A \singledbrtc \in \calm^{\sigmaop_t}$ is cofibrant.  In particular, it is enough to show that $\sO_A \in \symseqcm$ is cofibrant.

By the cofibrancy assumption on $\sO \in \symseqcm$ and the isomorphism \eqref{oemptyiso}, it is enough to show that the map
\[
\nicexy{
\sO \cong \sO_{\varnothing} \ar[r]
& \sO_A \in \symseqcm
}\]
induced by $\varnothing \to A \in \alg(\sO)$ is a cofibration.  By assumption the map $\varnothing \to A$ is a transfinite composition of pushouts of maps in $\sO \comp \calmc_{\cof}$.  So a transfinite induction using Lemma \ref{5.44-1} repeatedly proves that $\sO_{\varnothing} \to \sO_A \in \symseqcm$ is a cofibration.
\end{proof}

\section{Model Structures on Algebras over Colored Operads}
\label{sec:model-on-algebras}

In this section we will find conditions on a monoidal model category $\M$ and/or a colored operad $\sO$ so that $\sO$-algebras inherit a (semi-)model structure from $\M$. For a monad $T$, the category of $T$-algebras is said to \textit{inherit} a model structure from $\M$ if the weak equivalences (resp. fibrations) of $T$-algebras are maps that are weak equivalences (resp. fibrations) in $\M$. We refer to this as the \textit{projective} (semi-)model structure.

In each of the following three subsections we make use of the filtration of the preceding sections.  In \ref{subsec:all-operads-admissible} we extend a result from \cite{harper-jpaa} to the colored setting and prove that if $\M$ satisfies strong cofibrancy hypotheses (e.g. if $\M$ is chain complexes over a field of characteristic zero) then all operads are admissible, i.e., the category of algebras inherits a projective model structure.  In \ref{subsec:entrywise-cofibrant-admissible} we extend a result from \cite{white-thesis} to the case of colored operads, and prove that one can distribute this ``cofibrancy price'' between the operad and the model category so that the category of algebras over an  entrywise cofibrant colored operad inherits a projective semi-model structure with minimal hypotheses on $\M$.  Lastly,  in \ref{subsec:sigma-cof-admissible} we recover the fact (proven in the appendix of \cite{gutierrez-rondigs-spitzweck-ostvaer}) that algebras over colored operads which are cofibrant in $\symseqcm$ inherit projective semi-model structures. This is to say, for sufficiently cofibrant colored operads, almost no hypotheses are needed on $\M$ in order to have a good homotopy theory of operad-algebras.  In all three settings we include results proving that cofibrations of algebras with cofibrant source forget to cofibrations in $\M$.

\subsection{All Colored Operads}
\label{subsec:all-operads-admissible}

The following result says that, under a suitable cofibrancy assumption on $\calm$, every colored operad is admissible. For ease of exposition we have chosen to assume that the domains of the generating (trivial) cofibrations of our model category are small (such model categories are called \textit{strongly cofibrantly generated} in \cite{jy1}). In fact, the following results could be proven with lesser smallness hypotheses, though the statements would be more technical. We leave this extension to the interested reader.

\begin{theorem}
\label{colored-model}
Suppose $\calm$ is a strongly cofibrantly generated monoidal model category.  Suppose that 
\begin{quote}
$(\spadesuit)$ : for each $n \geq 1$ and for each object $X \in \calm^{\sigmaop_n}$, the function
\[
X \tensorover{\Sigma_n} (-)^{\boxprod n} : \calm \to \calm
\]
preserves trivial cofibrations.  
\end{quote}
Then for each $\fC$-colored operad $\sO$, the category $\alg(\sO)$ admits a projective model structure with weak equivalences and fibrations created in $\calmc$.  Moreover, this model structure is cofibrantly generated in which the set of generating (trivial) cofibrations is $\sO \comp \sI$ (resp., $\sO \comp \sJ$), where $\sI$ (resp., $\sJ$) is the set of generating (trivial) cofibrations in $\calmc$.
\end{theorem}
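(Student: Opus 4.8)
The plan is to apply Kan's Lifting Theorem \cite{hirschhorn} (11.3.2) to the free-forgetful adjunction \eqref{free-algebra-adjoint}, transporting the model structure along $\sO \comp -$ with candidate generating (trivial) cofibrations $\sO \comp \sI$ (resp. $\sO \comp \sJ$) and with weak equivalences and fibrations created by the forgetful functor $U : \alg(\sO) \to \calmc$. Since $\alg(\sO)$ is bicomplete by Proposition \ref{algebra-bicomplete}, only two hypotheses of the lifting theorem remain to be checked: (i) a smallness condition on the domains of $\sO \comp \sI$ and $\sO \comp \sJ$, and (ii) that $U$ carries every relative $(\sO \comp \sJ)$-cell complex to a weak equivalence in $\calmc$.

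For (i), recall that $\calm$ is strongly cofibrantly generated, so the domains of $\sI$ and $\sJ$ are small in $\calmc = \prod_{\fC} \calm$. The adjunction isomorphism $\alg(\sO)(\sO \comp K, -) \cong \calmc(K, U(-))$ together with the fact that $U$ preserves filtered colimits (Proposition \ref{algebra-bicomplete}) shows that smallness of $K$ in $\calmc$ implies smallness of $\sO \comp K$ in $\alg(\sO)$; this supplies the smallness needed to run the small object argument for $\sO \comp \sI$ and $\sO \comp \sJ$.

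The heart of the argument is (ii), and it is here that $(\spadesuit)$ enters. A relative $(\sO \comp \sJ)$-cell complex is a transfinite composition of pushouts of maps in $\sO \comp \sJ$, and since $U$ preserves such transfinite compositions (being filtered colimits), it suffices to prove that $U$ sends each constituent pushout to a trivial cofibration in $\calmc$, because trivial cofibrations in $\calmc$ are closed under transfinite composition. Each generating trivial cofibration in $\calmc$ is concentrated in a single color \cite{hirschhorn} (11.1.10), so a typical such pushout has the form \eqref{algebra-pushout} with $i : X \to Y$ a trivial cofibration concentrated in one color $c$. By Proposition \ref{colored_jpaa7.12}, the underlying map $U(j) : A \to A_\infty$ is the countable composite of the maps $j_t$ in the pushout squares \eqref{one-colored-jt-pushout}, whose left vertical map is $\id \tensorover{\Sigma_t} i^{\boxprod t} = \sO_A\singledbrtc \tensorover{\Sigma_t} i^{\boxprod t}$, using that $i^{\boxprod t} : Q^t_{t-1} \to Y^{\otimes t}$ is the iterated pushout product. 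Applying $(\spadesuit)$ with $n = t$ and $X = \sO_A\singledbrtc \in \calm^{\sigmaop_t}$ to the trivial cofibration $i$ shows this map is a trivial cofibration; hence each $j_t$ is a pushout of a trivial cofibration and therefore a trivial cofibration, and so is the composite $U(j)$.

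The main obstacle is exactly the verification of (ii): for an arbitrary colored operad $\sO$ the object $\sO_A$ need not be cofibrant, so one has no direct handle on $\sO_A\singledbrtc \tensorover{\Sigma_t} i^{\boxprod t}$ except through hypothesis $(\spadesuit)$, which is tailored precisely to make these maps trivial cofibrations regardless of the equivariant object $\sO_A\singledbrtc$. Once (i) and (ii) hold, Kan's Lifting Theorem delivers the cofibrantly generated projective model structure with generating (trivial) cofibrations $\sO \comp \sI$ and $\sO \comp \sJ$ and with weak equivalences and fibrations created in $\calmc$, completing the proof.
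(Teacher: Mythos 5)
Your proposal is correct and follows essentially the same route as the paper's proof: Kan's Lifting Theorem applied to the free-forgetful adjunction, smallness of the domains of $\sO \comp \sI$ and $\sO \comp \sJ$ via preservation of filtered colimits by the forgetful functor, reduction to a single pushout of a generating trivial cofibration concentrated in one color, and then the filtration of Proposition \ref{colored_jpaa7.12} combined with $(\spadesuit)$ to see that each $j_t$ is a pushout of the trivial cofibration $\id \tensorover{\Sigma_t} i^{\boxprod t}$. No gaps.
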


\begin{proof}
We will use Kan's Lifting Theorem \cite{hirschhorn} (11.3.2) on the adjunction
\[
\nicexy{
\calmc \ar@<2pt>[r]^-{\sO \comp -} 
& \alg(\sO) \ar@<2pt>[l]
}\]
in \eqref{free-algebra-adjoint}.  The Cartesian product $\calmc$ is also strongly cofibrantly generated by \cite{hirschhorn} (11.1.10), in which each generating (trivial) cofibration is concentrated in one color and is a generating (trivial) cofibration of $\calm$ there.  Let us now check the conditions in \cite{hirschhorn} (11.3.2).
\begin{enumerate}
\item
The category $\alg(\sO)$ has all small limits and colimits by Proposition \ref{algebra-bicomplete}.
\item
Since the forgetful functor $\alg(\sO) \to \calmc$ preserves filtered colimits (by Proposition \ref{algebra-bicomplete}) and since the domains in $\sI$ and $\sJ$ are small ($=$ the \emph{strongly} assumption), the domains of $\sO \comp \sI$ and $\sO \comp \sJ$ are also small.  So $\sO \comp \sI$ and $\sO \comp \sJ$ permit the small object argument.  This checks \cite{hirschhorn} 11.3.2(1).
\item
Finally, we need to check that every relative $(\sO \comp \sJ)$-cell complex is an underlying weak equivalence in $\calmc$ (i.e., an entry-wise weak equivalence).  We will prove slightly more.  We claim that every relative $(\sO \comp \sJ)$-cell complex is an underlying trivial cofibration in $\calmc$.  Since the model structure on $\calmc$ is defined entry-wise \cite{hovey} (1.1.6) and since trivial cofibrations are closed under transfinite compositions \cite{hirschhorn} (10.3.4), it is enough to consider a single pushout
\[
\nicexy{
\sO \circ X \ar[d]_-{\id \circ i} \ar[r]^-{f} 
& A_0 \ar[d]^-{j}
\\
\sO \circ Y \ar[r] 
& A_{\infty}
}\]
in $\alg(\sO)$, in which $i : X \to Y \in \sJ$.  In particular, $i$ is concentrated in a single color and is a generating trivial cofibration of $\calm$ there.  We must show that  $j$ is entry-wise trivial cofibration in $\calm$.  

By the filtration \eqref{aoycolim}, it suffices to show that each map
\[
j_t : (A_{t-1})_d \to (A_t)_d
\]
is a trivial cofibration in $\calm$ for each color $d \in \fC$ and $t \geq 1$.  Since $i \in \sJ$ is concentrated in a single color, by the pushout \eqref{one-colored-jt-pushout}, it is enough to show that $\id \tensorover{\Sigma_t} i^{\boxprod t}$ is a trivial cofibration in $\calm$.  By our hypothesis $(\spadesuit)$, it is now enough to observe that $i$ is a trivial cofibration in $\calm$.
\end{enumerate}
All the conditions in \cite{hirschhorn} (11.3.2) have now been checked.
\end{proof}

\begin{remark}
In the special case of $1$-colored operads (i.e., when $\fC = \{*\}$), Theorem \ref{colored-model} is a slight improvement of (the algebra part of) \cite{harper-jpaa} (Theorem 1.4), which assumes that every symmetric sequence is cofibrant.  Indeed, when every symmetric sequence is cofibrant, the condition $(\spadesuit)$ follows from the pushout product axiom.
\end{remark}

\begin{remark} \label{remark-more-gen-spadesuit}
In the one-colored case, Theorem \ref{colored-model} first appeared in \cite{white-thesis}. A result similar to \ref{colored-model} (which also holds for colored operads) has recently appeared in the preprint \cite{dmitri}. These two results have different hypotheses, but both extend \cite{harper-jpaa} to the setting of colored operads.

Our proof allows for more general results in \ref{subsec:entrywise-cofibrant-admissible} and \ref{subsec:sigma-cof-admissible} that remove or weaken the hypothesis $(\spadesuit)$ (and so hold for a wider class of model categories) and that result in semi-model structures on categories of algebras. 

Observe that $(\spadesuit)$ could be weakened to state that $X \otimes_{\Sigma_n} (-)^{\boxprod n}$ takes trivial cofibrations into some class of morphisms contained in the weak equivalences and closed under transfinite composition and pushout (e.g. the trivial $h$-cofibrations of \cite{batanin-berger}) as our filtration shows $j_t$ and $j$ would be in this class, hence weak equivalences.
\end{remark}

\begin{example}
Theorem \ref{colored-model} applies to all the colored operads in section \ref{subsec:def-colored-algebra}.  For instance, by the isomorphism \eqref{colored-op-algebras}, for each non-empty set of colors $\fC$, the category $\operadsigmac$ of $\fC$-colored operads in $\calm$ inherits a cofibrantly generated model structure, where weak equivalences and fibrations are created entry-wise in $\calm$.
\end{example}

The next observation says that every $\sO$-algebra cofibration with a cofibrant domain (resp., every cofibrant $\sO$-algebra) is an underlying cofibration (resp., cofibrant object), provided  that $\sO$ is $\Sigma$-cofibrant.

\begin{proposition}
\label{underlying-cof}
Suppose $\calm$ is a cofibrantly generated monoidal model category.  Suppose $\sO$ is a $\fC$-colored operad such that:
\begin{itemize}
\item
$\alg(\sO)$ inherits from $\calmc$ a cofibrantly generated projective model structure (which holds, e.g., in the context of Theorem \ref{colored-model});
\item
$\sO \in \symseqcm$ is cofibrant.
\end{itemize}
Then the following statements hold.
\begin{enumerate}
\item
If $j : A \to B \in \alg(\sO)$ is a cofibration with $A$ cofibrant in $\alg(\sO)$, then the underlying map $j \in \calmc$ is a cofibration.
\item
Every cofibrant $\sO$-algebra is cofibrant in $\calmc$.
\end{enumerate}
\end{proposition}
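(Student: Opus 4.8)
The plan is to route everything through the functor $\sO_{(-)} \colon \alg(\sO) \to \symseqcm$ of Definition \ref{oaalgebra}, exploiting that it is genuinely functorial and that, componentwise, it preserves filtered colimits by Lemma \ref{osubdc}. The key step I would isolate first is the following claim: \emph{if $C \in \alg(\sO)$ is cofibrant, then $\sO_C \in \symseqcm$ is cofibrant.} Granting this, both parts fall out quickly by passing to $\emptyset$-components, since the empty profile has trivial symmetry group and Proposition \ref{osuba-empty} supplies a natural isomorphism $\sO_C\singledempty \cong C_d$ in $\calm$.

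To prove the claim I would use that, by Theorem \ref{colored-model}, the projective model structure on $\alg(\sO)$ is cofibrantly generated with generating cofibrations $\sO \comp \sI$, so that $\emptyo \to C$ is a retract of a relative $(\sO \comp \sI)$-cell complex $\emptyo \to \overline{C}$, each of whose attaching maps is $\sO \comp i$ with $i \in \sI$ concentrated in a single color [\cite{hirschhorn} (11.1.10)]. Starting from $\sO_{\emptyo} \cong \sO$ (Proposition \ref{o-sub-empty}, equation \eqref{oemptyiso}), which is cofibrant by hypothesis, I would induct transfinitely along the cells of $\overline{C}$: Lemma \ref{5.44-1} shows that each stagewise map of $\sO_{(-)}$-objects is a cofibration whose target is again cofibrant, while Lemma \ref{osubdc} identifies $\sO_{\overline{C}}$ with the colimit of these stages. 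Thus $\sO_{\emptyo} \to \sO_{\overline{C}}$ is a cofibration and $\sO_{\overline{C}}$ is cofibrant; since $\sO_{(-)}$ is a functor and $C$ is a retract of $\overline{C}$, the object $\sO_C$ is a retract of $\sO_{\overline{C}}$, hence cofibrant.

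Statement (2) is then immediate: for cofibrant $A$ the claim makes $\sO_A$ cofibrant, so its $\emptyset$-component $\sO_A\singledempty$ is a cofibrant object of $\calm$, and Proposition \ref{osuba-empty} identifies it with $A_d$; therefore $A_d$ is cofibrant for every $d \in \fC$, i.e. $A$ is cofibrant in $\calmc$. For statement (1), I would by the retract argument reduce to the case where $j \colon A \to B$ is a relative $(\sO \comp \sI)$-cell complex, written as a transfinite composite $A = B_0 \to B_1 \to \cdots$ of pushouts of maps $\sO \comp i$ with $i \in \sI$; the general cofibration with cofibrant domain is a retract of such a map fixing $A$, and $\sO_{(-)}$ sends retracts of arrows to retracts of arrows. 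Each $B_s$ is cofibrant (it is $A$ with cells attached), so the claim gives $\sO_{B_s}$ cofibrant and Lemma \ref{5.44-1} makes each $\sO_{B_s} \to \sO_{B_{s+1}}$ a cofibration in $\symseqcm$; using Lemma \ref{osubdc} to pass the transfinite composite through $\sO_{(-)}$, I conclude that $\sO_A \to \sO_B$ is a cofibration. Taking $\emptyset$-components and invoking the naturality in Proposition \ref{osuba-empty} then yields that $A_d \to B_d$ is a cofibration in $\calm$ for each $d$, i.e. $j$ forgets to a cofibration in $\calmc$.

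The main obstacle I anticipate is not any single estimate but the bookkeeping around two different meanings of ``cofibrant'': cofibrancy in $\alg(\sO)$ is measured against the initial algebra $\emptyo$, whose entries $\sO\singledempty$ are generally not initial in $\calm$, whereas cofibrancy in $\calmc$ is measured against the genuine initial object. Comparing $\sO_{(-)}$ of the two algebras and then extracting $\emptyset$-components is exactly what reconciles these, and the delicate points are verifying that the hypotheses of Lemma \ref{5.44-1} (in particular cofibrancy of the domain $\sO_{B_s}$) persist at every stage of the transfinite induction and that $\sO_{(-)}$ genuinely commutes with the transfinite compositions in play, which is precisely the content of Lemma \ref{osubdc}.
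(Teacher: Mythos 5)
Your proof is correct, but it extracts the conclusion by a different route than the paper. The paper proves part (1) by reducing, via retracts and transfinite induction, to a single pushout of $\sO \comp i$ against a cofibrant algebra and then invoking Proposition \ref{colored-5.44}, whose own proof runs the filtration \eqref{aoycolim} of the algebra pushout in $\calmc$ and checks that each map $\id \otimes_{\Sigma_t} i^{\boxprod t}$ is a cofibration using \cite{bm06} (2.5.2) together with the cofibrancy of $\sO_A$. You never touch the filtration \eqref{aoycolim}: you observe that Lemma \ref{5.44-1} already produces a cofibration $\sO_A \to \sO_{A_\infty}$ in $\symseqcm$, and that restricting to arity-zero components, where the symmetry groupoid is trivial and Proposition \ref{osuba-empty} identifies $\sO_{(-)}\singledempty$ with the underlying $\fC$-colored object, converts this into the desired underlying cofibration in $\calmc$. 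Both arguments rest on the same transfinite induction with Lemma \ref{5.44-1} to show that $\sO_C$ is cofibrant in $\symseqcm$ for cofibrant $C$ (this is exactly the last paragraph of the paper's proof of Proposition \ref{colored-5.44}); the difference lies only in how the underlying cofibration is then read off. Your version unifies parts (1) and (2) through the single cofibrancy claim about $\sO_{(-)}$ and avoids the appeal to \cite{bm06} (2.5.2), while the paper obtains the proposition nearly for free from Proposition \ref{colored-5.44}, which it must prove anyway to verify condition $(*)$ of Theorem \ref{thm:fresse-semi-existence}. The one point to keep explicit in your write-up is the naturality of the isomorphism $\sO_A\singledempty \cong A_d$ from Proposition \ref{osuba-empty}, which is what makes the passage to arity-zero components compatible with the maps; you do invoke it, and it holds as stated there.
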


\begin{proof}
First consider part (1).  Write $\sI$ for the set of generating cofibrations in $\calmc$.  By assumption the cofibration $j$ is a retract of a relative $(\sO \comp \sI)$-cell complex.  By a retract and transfinite induction argument, it is enough to prove that, given the pushout \eqref{j-pushoutof-i} with $i \in \sI$ and $A \in \alg(\sO)$ cofibrant, the map $j \in \calmc$ is a cofibration.  Proposition \ref{colored-5.44} says that this assertion is true whenever $A$ is an $(\sO \comp \calmc_{\cof})$-cell complex, which includes any $(\sO \comp \sI)$-cell complex.  By a retract argument, this assertion is also true for retracts of $(\sO \comp \sI)$-cell complexes, i.e., cofibrant $\sO$-algebras, proving part (1).

Write $\emptyo$ for the initial $\sO$-algebra \eqref{initial-o-algebra}.  For part (2), suppose $B \in \alg(\sO)$ is cofibrant, so $\iota : \emptyo \to B$ is a cofibration in $\alg(\sO)$.  We want to show that the underlying object $B \in \calmc$ is cofibrant.  Since $\emptyo$, being the initial object, is a cofibrant $\sO$-algebra, part (1) implies that the underlying map $\iota \in \calmc$ is a cofibration.  So it suffices to show that $\emptyo \in \calmc$ is cofibrant, i.e., color-wise cofibrant in $\calm$.  For each color $d \in \fC$, the $d$-colored entry of $\emptyo$ is $\sO\singledempty$, which is the $\singledempty$-component of $\sO$.  But by assumption every $\singledbrc$-component of $\sO$ is cofibrant, which holds in particular when $\uc = \varnothing$.
\end{proof}

\begin{example}
In the context of Theorem  \ref{colored-model}, Proposition \ref{underlying-cof} applies to the colored operad $\opc$ in Example \ref{ex:operad-of-operad}, which is $\Sigma$-cofibrant.  In other words, in the model category $\operadsigmac \cong \alg(\opc)$, every cofibration with a cofibrant domain is an entry-wise cofibration in $\calm$, and every cofibrant $\fC$-colored operad is entry-wise cofibrant in $\calm$.
\end{example}

\subsection{Entrywise Cofibrant Colored Operads}
\label{subsec:entrywise-cofibrant-admissible}

The following condition should be compared to $(\spadesuit)$ in Theorem \ref{colored-model}.

\begin{definition}
Suppose $\calm$ is a symmetric monoidal category and is a model category.  Define the following condition.
\begin{quote}
$(\clubsuit)$: For each $n \geq 1$ and $X \in \calm^{\sigmaop_n}$ that is cofibrant in $\calm$, the function
\[
X \tensorover{\Sigma_n} (-)^{\boxprod n} : \calm \to \calm
\]
preserves (trivial) cofibrations.
\end{quote}
The condition $(\clubsuit)$ for cofibrations will be referred to as $\clubcof$, and the condition for trivial cofibrations as $\clubacof$.  So $(\clubsuit) = \clubcof + \clubacof$.
\end{definition}

Recall from \cite{white-commutative} and \cite{lurie} that \emph{Lurie's commutative monoid axiom} says that if $f \in \calm$ is a cofibration, then $f^{\boxprod n} \in \calm^{\Sigma_n}$ is a $\Sigma_n$-equivariant cofibration for each $n \geq 1$.

\begin{proposition}
In each monoidal model category, Lurie's commutative monoid axiom implies $\clubcof$. 
\end{proposition}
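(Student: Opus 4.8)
The plan is to factor the functor $X \tensorover{\Sigma_n}(-)^{\boxprod n}$ as the iterated pushout product $(-)^{\boxprod n}$, which lands in $\calm^{\Sigma_n}$, followed by the coend functor $X \tensorover{\Sigma_n}(-) : \calm^{\Sigma_n} \to \calm$ of Definition \ref{def:tensorover} (taking $\cald = \Sigma_n$, the one-object groupoid). Lurie's commutative monoid axiom handles the first functor directly: if $f \in \calm$ is a cofibration, then $f^{\boxprod n}$ is a cofibration in the projective model structure on $\calm^{\Sigma_n}$. Thus $\clubcof$ will follow once I show that, for every $X \in \calm^{\sigmaop_n}$ whose underlying object is cofibrant, the functor $X \tensorover{\Sigma_n}(-)$ carries projective cofibrations in $\calm^{\Sigma_n}$ to cofibrations in $\calm$.

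First I would exhibit $X \tensorover{\Sigma_n}(-)$ as a left adjoint. Writing it as the composite of the diagonal tensor $X \otimes (-) : \calm^{\Sigma_n} \to \calm^{\Sigma_n}$ with the coinvariants functor $(-)_{\Sigma_n} : \calm^{\Sigma_n} \to \calm$, its right adjoint is $W \longmapsto \Hom(X, W)$, where the internal hom is given the $\Sigma_n$-action induced by the $\sigmaop_n$-action on $X$ (and the trivial action on $W$). It then suffices to show this right adjoint is right Quillen; in fact, for $\clubcof$ I only need that it preserves trivial fibrations, which is precisely equivalent to $X \tensorover{\Sigma_n}(-)$ preserving cofibrations.

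The key computation is the following. By \cite{hirschhorn} (11.6.1) the projective model structure on $\calm^{\Sigma_n}$ is detected on underlying objects, so I must check that $\Hom(X, p) \in \calm^{\Sigma_n}$ is an underlying trivial fibration whenever $p : C \to D$ is a trivial fibration in $\calm$. Since the underlying object of $X$ is cofibrant, the map $\varnothing \to X$ is a cofibration, and I would apply the pullback-corner form of the pushout product axiom (Remark \ref{remark-fibration-version-pp-axiom}) to this cofibration and to $p$. Because $\Hom(\varnothing, C)$ and $\Hom(\varnothing, D)$ are both terminal (as $-\otimes\varnothing = \varnothing$), the pullback-corner map
\[
\nicexy{\Hom(X, C) \ar[r] & \Hom(\varnothing, C) \timesover{\Hom(\varnothing, D)} \Hom(X, D)}
\]
is canonically identified with $\Hom(X, p)$, which is therefore a trivial fibration in $\calm$. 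This supplies the required entrywise trivial fibration.

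Assembling the pieces: given a cofibration $f \in \calm$, Lurie's axiom makes $f^{\boxprod n}$ a projective cofibration in $\calm^{\Sigma_n}$, and since $X \tensorover{\Sigma_n}(-)$ preserves cofibrations it sends this to the cofibration $X \tensorover{\Sigma_n} f^{\boxprod n} \in \calm$, which is exactly $\clubcof$. The main obstacle is the bookkeeping in the second step—correctly equipping $\Hom(X, -)$ with its $\Sigma_n$-action so that $X \tensorover{\Sigma_n}(-) \dashv \Hom(X,-)$ genuinely holds over $\calm^{\Sigma_n}$—rather than any homotopical subtlety, since once the adjunction and the entrywise detection of fibrations are in place the pushout product axiom does the rest. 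As a cross-check one may instead verify cofibration-preservation directly on generators, using the isomorphism $X \tensorover{\Sigma_n}(\Sigma_n \cdot A) \cong X \otimes A$ on free cells together with the identity $X \otimes (-) = (\varnothing \to X) \boxprod (-)$, and then extending along colimits and retracts.
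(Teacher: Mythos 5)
Your proof is correct and follows essentially the same route as the paper's: both arguments reduce to showing that $X \otimes_{\Sigma_n} (-)$ carries the projective cofibration $f^{\boxprod n}$ supplied by Lurie's axiom to a cofibration in $\calm$, using that $\varnothing \to X$ is a cofibration together with the pushout product axiom. The only difference is that the paper handles this step by citing \cite{bm06} (2.5.1) and the left Quillen functor $(-)_{\Sigma_n}$, whereas you verify it directly via the adjunction $X \otimes_{\Sigma_n}(-) \dashv \Hom(X,-)$ and the pullback-corner form of the pushout product axiom, which is an equivalent, self-contained version of the same computation.
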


\begin{proof}
If $f \in \calm$ is a cofibration, then Lurie's commutative monoid axiom says that $f^{\boxprod n}$ is a cofibration in $\calm^{\sigmaop_n}$.  Suppose $X \in \calm^{\sigmaop_n}$ is underlying cofibrant in $\calm$.  Then
\[
X \tensorover{\Sigma_n} f^{\boxprod n} 
= 
\left((\varnothing \to X) \boxprod f^{\boxprod n}\right)_{\Sigma_n}
\]
is the image under the left Quillen functor $(-)_{\Sigma_n} : \calm^{\sigmaop_n} \to \calm$ of the pushout product of a map in $\calm^{\sigmaop_n}$ that is an underlying cofibration in $\calm$ with a $\Sigma_n$-equivariant cofibration $f^{\boxprod n}$.  By \cite{bm06} (2.5.1) such a pushout product is a $\Sigma_n$-equivariant cofibration, so its image under the left Quillen functor $(-)_{\Sigma_n}$ is a cofibration in $\calm$.
\end{proof}

\begin{theorem}
\label{middle-row-one-color}
Suppose 
$\calm$ is a cofibrantly generated monoidal model category satisfying $(\clubsuit)$.  Then for each entrywise cofibrant $\fC$-colored operad $\sO$ in $\calm$, the category $\alg(\sO)$ admits a cofibrantly generated \textbf{semi}-model structure over $\calm$  such that the weak equivalences and fibrations are created in $\calm$.  Moreover:
\begin{enumerate}
\item
If $j : A \to B \in \alg(\sO)$ is a cofibration with $A$ cofibrant in $\alg(\sO)$, then the underlying map of $j$ is entrywise a cofibration.
\item
Every cofibrant $\sO$-algebra is entrywise cofibrant in $\calm$.
\end{enumerate}
\end{theorem}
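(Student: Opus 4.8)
The plan is to deduce the semi-model structure from the Semi-Model Category Existence Theorem \ref{thm:fresse-semi-existence}, applied to the free-forgetful adjunction $\sO \comp - : \calmc \rightleftarrows \alg(\sO) : U$ of \eqref{free-algebra-adjoint}. Everything thus reduces to verifying hypothesis $(*)$: for a pushout of the shape \eqref{j-pushoutof-i} in which $A$ is an $(\sO \comp \calmc_{\cof})$-cell complex and $i$ is a (trivial) cofibration in $\calmc$, the underlying map $U(j)$ is an entrywise (trivial) cofibration. First I would reduce to $i$ concentrated in a single color: since $\calmc$ is cofibrantly generated with generating (trivial) cofibrations concentrated in one color \cite{hirschhorn} (11.1.10) and $\sO \comp -$ preserves colimits, the same retract-and-transfinite-composition argument as in the proof of Proposition \ref{colored-5.44} lets me assume $i$ is concentrated in one color $c$. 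Then the filtration \eqref{aoycolim} reduces the problem to showing that each $j_t$ in \eqref{one-colored-jt-pushout} is an entrywise (trivial) cofibration, and since cofibrations are closed under pushout it is enough to show the left vertical map $\id \tensorover{\Sigma_t} i^{\boxprod t}$ is a (trivial) cofibration in $\calm$.

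This is exactly where $(\clubsuit)$ enters, but $(\clubsuit)$ supplies this conclusion only when the coefficient $\sO_A\singledbrtc$ is \emph{underlying cofibrant} in $\calm$; this is the essential difference from Theorem \ref{colored-model}, whose stronger hypothesis $(\spadesuit)$ needs no cofibrancy and therefore yields a genuine model structure rather than a semi-model structure. The crux is therefore the claim that \emph{if $\sO$ is entrywise cofibrant and $A$ is an $(\sO \comp \calmc_{\cof})$-cell complex, then $\sO_A \in \symseqcm$ is entrywise cofibrant.} Note that in $(*)$ the object $A$ is always built from cofibrations (never trivial cofibrations), so this single claim feeds both clauses: granting it, $\sO_A\singledbrtc$ is underlying cofibrant, and then $\clubcof$ (resp.\ $\clubacof$) makes $\id \tensorover{\Sigma_t} i^{\boxprod t}$ a cofibration (resp.\ trivial cofibration), completing $(*)$.

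I would prove the claim by transfinite induction along the cell presentation $\varnothing \to A$, an entrywise analogue of Proposition \ref{colored-5.44} resting on an entrywise analogue of Lemma \ref{5.44-1}. The base case is $\sO_{\varnothing} \cong \sO$ from \eqref{oemptyiso}, entrywise cofibrant by hypothesis. For a single cell attachment, pushing out along $\sO \comp i$, I reduce $i$ to be concentrated in one color $b$ exactly as in Lemma \ref{5.44-1} and invoke the filtration \eqref{o-ainfinity-colim}--\eqref{osubjt} for $\sO_{A_\infty}\singledbrc$. Fixing a representative $\uc' \in [\uc]$ and evaluating at $\uc'$ --- which commutes with the pushouts and the sequential colimit because evaluation at an object of a diagram category preserves colimits, and commutes with $\tensorover{\Sigma_t}$ because $\Sigma_t$ permutes only the $[tb]$-index while $\Sigma_{[\uc]}$ acts independently on $[\uc]$ --- the left vertical map of \eqref{osubjt} becomes $\sO_A\smallprof{$\binom{d}{(tb,\,\uc')}$} \tensorover{\Sigma_t} i^{\boxprod t}$. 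The object $\sO_A\smallprof{$\binom{d}{(tb,\,\uc')}$}$ is a single entry of $\sO_A$, cofibrant by the inductive hypothesis, so $\clubcof$ makes this a cofibration in $\calm$; hence $\sO_{A_\infty}\smallprof{$\binom{d}{\uc'}$}$ is a transfinite composition of pushouts of cofibrations starting from the cofibrant object $\sO_A\smallprof{$\binom{d}{\uc'}$}$, and is therefore cofibrant. As $d$ and $\uc'$ were arbitrary, $\sO_{A_\infty}$ is entrywise cofibrant. I expect this bookkeeping to be the main obstacle: one must track precisely which equivariance is discarded so that $(\clubsuit)$, which provides only \emph{underlying} cofibrancy, becomes applicable, and one must check that evaluation at $\uc'$ is compatible with the quotient $\tensorover{\Sigma_t}$.

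Finally, parts (1) and (2) follow formally. Part (1) is the last assertion of Theorem \ref{thm:fresse-semi-existence}, which says that $U$ carries cofibrations with cofibrant domain to cofibrations in $\calmc$, i.e.\ to entrywise cofibrations. For part (2), a cofibrant $B$ gives an $\alg(\sO)$-cofibration $\emptyo \to B$ with cofibrant (initial) domain, so by part (1) its underlying map is an entrywise cofibration; since the $d$-colored entry of $\emptyo$ is $\sO\singledempty$ by \eqref{initial-o-algebra}, which is cofibrant because $\sO$ is entrywise cofibrant, the algebra $\emptyo$ is entrywise cofibrant and hence so is $B$.
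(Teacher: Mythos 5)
Your proposal is correct and follows essentially the same route as the paper: Fresse's existence theorem reduces everything to condition $(*)$, which the paper verifies in Proposition \ref{1-colored-5.44} via the filtration \eqref{one-colored-jt-pushout}, and your key claim (entrywise cofibrancy of $\sO_A$ for cell complexes $A$, proved by transfinite induction from $\sO_\varnothing \cong \sO$ using the filtration \eqref{osubjt} and $\clubcof$) is precisely the paper's Lemma \ref{middle-row-lemma}. Your handling of parts (1) and (2) also matches the paper's argument verbatim in structure.
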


The proof of Theorem \ref{middle-row-one-color} uses the following observation.

\begin{lemma}
\label{middle-row-lemma}
Suppose $\calm$ is a symmetric monoidal closed category and is a model category satisfying $\clubcof$, and $\sO$ is a $\fC$-colored operad in $\calm$.
\begin{enumerate}
\item
Suppose $j : A \to B \in \alg(\sO)$ is a relative $(\sO \comp \calm_{\cof})$-cell complex, i.e., a retract of a transfinite composition of pushouts of maps in $\sO \comp \calm_{\cof}$.  Suppose also that $\sO_A$ is entrywise cofibrant in $\calm$.  Then $\sO_A \to \sO_B$ is entrywise a cofibration in $\calm$.
\item
Suppose $\sO$ is entrywise cofibrant in $\calm$, and suppose $A$ is an $(\sO \comp \calm_{\cof})$-cell complex, i.e., $\varnothing \to A \in \alg(\sO)$ is a relative $(\sO \comp \calm_{\cof})$-cell complex.  Then $\sO_A$ is entrywise cofibrant in $\calm$.
\end{enumerate}
\end{lemma}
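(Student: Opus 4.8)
The plan is to establish (1) first and then obtain (2) as the special case $A = \varnothing$. For (1), the heart is a single free-map pushout: suppose $\sO_A$ is entrywise cofibrant and $A \to A_\infty$ is a pushout of $\id \circ i : \sO \circ X \to \sO \circ Y$ for a cofibration $i$ in $\calm$ concentrated in a single color $b$. I claim that $\sO_A\singledbrc \to \sO_{A_\infty}\singledbrc$ is then entrywise a cofibration for all $d \in \fC$ and $[\uc] \in \sigmaofc$. For this I would invoke the filtration \eqref{o-ainfinity-colim} of Proposition \ref{o-ainfinity}, which exhibits this map as a transfinite composition of the maps $j_t$ in \eqref{osubjt}. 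Since pushouts and transfinite compositions in the groupoid-indexed diagram category $\calm^{\sigmabrcop \times \{d\}}$ are computed entrywise in $\calm$, it suffices to check that the left vertical map $\id \tensorover{\Sigma_t} i^{\boxprod t}$ in \eqref{osubjt} is entrywise a cofibration.

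This is exactly where $\clubcof$ enters. Fix $\uc' \in [\uc]$; the $\uc'$-entry of $\id \tensorover{\Sigma_t} i^{\boxprod t}$ is the map $W \tensorover{\Sigma_t} i^{\boxprod t}$, where $W = \sO_A\singledbrtbbrc(-;\uc') \in \calm^{\sigmaop_t}$ is the $\Sigma_t$-object obtained by restricting $\sO_A$ (Remark \ref{rk:xdac-restriction}); here I use that evaluating at $\uc'$ commutes with the coinvariants $\tensorover{\Sigma_t}$, as the latter is a colimit over the $\Sigma_t$-variable and colimits in $\calm^{\sigmabrcop}$ are entrywise. The underlying object of $W$ in $\calm$ is the value of $\sO_A$ at the profile $(tb,\uc')$ with output $d$, which is cofibrant by the entrywise cofibrancy hypothesis on $\sO_A$. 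Thus $W$ is cofibrant in $\calm$ in exactly the sense demanded by $\clubcof$, and since $i$ is a cofibration, $\clubcof$ yields that $W \tensorover{\Sigma_t} i^{\boxprod t}$ is a cofibration in $\calm$. As this holds for every $\uc'$, the map $\id \tensorover{\Sigma_t} i^{\boxprod t}$ is entrywise a cofibration, and the single-step claim follows. The crucial point is that entrywise cofibrancy of $\sO_A$ supplies precisely the \emph{underlying} cofibrancy of $W$ that $\clubcof$ needs, with no $\Sigma_t$-equivariant cofibrancy required; this is why we get an entrywise (rather than $\symseqcm$-level) cofibration, in contrast with Lemma \ref{5.44-1}.

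With the single step in hand, I would run a transfinite induction over the cells of the relative $(\sO \comp \calm_{\cof})$-cell complex $j : A \to B$, each of which is a pushout of a free map $\sO \circ i$ with $i$ concentrated in a single color. At each successor stage the single-step claim shows $\sO_{A_\alpha} \to \sO_{A_{\alpha+1}}$ is entrywise a cofibration, and since its source is entrywise cofibrant (by induction, starting from the given $\sO_A = \sO_{A_0}$) so is its target. At limit stages I would use Lemma \ref{osubdc}, so that $\sO_{(-)}\singledbrc$ preserves the relevant filtered colimits; evaluating entrywise, $\sO_{A_\lambda}$ is then a transfinite composite of cofibrations out of the cofibrant object $\sO_{A_0}$, hence entrywise cofibrant, and the induction proceeds. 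Consequently $\sO_A \to \sO_B$ is a transfinite composition of entrywise cofibrations, and a final retract argument (using functoriality of $\sO_{(-)}$ and closure of entrywise cofibrations under retracts) handles a general relative cell complex, proving (1).

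For (2), I would apply (1) to the relative $(\sO \comp \calm_{\cof})$-cell complex $\varnothing \to A$. Its source satisfies the hypothesis of (1) because $\sO_{\varnothing} \cong \sO$ in $\symseqcm$ by Proposition \ref{o-sub-empty}, and $\sO$ is entrywise cofibrant by assumption. Thus (1) gives that $\sO_{\varnothing} \to \sO_A$ is entrywise a cofibration; since $\sO_{\varnothing}$ is entrywise cofibrant, so is $\sO_A$, which is (2). The main obstacle is the single-step claim above, and in particular recognizing that the restricted $\Sigma_t$-object $W$ has cofibrant underlying object so that $\clubcof$ applies; the remaining work (entrywise computation of colimits, the transfinite induction maintaining entrywise cofibrancy, and the retract reduction) is routine.
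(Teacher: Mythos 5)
Your proposal is correct and follows essentially the same route as the paper: reduce by retracts and transfinite induction to a single pushout of a free map on a one-colored cofibration, apply the filtration of Proposition \ref{o-ainfinity} to write $\sO_A\duc \to \sO_B\duc$ as a countable composite of pushouts of maps $\sO_A(\vdots) \otimes_{\Sigma_t} i^{\boxprod t}$, and conclude via $\clubcof$ together with the entrywise cofibrancy of $\sO_A$; part (2) is then the case $\varnothing \to A$ using $\sO_\varnothing \cong \sO$. Your explicit bookkeeping that entrywise cofibrancy of $\sO_{A_\alpha}$ is propagated through the transfinite induction (and preserved at limit stages via Lemma \ref{osubdc}) is a point the paper leaves implicit, but it is the same argument.
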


\begin{proof}
Consider the first assertion.  Since entrywise cofibrations are closed under retracts and transfinite compositions, by a retract and transfinite induction argument, we may assume that $j$ is a pushout
\[
\nicexy{
\sO \comp X \ar[r]^-{f} \ar[d]_-{i_*}  
& A \ar[d]^-{j}\\
\sO \comp Y \ar[r] & B}
\]
in $\alg(\sO)$ for some cofibration $i : X \to Y \in \calm$.  Here we are regarding $i$ as a map of $\fC$-colored objects, both concentrated at the same single color.  We want to show that
\[
\sO_A\duc \to \sO_B\duc
\]
is a cofibration in $\calm$ for each $d \in \fC$ and each $\fC$-profile $\uc$.

By Proposition \ref{o-ainfinity} the map $\sO_A\duc \to \sO_B\duc$ is a countable composite of maps
\[
j_t : \sO_A^{t-1}\duc \to \sO_A^{t}\duc
\]
for $t \geq 1$, where $\sO_A^0 = \sO_A$.  Each map $j_t$ is a pushout in $\calm$ of a map of the form
\[
\sO_A(\vdots) \tensorover{\Sigma_t} i^{\boxprod t},
\]
where $\sO_A(\vdots)$ is an entry of $\sO_A$, and $i^{\boxprod t}$ is the $t$-fold iterated pushout product of $i : X \to Y$.

Since cofibrations are closed under countable compositions, it suffices to show that each $j_t$ is a cofibration in $\calm$.   As pushouts of cofibrations are cofibrations, this reduces to proving $\sO_A(\vdots) \otimes_{\Sigma_t}i^{\boxprod t}$ is a cofibration in $\calm$.  This last condition holds by $\clubcof$, the fact that $i$ is a cofibration in $\calm$, and the hypothesis that $\sO_A$ is entrywise cofibrant.  This proves the first assertion.

For the second assertion, apply (1) in the case $\varnothing \to A$, where $\varnothing$ is the initial $\sO$-algebra.  Part (1) is applicable because, by \eqref{o-sub-empty}, $\sO_\emptyset \cong \sO$ at each entry, and $\sO$ is entrywise cofibrant by assumption.  Part (1) guarantees that, at each $\duc$-entry,
\[
\sO\duc \cong \sO_{\emptyset}\duc \to \sO_A\duc
\]
is a cofibration in $\calm$.  This implies the cofibrancy of $\sO_A\duc$.
\end{proof}

The key step in the proof of Theorem \ref{middle-row-one-color} is the next observation.  It verifies condition $(*)$ in the semi-model structure existence theorem in \cite{fresse} (\ref{thm:fresse-semi-existence}).

\begin{proposition}
\label{1-colored-5.44}
Suppose $\calm$ is a monoidal model category satisfying $(\clubsuit)$, and $\sO$ is an entrywise cofibrant $\fC$-colored operad in $\calm$.  Suppose
\[
\nicexy{
\sO \comp X \ar[d]_-{\sO \comp i} \ar[r] 
& A \ar[d]^-{j}
\\
\sO \comp Y \ar[r] & A_{\infty}
}\]
is a pushout in $\alg(\sO)$ such that 
\begin{itemize}
\item
$i : X \to Y \in \calm$ is a (trivial) cofibration, regarded as a map concentrated at a single color, and 
\item
$A$ is an $(\sO \comp \calm_{\cof})$-cell complex.  
\end{itemize}
Then the underlying map of $j$ is entrywise a (trivial) cofibration.
\end{proposition}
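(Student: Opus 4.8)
The plan is to run the colored filtration \eqref{aoycolim} on the underlying map of $j$ in $\calmc$ and to verify that each stage is a (trivial) cofibration, feeding the entrywise cofibrancy of $\sO_A$ into the hypothesis $(\clubsuit)$. As is standard I would treat the cofibration case explicitly, the trivial cofibration case being word-for-word identical with $\clubcof$ replaced by $\clubacof$ and ``cofibration'' by ``trivial cofibration'' throughout.

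First I would secure the cofibrancy input on $\sO_A$. Since $i$ is already assumed concentrated at a single color, no preliminary reduction to generating cofibrations is necessary. Because $\sO$ is entrywise cofibrant and $A$ is an $(\sO \comp \calm_{\cof})$-cell complex, Lemma \ref{middle-row-lemma}(2) applies and shows that $\sO_A$ is entrywise cofibrant in $\calm$; in particular, for every $d \in \fC$ and every $t \geq 1$ the component $\sO_A\singledbrtc$ is an underlying cofibrant object of $\calm^{\sigmaop_t}$.

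Next, again using that $i$ is concentrated at a single color, Proposition \ref{colored_jpaa7.12} exhibits the underlying $\fC$-colored object of $A_\infty$ as the sequential colimit of $A = A_0 \to A_1 \to \cdots$, where for each color $d$ and each $t \geq 1$ the map $j_t \colon (A_{t-1})_d \to (A_t)_d$ sits in the pushout square \eqref{one-colored-jt-pushout}. Since (trivial) cofibrations are closed under transfinite composition and under pushout, it is enough to prove that the left-hand vertical map $\id \tensorover{\Sigma_t} i^{\boxprod t}$ of \eqref{one-colored-jt-pushout} is a (trivial) cofibration in $\calm$ for every $d$ and every $t \geq 1$.

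Finally I would identify this map as the value of the functor $\sO_A\singledbrtc \tensorover{\Sigma_t} (-)^{\boxprod t}$ on $i$: the iterated pushout product $i^{\boxprod t} \colon Q^{t}_{t-1} \to Y^{\otimes t}$ is precisely the $\Sigma_t$-equivariant map produced from $i$ by $(-)^{\boxprod t}$. With $\sO_A\singledbrtc$ underlying cofibrant from the previous step and $i$ a (trivial) cofibration, $(\clubsuit)$ delivers directly that $\id \tensorover{\Sigma_t} i^{\boxprod t}$ is a (trivial) cofibration, which finishes the argument. I expect the only genuine content, as opposed to bookkeeping, to be the cofibrancy of $\sO_A$ furnished by Lemma \ref{middle-row-lemma}(2); once that is in hand, the filtration and the definition of $(\clubsuit)$ do all the remaining work.
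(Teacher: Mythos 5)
Your proposal is correct and follows essentially the same route as the paper's proof: reduce via the filtration \eqref{one-colored-jt-pushout} to the maps $\id \tensorover{\Sigma_t} i^{\boxprod t}$, obtain entrywise cofibrancy of $\sO_A$ from Lemma \ref{middle-row-lemma}(2), and conclude by $(\clubsuit)$. The only difference is cosmetic ordering of the steps.
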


\begin{proof}
Cofibrations (resp., trivial cofibrations) are closed under pushouts and  transfinite compositions.  So by the filtration \eqref{one-colored-jt-pushout} , it is enough to show that the map
\[
\nicexy{
\sO_A(\vdots) \tensorover{\Sigma_t} Q^t_{t-1}
\ar[r]^-{\id \tensorover{\Sigma_t} i^{\boxprod t}}
& 
\sO_A(\vdots) \tensorover{\Sigma_t} Y^{\otimes t}
}\]
is a (trivial) cofibration in $\calm$ for each $t \geq 1$.  By Lemma \ref{middle-row-lemma}(2), $\sO_A(\vdots)$ is cofibrant in $\calm$.  So $(\clubsuit) = \clubcof + \clubacof$ says that $\sO_A(\vdots) \otimes_{\Sigma_t} (-)^{\boxprod t}$ preserves (trivial) cofibrations.  Since $i$ is a (trivial) cofibration, this finishes the proof.
\end{proof}

\begin{proof}[Proof of Theorem \ref{middle-row-one-color}]
We will employ the semi-model structure existence result \cite{fresse} (\ref{thm:fresse-semi-existence}) to the free-forgetful adjunction
\[
\nicexy{
\calm \ar@<2pt>[r]^-{\sO \comp -} 
& \alg(\sO). \ar@<2pt>[l]
}\]
Let us now check the conditions needed to apply Theorem \ref{thm:fresse-semi-existence}.
\begin{enumerate}
\item
$\alg(\sO)$ has all small limits and colimits by \cite{harper-jpaa} (5.16 and 5.19).
\item
The forgetful functor $\alg(\sO) \to \calm$ preserves colimits over non-empty ordinals by \cite{harper-jpaa} (5.15).
\item
The condition $(*)$ in Theorem \ref{thm:fresse-semi-existence} holds by Proposition \ref{1-colored-5.44}.
\end{enumerate}
All the conditions in Theorem \ref{thm:fresse-semi-existence} have now been checked, so we have proved all the assertions except for the last one about cofibrant $\sO$-algebras being underlying cofibrant.

Write $\emptyo$ for the initial $\sO$-algebra.  For statement (2), suppose $B \in \alg(\sO)$ is cofibrant, so $\iota : \emptyo \to B$ is a cofibration in $\alg(\sO)$.  We want to show that the underlying object of $B$ is entrywise cofibrant.  Since $\emptyo$, being the initial object, is a cofibrant $\sO$-algebra, statement (1) implies that the underlying map of $\iota$ is entrywise a cofibration.  So it suffices to show that $\emptyo$ is entrywise cofibrant.  This is true because
\[
(\emptyo)_d = \sO\singledempty,
\]
which is cofibrant by assumption.
\end{proof}

\begin{remark}
The proof method of this theorem demonstrates that the precise hypothesis required in order to obtain a semi-model structure on $\alg(\sO)$ using the filtration of Section \ref{sec:alg-over-colored} is that for all $\sO$-algebras $A$ and all generating trivial cofibrations $i$ of $\M$, the maps $\sO_A\smallbinom{d}{[tc]} \otimes_{\Sigma_t} i^{\boxprod t}$ are all trivial cofibrations for all $t \geq 1$ and $c \in \fC$. In the case of $1$-colored operads, this hypothesis was introduced in \cite{white-thesis} as the \textit{$\sO$-algebra axiom}. In this paper we have preferred to work with $(\spadesuit)$ and $(\clubsuit)$ because they are easier to check in practice, due to the complexity of the formulas for $\sO_A$.
\end{remark}

\subsection{$\Sigma$-Cofibrant Colored Operads}
\label{subsec:sigma-cof-admissible}

The next result says that, if we do not impose any extra cofibrancy condition, such as  $(\spadesuit)$ or $(\clubsuit)$, on $\calm$, then $\Sigma$-cofibrant colored operads are \emph{semi-admissible} in the sense that the category of algebras admits a suitable semi-model structure.

\begin{theorem}
\label{sigmacof-alg-semi}
Suppose $\calm$ is a cofibrantly generated monoidal model category.  Suppose $\sO$ is a $\fC$-colored operad such that $\sO \in \symseqcm$ is cofibrant.  Then $\alg(\sO)$ is a cofibrantly generated semi-model category over $\calmc$ such that the weak equivalences and fibrations are created in $\calmc$.  Moreover:
\begin{enumerate}
\item
If $j : A \to B \in \alg(\sO)$ is a cofibration with $A$ cofibrant in $\alg(\sO)$, then the underlying map $j \in \calmc$ is a cofibration.
\item
Every cofibrant $\sO$-algebra is cofibrant in $\calmc$.
\end{enumerate}
\end{theorem}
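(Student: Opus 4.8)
The plan is to apply the Semi-Model Category Existence Theorem \ref{thm:fresse-semi-existence} to the free--forgetful adjunction
\[
\nicexy{
\calmc \ar@<2pt>[r]^-{\sO \comp -}
& \alg(\sO) \ar@<2pt>[l]
}\]
taking $\calmc$ as the base category. Since $\calm$ is cofibrantly generated, so is $\calmc = \prod_{\fC}\calm$, with generating (trivial) cofibrations concentrated in a single color by \cite{hirschhorn} (11.1.10); this supplies the cofibrantly generated model category required as input. The resulting semi-model structure on $\alg(\sO)$ then has weak equivalences and fibrations created in $\calmc$ by construction, with generating (trivial) cofibrations $\sO \comp \sI$ (resp., $\sO \comp \sJ$).

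Next I would verify the standing hypotheses of Definition \ref{defn:semi-model-cat} together with condition $(*)$ of Theorem \ref{thm:fresse-semi-existence}. Bicompleteness of $\alg(\sO)$ and the fact that the forgetful functor preserves filtered colimits---hence colimits over non-empty ordinals, which is all that is needed to run the cell-complex arguments---both come directly from Proposition \ref{algebra-bicomplete}. The crucial point is condition $(*)$, which for the adjunction above asks precisely that, for every pushout of $\sO \comp i$ with $i$ a (trivial) cofibration in $\calmc$ and with top-left corner an $\left(\sO \comp \calmc_{\cof}\right)$-cell complex, the underlying map of the resulting pushout corner is a (trivial) cofibration in $\calmc$. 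This is exactly the content of Proposition \ref{colored-5.44}, whose three hypotheses match ours verbatim: $i$ is a (trivial) cofibration, the domain is an $\left(\sO \comp \calmc_{\cof}\right)$-cell complex, and $\sO \in \symseqcm$ is cofibrant (our standing assumption). Thus $(*)$ holds and Theorem \ref{thm:fresse-semi-existence} supplies the cofibrantly generated semi-model structure.

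It remains to prove the two ``moreover'' statements. Part (1) is immediate: Theorem \ref{thm:fresse-semi-existence} already asserts that the forgetful functor $\alg(\sO) \to \calmc$ sends cofibrations with cofibrant domain to cofibrations. For part (2), I would argue as in Proposition \ref{underlying-cof}(2): if $B$ is a cofibrant $\sO$-algebra then $\emptyo \to B$ is a cofibration with cofibrant domain (the initial object being cofibrant by axiom), so part (1) makes its underlying map a cofibration in $\calmc$, whence it suffices to see that $\emptyo \in \calmc$ is cofibrant; by \eqref{initial-o-algebra} its $d$-colored entry is $\sO\singledempty$, which is cofibrant because the cofibrancy of $\sO$ in $\symseqcm$ gives cofibrancy of each component $\sO\singledbrc$, in particular for $\uc = \varnothing$. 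The genuine obstacle has in fact already been absorbed into Proposition \ref{colored-5.44} (which rests on the filtration \eqref{aoycolim} and the cofibrancy propagation of Lemma \ref{5.44-1}); once condition $(*)$ is identified with that proposition, the present proof is a straightforward assembly of prior results.
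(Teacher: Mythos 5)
Your proposal is correct and follows essentially the same route as the paper: both invoke Theorem \ref{thm:fresse-semi-existence} for the free--forgetful adjunction, check bicompleteness and preservation of sequential colimits via Proposition \ref{algebra-bicomplete}, identify condition $(*)$ with Proposition \ref{colored-5.44}, read off statement (1) from the conclusion of the existence theorem, and prove statement (2) by the argument of Proposition \ref{underlying-cof}(2) using the cofibrancy of $\sO\singledempty$. No gaps.
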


\begin{proof}
We will employ the semi-model structure existence result \cite{fresse} (\ref{thm:fresse-semi-existence}) to the adjunction
\[
\nicexy{
\calmc \ar@<2pt>[r]^-{\sO \comp -} 
& \alg(\sO) \ar@<2pt>[l]
}\]
in \eqref{free-algebra-adjoint}, where $\calmc$ is a cofibrantly generated model category \cite{hirschhorn} (11.1.10).  Let us now check the conditions needed to apply Theorem \ref{thm:fresse-semi-existence}.
\begin{enumerate}
\item
$\alg(\sO)$ has all small limits and colimits by Proposition \ref{algebra-bicomplete}.
\item
The forgetful functor $\alg(\sO) \to \calmc$ preserves colimits over non-empty ordinals also by Proposition \ref{algebra-bicomplete}.
\item
The condition $(*)$ in Theorem \ref{thm:fresse-semi-existence} holds by Proposition \ref{colored-5.44}.
\end{enumerate}
All the conditions in Theorem \ref{thm:fresse-semi-existence} have now been checked, so we have proved all the assertions except for the last one about cofibrant $\sO$-algebras being underlying cofibrant.  This assertion is proved by exactly the same argument as in Proposition \ref{underlying-cof} (2).
\end{proof}

\section{Preservation of Algebras Under Bousfield Localization}
\label{sec:preservation}

In this section we provide an application of the (semi-)model structures of the previous section to the problem of preservation of algebraic structure under Bousfield localization.  We will prove a general preservation result, Theorem \ref{thm:general-preservation}, for algebras over a monad under Bousfield localization.  Then we will use this preservation result and the (semi-)model category existence results from the previous section to obtain preservation result for algebras over a colored operad under Bousfield localization.

\subsection{Bousfield Localization}
\label{subsec:bous-loc}

Let us first remind the reader about the process of Bousfield localization as discussed in \cite{hirschhorn}. This is a general machine that
starts with a (nice) model category $\calm$ and a set of morphisms $\C$ and produces a new model structure $L_\C(\calm)$ on the same category in which maps in $\C$ are now weak equivalences.  Roughly speaking, in going from $\M$ to $L_\C(\M)$, we keep the cofibrations the same and add more weak equivalences.

Furthermore, this is done in a universal way, introducing the smallest number of new weak equivalences possible. When we say Bousfield localization we will always mean \emph{left} Bousfield localization. So the cofibrations in $L_\C(\calm)$ will be the same as the cofibrations in $\calm$.

Bousfield localization proceeds by first constructing the fibrant objects of $L_\C(\calm)$ and then constructing the weak equivalences. In both cases this is done via simplicial mapping spaces $\map(-,-)$. If $\calm$ is a simplicial or topological model category then one can use the hom-object in $sSet$ or $Top$. Otherwise a framing is required to construct the simplicial mapping space. We refer the reader to \cite{hovey} or \cite{hirschhorn} for details on this process.

An object $N$ is said to be \textit{$\C$-local} if it is fibrant in $\calm$ and if for all $g:X\to Y$ in $\C$, the map 
\[
\nicexy@C+.5cm{\map(Y,N)\ar[r]^-{\map(g,N)} & \map(X,N)}
\]
is a weak equivalence in $sSet$. These objects are precisely the fibrant objects in $L_\C(\calm)$. A map $f:A\to B$ is a \textit{$\C$-local equivalence} if for all $\C$-local objects $N$, the map
\[
\nicexy@C+.5cm{\map(B,N)\ar[r]^-{\map(f,N)} & \map(A,N)}
\]
is a weak equivalence. These maps are precisely the weak equivalences in $L_\C(\calm)$. A $\C$-local equivalence between $\C$-local objects is a weak equivalence in $\M$.

\begin{remark}
\label{remark:loc-wrt-cofibrations}
Throughout this paper we assume $\C$ is a set of cofibrations between cofibrant objects. This can always be guaranteed in the following way. For any map $f$, let $Qf$ denote the cofibrant replacement, and let $\tilde{f}$ denote the left factor in the cofibration-trivial fibration factorization of $Qf$. Then $\tilde{f}$ is a cofibration between cofibrant objects and we may define $\tilde{\C} = \{\tilde{f}\;|\; f\in \C\}$. Localization with respect to $\tilde{\C}$ yields the same result as localization with respect to $\C$, so our assumption that the maps in $\C$ are cofibrations between cofibrant objects loses no generality.
\end{remark}

\emph{We also assume everywhere that the model category $L_\C(\calm)$ exists and is cofibrantly generated.} This can be guaranteed by assuming $\M$ is left proper and either combinatorial (as discussed in \cite{barwickSemi}) or cellular (as discussed in \cite{hirschhorn}). A model category is \textit{left proper} if pushouts of weak equivalences along cofibrations are again weak equivalences. We will make this a standing hypothesis on $\calm$. However, as we have not needed the cellularity or combinatoriality assumptions for our work we have decided not to assume them. In this way if a Bousfield localization is known to exist for some reason other than the theory in \cite{hirschhorn} then our results will be applicable.

On the model category level the functor $L_\C$ is the identity. So when we write $L_\C$ as a functor we shall mean the composition of derived functors
\[
\nicexy{\Ho(\M)\ar[r] & \Ho(L_\C(\M)) \ar[r] & \Ho(\M)},
\]
i.e., $E\to L_\C(E)$ is the unit map of the adjunction $\Ho(\M) \rightleftarrows \Ho(L_\C(\M))$. In particular, for any $E$ in $\M$, $L_\C(E)$ is weakly equivalent to $R_\C Q E$, where $R_\C$ is a choice of fibrant replacement in $L_\C(\M)$ and $Q$ is a cofibrant replacement in $\M$.

\subsection{Preservation of  Monadic  Algebras}

Note that every monad on $\M$ is also a monad on $L_\C(\M)$ because  it has the same underlying category.  If $\M$ is a monoidal category, then so is $L_\C(\M)$, and a colored operad on $\M$ is also a colored operad on $L_\C(\M)$.

\begin{definition}
Assume that $\M$ and $L_\C(\M)$ are model categories, and $T$ is a monad on $\M$. Then $L_\C$ is said to \textit{preserve $T$-algebras} if the following two statements hold.

\begin{enumerate}
\item When $E$ is a $T$-algebra, there is some $T$-algebra $\tilde{E}$ which is weakly equivalent in $\M$ to $L_\C(E)$.

\item In addition, when $E$ is a cofibrant $T$-algebra, then there is a natural choice of $\tilde{E}$ and a lift of the localization map $E\to L_\C(E)$ to a $T$-algebra homomorphism $E\to \tilde{E}$.
\end{enumerate}
If $\M$ and $L_\C(\M)$ are monoidal model categories and if $\sO$ is a $\fC$-colored operad on $\M$, say that $L_\C$ \emph{preserves $\sO$-algebras} if it preserves $T$-algebras for the free $\sO$-algebra monad $T = \sO \circ -$.
\end{definition}

\begin{remark}
The notion of preservation was also considered in \cite{CGMV}, but only for cofibrant $E$. A more general notion of preservation was considered in \cite{casacuberta-raventos-tonks} in the setting of homotopical categories and general monads, but because of the generality the conditions for preservation presented in \cite{casacuberta-raventos-tonks} are different from ours. When \cite{casacuberta-raventos-tonks} specializes to operads, only simplicial operads are considered.
\end{remark}

We are now ready to state our general preservation result. This is a generalization of Theorem 3.2 and Corollary 3.4 of \cite{white-localization} from the setting of operads to the setting of monads.

\begin{theorem} \label{thm:general-preservation}
Let $\M$ be a model category such that the Bousfield localization $L_\C(\M)$ exists and is a model category. Let $T$ be an monad on $\M$. If the categories of $T$-algebras in $\M$ and in $L_\C(\M)$ inherit projective semi-model structures from $\M$ and $L_\C(\M)$, then $L_\C$ preserves $T$-algebras.
\end{theorem}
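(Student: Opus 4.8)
The plan is to exploit that $T$ is simultaneously a monad on $\M$ and on $L_\C(\M)$, and that the two projective semi-model structures on $T$-algebras share the same cofibrations. First I would record this comparison. Since $\M$ and $L_\C(\M)$ have the same cofibrations, they have the same trivial fibrations (the maps with the right lifting property against all cofibrations); as the forgetful functor $U$ creates fibrations and weak equivalences, the categories $\Alg_T(\M)$ and $\Alg_T(L_\C\M)$ have the same trivial fibrations, hence the same cofibrations and the same cofibrant objects. Because every weak equivalence of $\M$ is a weak equivalence of $L_\C(\M)$, every trivial cofibration of $\Alg_T(\M)$ is one of $\Alg_T(L_\C\M)$, so the identity functor $\Alg_T(\M)\to\Alg_T(L_\C\M)$ preserves cofibrations and trivial cofibrations.

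Next I would establish the elementary fact that, for every object $X\in\M$, the fibrant replacement $R_\C X$ in $L_\C(\M)$ satisfies $R_\C X\simeq_\M L_\C X$. Indeed, writing $QX\xrightarrow{\sim}X$ for a cofibrant replacement in $\M$ (also a $\C$-local equivalence), functoriality of fibrant replacement together with the 2-out-of-3 axiom in $L_\C(\M)$ shows that $R_\C QX\to R_\C X$ is a $\C$-local equivalence between $\C$-local objects, hence an $\M$-weak equivalence; since $L_\C X\simeq_\M R_\C QX$ by definition, this gives $R_\C X\simeq_\M L_\C X$.

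Now for part (1): given a $T$-algebra $E$, I would take a cofibrant replacement $Q^T E\xrightarrow{\sim_\M}E$ in $\Alg_T(\M)$ and then a fibrant replacement $Q^T E\to \tilde E:=R^T_\C Q^T E$ in $\Alg_T(L_\C\M)$, which exists because $Q^T E$ is cofibrant there. Applying $U$ yields $U\tilde E$ fibrant in $L_\C(\M)$ (hence $\C$-local) together with a $\C$-local equivalence $U Q^T E\to U\tilde E$, so $U\tilde E$ is a fibrant replacement of $U Q^T E$ in $L_\C(\M)$. Chaining the previous paragraph with $U Q^T E\simeq_\M UE$ then gives $U\tilde E\simeq_\M R_\C(U Q^T E)\simeq_\M L_\C(U Q^T E)\simeq_\M L_\C(UE)$, as required.

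For part (2), if $E$ is cofibrant in $\Alg_T(\M)$ it is cofibrant in $\Alg_T(L_\C\M)$, so I would set $\tilde E:=R^T_\C E$, using the functorial factorization of the map to the terminal object (available on cofibrant domains in a semi-model category) to make $\tilde E$ natural in $E$. The structure map of this fibrant replacement is a $T$-algebra homomorphism $E\to\tilde E$ whose underlying map $UE\to U\tilde E$ is a $\C$-local equivalence into a $\C$-local object; such a map represents the localization unit $UE\to L_\C(UE)$ in $\Ho(\M)$, so it is the desired lift. The main obstacle is precisely the comparison in part (1): because $T$ is an arbitrary monad, $U$ need not carry cofibrant $T$-algebras to cofibrant objects of $\M$, so $U Q^T E$ may fail to be cofibrant, and the fact that $R_\C X\simeq_\M L_\C X$ for \emph{all} $X$ is exactly what lets the argument go through regardless.
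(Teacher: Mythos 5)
Your proof is correct and follows essentially the same strategy as the paper's: take $\tilde{E} = R^T_\C Q^T E$ and identify its underlying object with $R_\C Q(UE)$ by a chain of $\C$-local equivalences between $\C$-local objects, which are therefore weak equivalences in $\M$. The one point where you are more careful than the paper is in justifying that $Q^T E$ is cofibrant in $\Alg_T(L_\C(\M))$ (via the coincidence of trivial fibrations, hence of cofibrations, in the two projective semi-model structures), a fact the paper uses implicitly when it applies $R_{\C,T}$ to $Q_T E$.
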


\begin{proof}
Let $\alg_{L_\C(\M)}(T)$ and $\alg_\M(T)$ denote the semi-model categories of $T$-algebras in $L_\C(\M)$ and $\M$, respectively. Let $R_\C$ denote fibrant replacement in $L_\C(\M)$, let $R_{\C,T}$ denote fibrant replacement in $\alg_{L_\C(\M)}(T)$, and let $Q_T$ denote cofibrant replacement in $\alg_{\M}(T)$. Due to the fact that $\alg_{L_\C(\M)}(T)$ is only a semi-model category, we shall only apply $R_{\C,T}$ to objects which are cofibrant in $\alg_{L_\C(\M)}(T)$. We will prove the first form of preservation and our method of proof will allow us to deduce the second form of preservation in the special case where $E$ is a cofibrant $T$-algebra.

In our proof, $\tilde{E}$ will be $R_{\C,T}Q_T(E)$. Because $Q$ is the left derived functor of the identity adjunction between $\M$ and $L_\C(\M)$, and $R_\C$ is the right derived functor of the identity, we know that $L_\C(E) \simeq R_\C Q(E)$ in $\M$. We must therefore show
\[
R_\C Q(E)\simeq R_{\C,T}Q_T(E)
\]
in $\M$.

The map $Q_T E\to E$ is a trivial fibration in $\alg_{\M}(T)$, hence in $\M$ by the definition of the projective semi-model structure. The map $QE\to E$ is also a weak equivalence in $\M$. Consider the following lifting diagram in $\M$:
\begin{align} \label{diagram-cof-rep-lift}
\nicexy{\emptyset \ar[r] \ar@{^{(}->}[d] & Q_T E \ar@{->>}[d]^\simeq \\
QE \ar[r] \ar@{..>}[ur] & E}
\end{align}
The lifting axiom gives the dotted map $QE\to Q_T E$, and it is necessarily a weak equivalence in $\M$ by the 2-out-of-3 property.

Since $Q_T E$ is a $T$-algebra in $\M$ it must also be a $T$-algebra in $L_\C(\M)$. We may therefore construct the following lift in $L_\C(\M)$:
\begin{align*}
\nicexy{Q_T E \ar@{^{(}->}[d] \ar[r] & R_{\C,T} Q_T E \ar@{->>}[d]\\
R_\C Q_T E \ar[r] \ar@{..>}[ur] & \ast}
\end{align*}
In this diagram the left vertical map is a weak equivalence in $L_\C(\M)$, and the top horizontal map is a weak equivalence in $\alg_{L_\C(\M)}(T)$. Because the model category $\alg_{L_\C(\M)}(T)$ inherits weak equivalences from $L_\C(\M)$, the top horizontal map is a weak equivalence in $L_\C(\M)$. Therefore, by the 2-out-of-3 property, the dotted lift is a weak equivalence in $L_\C(\M)$. We make use of this map as the horizontal map in the lower right corner of the diagram below.

The top horizontal map $QE \to Q_T E$ in the following diagram is the first map we constructed in \eqref{diagram-cof-rep-lift}, which was proven to be a weak equivalence in $\M$. The square in the diagram below is then obtained by applying $R_\C$ to that map. In particular, $R_\C QE \to R_\C Q_T E$ is a weak equivalence in $L_\C(\M)$:
\begin{align*}
\nicexy{QE \ar[r] \ar[d] & Q_T E\ar[d] & \\ R_\C QE \ar[r] & R_\C Q_T E \ar[r] & R_{\C,T}Q_T E}
\end{align*}
We have shown that both of the bottom horizontal maps are weak equivalences in $L_\C(\M)$. Thus, by the 2-out-of-3 property, their composite $R_\C QE \to R_{\C,T}Q_TE$ is a weak equivalence in $L_\C(\M)$. All the objects in the bottom row are fibrant in $L_\C(\M)$, so these $\C$-local equivalences are actually weak equivalences in $\M$.

As $E$ was a $T$-algebra and $Q_T$ and $R_{\C,T}$ are endofunctors on categories of $T$-algebras, it is clear that $R_{\C,T}Q_TE$ is a $T$-algebra. We have just shown that $L_\C(E)$ is weakly equivalent to this $T$-algebra in $\M$, so we are done.

We turn now to the case where $E$ is assumed to be a cofibrant $T$-algebra. We have seen that there is an $\M$-weak equivalence $R_\C QE \to R_{\C,T}Q_T E$, and above we took $R_{\C,T}Q_T E$ in $\M$ as our representative for $L_\C(E)$ in $\Ho(\M)$. Because $E$ is a cofibrant $T$-algebra, the fibrant replacement $R_{\C,T}E$ exists in $\alg_{L_\C(\M)}(T)$. Furthermore, there are weak equivalences $E \leftrightarrows Q_T(E)$ in $\alg_{L_\C(\M)}(T)$ because all cofibrant replacements of a given object are weakly equivalent, e.g. by diagram (\ref{diagram-cof-rep-lift}). So passage to $Q_T(E)$ is unnecessary when $E$ is cofibrant, and we take $R_{\C,T} E$ as our representative for $L_\C(E)$. We may then lift the localization map $E\to L_\C(E)$ in $\Ho(\M)$ to the fibrant replacement map $E \to R_{\C,T} E$ in $\M$. As this fibrant replacement is taken in $\alg_{L_\C(\M)}(T)$, this map is a $T$-algebra homomorphism, as desired. Naturality follows from the functoriality of fibrant replacement.
\end{proof}

\subsection{Monoidal Bousfield Localization}

Shortly, we will apply this theorem to monads $T$ arising from the free $\sO$-algebra functor for $\fC$-colored operads $\sO$ in $\M$. However, to use the existence results from the previous section, we first need a way to check the hypothesis that $L_\C(\M)$ is a monoidal model category.  Recall that we always assume that $L_\C(\M)$ exists and is a cofibrantly generated model category.  So what we need is a checkable condition for when $L_\C(\M)$ satisfies the pushout product axiom.  Such a condition is given in the following definition taken from \cite{white-localization}.

\begin{definition}
A Bousfield localization $L_\C$ is said to be a \emph{monoidal Bousfield
localization} if $L_\C(\M)$ satisfies the pushout product axiom, the unit axiom (i.e., for any cofibrant $X$, the map $QI \otimes X \to I \otimes X \cong X$ is a weak
equivalence), and the axiom that cofibrant objects are flat.
\end{definition}

The following theorem, which will \emph{not} be used below, is proven in Theorem 4.6 in \cite{white-localization}.  It gives a checkable condition for a monoidal Bousfield localization, in particular the pushout product axiom in $L_\C(\M)$.  Although we will not use it in this paper, it is stated here to emphasize that being a monoidal Bousfield localization is indeed checkable in practice.

\begin{theorem} \label{thm:PPAxiom-nontractable}
Suppose $\M$ is a cofibrantly generated monoidal model category in which cofibrant objects are flat. Then $L_\C$ is a monoidal Bousfield localization if and only if every map of the form $f \otimes id_K$, where $f$ is in $\C$ and $K$ is cofibrant, is a $\C$-local equivalence.
\end{theorem}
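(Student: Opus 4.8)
The plan is to prove both implications, isolating what is automatic from what requires real work. Since left Bousfield localization leaves the cofibrations unchanged, the first half of the pushout product axiom---that $f \boxprod g$ is a cofibration whenever $f$ and $g$ are---holds in $L_\C(\M)$ for free, because it already holds in $\M$. The unit axiom is inherited for a similar reason: because cofibrant objects are flat in $\M$, applying Definition \ref{defn:cof-obj-flat} to the weak equivalence $QI \to I$ shows that $QI \otimes X \to I \otimes X \cong X$ is a weak equivalence in $\M$ for every cofibrant $X$, hence a $\C$-local equivalence, so the unit axiom holds in $L_\C(\M)$ too. Thus being a monoidal Bousfield localization reduces to two statements about the new weak equivalences: that cofibrant objects are flat in $L_\C(\M)$, and that the second half of the pushout product axiom holds there. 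The forward implication is then immediate: if $L_\C$ is a monoidal Bousfield localization, then for $f \in \C$ (a $\C$-local equivalence) and $K$ cofibrant, flatness in $L_\C(\M)$ gives that $f \otimes \id_K$ is a $\C$-local equivalence.

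For the converse I would first establish flatness in $L_\C(\M)$. The key observation is that for each cofibrant $K$ the functor $- \otimes K \colon \M \to L_\C(\M)$ is left Quillen: it preserves cofibrations by the pushout product axiom in $\M$, and preserves trivial cofibrations since weak equivalences of $\M$ are $\C$-local equivalences. By the universal property of left Bousfield localization (Hirschhorn's criterion, \cite{hirschhorn}), such a functor descends to a left Quillen endofunctor of $L_\C(\M)$ precisely when it carries each map of $\C$ to a $\C$-local equivalence---which is exactly the standing hypothesis that $f \otimes \id_K$ is a $\C$-local equivalence. Ken Brown's lemma then shows $- \otimes K$ preserves $\C$-local equivalences between cofibrant objects, and a cofibrant-replacement-plus-$2$-out-of-$3$ argument (comparing $g \otimes \id_K$ with $Qg \otimes \id_K$ along the horizontal maps $(QA \to A)\otimes \id_K$, which are weak equivalences in $\M$ by flatness in $\M$) upgrades this to: $g \otimes \id_K$ is a $\C$-local equivalence for every $\C$-local equivalence $g$ and every cofibrant $K$. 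The same bootstrap, now replacing an arbitrary object $Y$ by its cofibrant replacement, shows moreover that $f \otimes \id_Y$ is a $\C$-local equivalence for every $\C$-local equivalence $f$ between cofibrant objects and every object $Y$ whatsoever.

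With flatness in hand I would verify the remaining pushout product statement on generators, using Remark \ref{remark-check-pp-ax-on-gen}: it suffices to show $f \boxprod g$ is a trivial cofibration in $L_\C(\M)$ when $f$ runs over a generating set of trivial cofibrations and $g$ over the generating cofibrations of $\M$. For $f$ a generating trivial cofibration of $\M$ this is the pushout product axiom of $\M$. The new generating trivial cofibrations of $L_\C(\M)$ can be taken to be cofibrations between cofibrant objects, since they arise as pushout products of the maps of $\C$ (cofibrations between cofibrant objects by Remark \ref{remark:loc-wrt-cofibrations}) with the framing cofibrations $\partial\Delta[n] \to \Delta[n]$, so it remains to treat such an $f \colon X_0 \to X_1$ against an arbitrary generating cofibration $g \colon Y_0 \to Y_1$. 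Here I would present $f \boxprod g$ as the map induced on horizontal pushouts by the map of spans
\[
\bigl(X_0 \otimes Y_1 \xleftarrow{\id_{X_0}\otimes g} X_0 \otimes Y_0 \xrightarrow{f \otimes \id_{Y_0}} X_1 \otimes Y_0\bigr)
\longrightarrow
\bigl(X_1 \otimes Y_1 \xleftarrow{\id_{X_1}\otimes g} X_1 \otimes Y_0 \xrightarrow{\id} X_1 \otimes Y_0\bigr)
\]
whose three vertical components are $f \otimes \id_{Y_1}$, $f \otimes \id_{Y_0}$, and the identity. The outer two are $\C$-local equivalences by the extended flatness of the previous paragraph (note that $Y_0,Y_1$ need not be cofibrant, which is exactly why that extension is needed), the legs $\id_{X_i}\otimes g$ are cofibrations by the pushout product axiom in $\M$, and the right-hand pushout is $X_1 \otimes Y_1$. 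Since $L_\C(\M)$ is left proper (a left Bousfield localization of the left proper $\M$), the gluing lemma \cite{hirschhorn} yields that the induced map on pushouts, namely $f \boxprod g$, is a $\C$-local equivalence; being also a cofibration, it is a trivial cofibration.

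The main obstacle is the cofibrant-domain problem in the last step: the generating cofibrations $g$ of a general monoidal model category need not have cofibrant (co)domains, so one cannot directly invoke flatness of $L_\C(\M)$ to conclude that $f \otimes \id_{Y_i}$ is a $\C$-local equivalence. The resolution, and the technical heart of the argument, is the strengthened flatness statement for $f \otimes \id_Y$ with $Y$ arbitrary (valid because $f$ is a $\C$-local equivalence between cofibrant objects), together with the left properness that makes the gluing lemma available. A secondary subtlety is the identification of the generating trivial cofibrations of $L_\C(\M)$ as cofibrations between cofibrant objects; if one prefers not to rely on the explicit construction via framings, one may instead route the whole argument through the characterization of $\C$-local equivalences by the mapping spaces $\map(-,N)$ into $\C$-local objects $N$, using that $\Hom(K,N)$ is again $\C$-local whenever $K$ is cofibrant---itself a reformulation of the hypothesis---exactly as in \cite{white-localization}.
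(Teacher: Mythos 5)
First, a point of reference: the paper does not actually prove this theorem. It is quoted from Theorem 4.6 of \cite{white-localization}, and the paper explicitly notes that it will not be used in what follows, so there is no in-paper proof to compare against. Judged on its own, your argument is essentially sound and takes a genuinely different route from the published one. The proof in \cite{white-localization} runs everything through simplicial mapping spaces: the hypothesis is converted into the statement that $\Hom(K,N)$ is $\C$-local whenever $K$ is cofibrant and $N$ is $\C$-local, and both flatness of cofibrant objects in $L_\C(\M)$ and the acyclicity half of the pushout product axiom are then extracted by adjunction. You instead obtain flatness from the universal property of localization applied to the left Quillen functors $-\otimes K$, Ken Brown's lemma, and a cofibrant-replacement bootstrap, and you obtain the pushout product axiom from the gluing lemma in the left proper model category $L_\C(\M)$. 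The forward implication, the inheritance of the unit axiom, the two-variable reduction to generating sets, the extension of flatness to $f\otimes\id_Y$ for non-cofibrant $Y$ (which is exactly the right device for generating cofibrations with non-cofibrant domains), and the presentation of $f\boxprod g$ as a map of pushouts with cofibration legs are all correct.

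The one step that is wrong as justified is your identification of the generating trivial cofibrations of $L_\C(\M)$. The pushout products of maps of $\C$ with $\partial\Delta[n]\to\Delta[n]$ (the augmented $\C$-horns) detect the $\C$-local objects, but they are not a set of generating trivial cofibrations for $L_\C(\M)$; Hirschhorn has to run the Bousfield--Smith cardinality argument to produce the actual generating set, and it is only in the cellular (or combinatorial) setting that this set is known to consist of inclusions of cell complexes, hence of cofibrations between cofibrant objects (\cite{hirschhorn}, 4.5.1). Since the standing hypothesis here is only that $L_\C(\M)$ exists and is cofibrantly generated, your final step has a genuine gap at exactly this point: without control on the (co)domains of the generating trivial cofibrations, the gluing-lemma argument does not apply to them, and the naive repair by cofibrant replacement of $f$ fails because the comparison maps $(QX_i\to X_i)\otimes Y_j$ need not be weak equivalences when $Y_j$ is not cofibrant. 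The fix is either to add the cellular or combinatorial hypothesis (under which your proof is complete) or to route the acyclicity step through the mapping-space characterization you mention at the end, as in \cite{white-localization}. You flagged this yourself, which is to your credit, but as written the main line of the argument does lean on the incorrect identification.
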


If we know in addition that the domains of the generating cofibrations in $\M$ are cofibrant then Theorem 4.5 in \cite{white-localization} proves that it is sufficient to check the condition above for $K$ running through the domains and codomains of the generating cofibrations.

\subsection{Preservation of Operadic Algebras}

Now we consider preservation of algebras over a colored operad under Bousfield localization.  Putting together Theorems \ref{thm:general-preservation} and \ref{sigmacof-alg-semi}, we may deduce:

\begin{theorem}
\label{thm:pres-over-sigma-cofibrant}
Suppose that $\M$ is a cofibrantly generated monoidal model category, $L_\C(\M)$ is a monoidal model category, and $\sO$ is a $\fC$-colored operad in $\M$ that is cofibrant in $\symseqcm$. Then $L_\C$ preserves $\sO$-algebras.
\end{theorem}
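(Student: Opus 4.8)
The plan is to realize the preservation statement as a direct instance of the general monadic preservation result, Theorem \ref{thm:general-preservation}, applied to the free $\sO$-algebra monad $T = \sO \circ -$. By the definition of preservation of $\sO$-algebras, it suffices to verify the hypotheses of Theorem \ref{thm:general-preservation}: that the categories of $T$-algebras inherit projective semi-model structures both before and after localization. Since $\alg(\sO)$ is exactly the category of $T$-algebras, and the monad $T = \sO \circ -$ lives on the product category $\calmc = \M^{\fC}$ (whose induced Bousfield localization is the entrywise one coming from $L_\C$), the existence of these semi-model structures is governed by Theorem \ref{sigmacof-alg-semi}.

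First I would invoke Theorem \ref{sigmacof-alg-semi} over $\M$ itself. As $\M$ is a cofibrantly generated monoidal model category and $\sO$ is cofibrant in $\symseqcm$, the theorem produces a cofibrantly generated semi-model structure on $\alg(\sO)$ whose weak equivalences and fibrations are created in $\calmc$ — that is, precisely the projective semi-model structure demanded by Theorem \ref{thm:general-preservation}.

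The crux is to obtain the analogous semi-model structure over $L_\C(\M)$. By hypothesis $L_\C(\M)$ is monoidal, and by our standing assumptions it is cofibrantly generated, so the only point needing verification is that $\sO$ remains cofibrant in $\symseqcm$ when this diagram category is equipped with the model structure inherited from $L_\C(\M)$. Here I would use the defining feature of left Bousfield localization recalled in Section \ref{subsec:bous-loc}: the cofibrations of $L_\C(\M)$ coincide with those of $\M$. Because the (generating) cofibrations of the projective model structure on $\symseqcm = \calm^{\sigmacopc}$ are assembled entrywise from the (generating) cofibrations of the underlying category, they — and hence the cofibrant objects — are left unchanged upon passing from $\M$ to $L_\C(\M)$. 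Thus $\sO$ is still cofibrant in $\symseqcm$, and Theorem \ref{sigmacof-alg-semi} applied to $L_\C(\M)$ yields a projective semi-model structure on the category of $\sO$-algebras in $L_\C(\M)$.

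With both projective semi-model structures in hand, Theorem \ref{thm:general-preservation} applies to $T = \sO \circ -$ and concludes that $L_\C$ preserves $\sO$-algebras, as desired. The single non-formal ingredient is the invariance of cofibrancy in $\symseqcm$ under localization, which reduces — as indicated — to the standard fact that left Bousfield localization leaves the cofibrations intact; everything else is a direct citation of the two theorems being combined.
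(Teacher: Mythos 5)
Your proposal is correct and follows essentially the same route as the paper: apply Theorem \ref{sigmacof-alg-semi} to both $\M$ and $L_\C(\M)$, observing that $\sO$ remains cofibrant in $\symseqcm$ after localization because left Bousfield localization does not change the cofibrations (and the projective cofibrations of the diagram category are built entrywise from those of the base), then conclude via Theorem \ref{thm:general-preservation} applied to the free $\sO$-algebra monad. The paper's proof makes exactly these three moves, including the same justification for the invariance of cofibrancy.
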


\begin{proof}
We must verify the hypotheses of Theorem \ref{thm:general-preservation}. First, we apply Theorem \ref{sigmacof-alg-semi} to $\M$ to obtain a projective semi-model structure $\alg_\M(\sO)$. Next, we use our assumption that $L_\C(\M)$ is a monoidal model category (which can be verified via Theorem \ref{thm:PPAxiom-nontractable} for example), to apply Theorem \ref{sigmacof-alg-semi} to $L_\C(\M)$ and obtain a projective semi-model structure $\alg_{L_\C(\M)}(\sO)$.  Note that $\sO$ is also a $\fC$-colored operad in $L_\C(\M)$ that is cofibrant as a $\fC$-colored symmetric sequence in $L_\C(\M)$.  The reason is that, first of all, the generating cofibrations in $\calm^{\sigmabrcopd}$ are constructed from those in $\M$.  A similar statement holds with $L_\C(\M)$ in place of $\M$.  But the cofibrations in $\M$ and in $L_\C(\M)$ are the same.  So the entry of $\sO$ in $(L_\C(\M))^{\sigmabrcopd}$ is cofibrant.

Finally, Theorem \ref{thm:general-preservation} applied to the free $\sO$-algebra monad now completes the proof.
\end{proof}

Similarly, we can combine Theorem \ref{thm:general-preservation} and Theorem \ref{colored-model} or Theorem \ref{middle-row-one-color} to prove preservation results for colored operads that are not cofibrant in $\symseqcm$.  At this moment, we do not have easy-to-check conditions on $L_\C$ or $\M$ to guarantee that $(\spadesuit)$ or $(\clubsuit)$ are satisfied by $L_\C(\M)$. Even for the case of the commutative monoid operad, finding such a condition was difficult work in \cite{white-localization}. We hope to return to this problem in the future. For the moment, we still have preservation results if we assume $(\spadesuit)$ or $(\clubsuit)$ on both $\M$ and $L_\C(\M)$.

\begin{theorem}
\label{thm:pres-all-operads}
Suppose that $\calm$ and $L_\C(\M)$ are strongly cofibrantly generated monoidal model categories satisfying
\begin{quote}
$(\spadesuit)$ : for each $n \geq 1$ and for each object $X \in \calm^{\sigmaop_n}$, the function
\[
X \tensorover{\Sigma_n} (-)^{\boxprod n} : \calm \to \calm
\]
preserves trivial cofibrations.
\end{quote}
Then $L_\C$ preserves $\sO$-algebras for all $\fC$-colored operads $\sO$ in $\M$.
\end{theorem}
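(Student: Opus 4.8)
The plan is to mimic the proof of Theorem~\ref{thm:pres-over-sigma-cofibrant}, replacing the appeal to the $\Sigma$-cofibrant existence result (Theorem~\ref{sigmacof-alg-semi}) by the all-operads existence result (Theorem~\ref{colored-model}), and then feeding the outcome into the general preservation machine (Theorem~\ref{thm:general-preservation}). Fix an arbitrary $\fC$-colored operad $\sO$ in $\M$. Since $\M$ and $L_\C(\M)$ share the same underlying category and the same symmetric monoidal closed structure, $\sO$ is simultaneously a $\fC$-colored operad in $L_\C(\M)$, and the free $\sO$-algebra monad $T = \sO \circ -$ is one and the same monad on both model categories.

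First I would apply Theorem~\ref{colored-model} to $\M$. By hypothesis $\M$ is a strongly cofibrantly generated monoidal model category satisfying $(\spadesuit)$, so Theorem~\ref{colored-model} endows $\alg_{\M}(\sO)$ with a cofibrantly generated projective model structure whose weak equivalences and fibrations are created in $\calmc$. Next I would apply the very same theorem to $L_\C(\M)$: this is legitimate precisely because the hypothesis requires $L_\C(\M)$ to be a strongly cofibrantly generated monoidal model category that also satisfies $(\spadesuit)$. This yields a projective model structure on $\alg_{L_\C(\M)}(\sO)$, again with weak equivalences and fibrations created entrywise.

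Finally I would invoke Theorem~\ref{thm:general-preservation} for the monad $T = \sO \circ -$. That theorem asks only that the categories of $T$-algebras in $\M$ and in $L_\C(\M)$ inherit projective \emph{semi}-model structures; since a full projective model structure is in particular a projective semi-model structure, the two model structures produced above supply exactly what is needed, and the conclusion that $L_\C$ preserves $\sO$-algebras follows at once.

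The only point requiring care --- and the natural candidate for the main obstacle --- is the asymmetry hidden in the two applications of $(\spadesuit)$. Although $\M$ and $L_\C(\M)$ have identical cofibrations, they have different classes of trivial cofibrations (localization enlarges the weak equivalences), so the hypothesis $(\spadesuit)$ for $L_\C(\M)$ is a genuinely stronger and independent assumption: it demands that $X \tensorover{\Sigma_n} (-)^{\boxprod n}$ preserve the \emph{$\C$-local} trivial cofibrations. This is exactly why the statement assumes $(\spadesuit)$ for both model categories rather than attempting to derive it for $L_\C(\M)$; unlike the situation for the pushout product axiom, we have no mechanism here (in the spirit of Theorem~\ref{thm:PPAxiom-nontractable}) for checking $(\spadesuit)$ downstream, as already flagged in the discussion preceding the theorem.
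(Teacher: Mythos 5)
Your proposal is correct and follows the paper's own proof exactly: apply Theorem~\ref{colored-model} to both $\M$ and $L_\C(\M)$ to obtain the two projective model structures, then invoke Theorem~\ref{thm:general-preservation} for the free $\sO$-algebra monad. Your closing remark about $(\spadesuit)$ for $L_\C(\M)$ being an independent assumption matches the discussion the paper gives just before the theorem statement.
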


\begin{proof}
We must verify the hypotheses of Theorem \ref{thm:general-preservation}. We do so by applying Theorem \ref{colored-model} to both $\M$ and $L_\C(\M)$ to obtain transferred model structures on $\alg_\M(\sO)$ and $\alg_{L_\C(\M)}(\sO)$. That $L_\C$ preserves $\sO$-algebras then follows from Theorem \ref{thm:general-preservation}.
\end{proof}

\begin{theorem}
\label{thm:pres-levelwise-cof-operads}
Suppose that $\calm$ and $L_\C(\M)$ are cofibrantly generated monoidal model categories, and both $\M$ and $L_\C(\M)$ satisfy
\begin{quote}
$(\clubsuit)$: For each $n \geq 1$ and $X \in \calm^{\sigmaop_n}$ that is cofibrant in $\calm$, the function
\[
X \tensorover{\Sigma_n} (-)^{\boxprod n} : \calm \to \calm
\]
preserves (trivial) cofibrations.
\end{quote}
Then $L_\C$ preserves $\sO$-algebras for all entrywise cofibrant $\fC$-colored operads $\sO$ in $\M$.
\end{theorem}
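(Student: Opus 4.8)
The plan is to verify the hypotheses of Theorem \ref{thm:general-preservation} for the free $\sO$-algebra monad $T = \sO \comp -$, exactly as in the proofs of Theorems \ref{thm:pres-over-sigma-cofibrant} and \ref{thm:pres-all-operads}. Since Theorem \ref{thm:general-preservation} requires that the categories of $T$-algebras inherit projective semi-model structures from both $\M$ and $L_\C(\M)$, the heart of the argument is to produce these two semi-model structures, and here the relevant tool is Theorem \ref{middle-row-one-color}, which applies to entrywise cofibrant colored operads over any cofibrantly generated monoidal model category satisfying $(\clubsuit)$.

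First I would apply Theorem \ref{middle-row-one-color} directly to $\M$: by hypothesis $\M$ is a cofibrantly generated monoidal model category satisfying $(\clubsuit)$ and $\sO$ is entrywise cofibrant in $\M$, so $\alg_\M(\sO)$ acquires a cofibrantly generated projective semi-model structure with weak equivalences and fibrations created in $\M$. The main point --- and the one step requiring a genuine argument rather than a mere invocation --- is to run the same theorem over $L_\C(\M)$. By hypothesis $L_\C(\M)$ is again a cofibrantly generated monoidal model category satisfying $(\clubsuit)$, so the only remaining thing to check is that $\sO$ stays entrywise cofibrant when regarded as a $\fC$-colored operad in $L_\C(\M)$. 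This follows from the defining feature of left Bousfield localization: the cofibrations, and hence the cofibrant objects, of $\M$ and $L_\C(\M)$ coincide, and the same holds entrywise in the diagram categories $\calm^{\sigmabrcopd}$, whose generating cofibrations are built from those of the underlying category. Consequently each entry of $\sO$ remains cofibrant in $L_\C(\M)$, Theorem \ref{middle-row-one-color} applies, and $\alg_{L_\C(\M)}(\sO)$ carries a projective semi-model structure as well.

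With both semi-model structures in hand, Theorem \ref{thm:general-preservation} applied to $T = \sO \comp -$ yields that $L_\C$ preserves $\sO$-algebras, completing the proof. I expect the construction to mirror Theorem \ref{thm:pres-over-sigma-cofibrant} almost line for line, with the cofibrancy of $\sO$ as a symmetric sequence replaced by the weaker entrywise cofibrancy and with $(\clubsuit)$ substituting for the $\Sigma$-cofibrancy hypothesis. The only care needed --- and the step I would flag as the sole non-formal obstacle --- is transporting the entrywise cofibrancy of $\sO$ across the localization; this is immediate from the invariance of cofibrations under $L_\C$, but it is the hinge on which the applicability of Theorem \ref{middle-row-one-color} over $L_\C(\M)$ rests.
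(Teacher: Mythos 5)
Your proposal is correct and follows the paper's own proof essentially verbatim: apply Theorem \ref{middle-row-one-color} to both $\M$ and $L_\C(\M)$ (using that entrywise cofibrancy of $\sO$ transfers because left Bousfield localization does not change cofibrations, hence not cofibrant objects), and then invoke Theorem \ref{thm:general-preservation}. No gaps.
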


\begin{proof}
We must verify the hypotheses of Theorem \ref{thm:general-preservation}. We apply Theorem \ref{middle-row-one-color} to both $\M$ and $L_\C(\M)$ to obtain transferred semi-model structures on $\alg_\M(\sO)$ and $\alg_{L_\C(\M)}(\sO)$. Note that $\sO$ is also an entrywise cofibrant $\fC$-colored operad in $L_\C(\M)$ because cofibrant objects in $\M$ and in $L_\C(\M)$ are the same. Finally, we apply \ref{thm:general-preservation} to deduce that $L_\C$ preserves $\sO$-algebras.
\end{proof}

\section{Applications} \label{sec:applications}

In this section we prove that $(\spadesuit)$ is satisfied in several model categories of interest. The results of Section \ref{sec:model-on-algebras} then provide (semi-)model structures on algebras over colored operads in these settings, and the results of Section \ref{sec:preservation} provide preservation results with respect to left Bousfield localization.

\subsection{Chain Complexes}

\begin{theorem}
Let $k$ be a field of characteristic zero and let $\Ch(k)$ denote either bounded or unbounded chain complexes over $k$. Then the projective (equivalently, the injective) model structure on $\Ch(k)$ satisfies $(\spadesuit)$.
\end{theorem}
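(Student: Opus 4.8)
The plan is to exploit the two special features of $\Ch(k)$ when $k$ is a field of characteristic zero: tensoring over $k$ is exact, and the group algebra $k[\Sigma_n]$ is semisimple. First recall that over a field every module is both projective and injective, so in either the projective or the injective model structure on $\Ch(k)$ the cofibrations are exactly the monomorphisms and the weak equivalences are the quasi-isomorphisms. In particular the two structures have the same cofibrations and the same trivial cofibrations, and since $(\spadesuit)$ refers only to trivial cofibrations it suffices to argue once; this is why the projective and injective cases are equivalent here. Thus a trivial cofibration is precisely a monomorphism that is a quasi-isomorphism, and to verify $(\spadesuit)$ I must show that for such a $g$ and any $X \in \Ch(k)^{\sigmaop_n}$ the map $X \tensorover{\Sigma_n} g^{\boxprod n}$ is again a monomorphism and a quasi-isomorphism.

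I would factor the functor $X \tensorover{\Sigma_n}(-)^{\boxprod n}$ through three steps and check that each preserves monomorphisms and quasi-isomorphisms. Step one is $g \mapsto g^{\boxprod n}$: by the pushout product axiom and its iteration, $g^{\boxprod n}$ is a trivial cofibration in $\Ch(k)$, i.e. an underlying monomorphism and quasi-isomorphism, and it is $\Sigma_n$-equivariant for the permutation action on the tensor factors. Step two is $Z \mapsto X \otimes_k Z$ with the diagonal $\Sigma_n$-action; since $k$ is a field every $k$-vector space is flat, so this functor is exact degreewise, and by the K\"unneth theorem (no $\Tor$ terms over a field) it preserves quasi-isomorphisms. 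Step three is the coinvariants functor $(-)_{\Sigma_n}$, under which $X \tensorover{\Sigma_n} Z = (X \otimes_k Z)_{\Sigma_n}$ in the sense of Definition \ref{def:tensorover}.

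The crux is the third step, and it is the only place where characteristic zero is essential. By Maschke's theorem $k[\Sigma_n]$ is semisimple, so $k$ is a projective $k[\Sigma_n]$-module and the coinvariants functor $(-)_{\Sigma_n}$ is exact; equivalently, each term $X_i$ is flat over $k[\Sigma_n]$, so $X \otimes_{k[\Sigma_n]} (-) = X \tensorover{\Sigma_n}(-)$ preserves monomorphisms and quasi-isomorphisms. Semisimplicity also dispatches the usual unbounded-complex subtlety, since over a ring of global dimension zero every complex is K-flat, so no boundedness hypothesis on $X$ is needed. Composing the three steps, $X \tensorover{\Sigma_n} g^{\boxprod n}$ is a monomorphism and a quasi-isomorphism, hence a trivial cofibration, which proves $(\spadesuit)$. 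I expect no real obstacle beyond bookkeeping: the entire content is concentrated in the semisimplicity of $k[\Sigma_n]$, which is exactly what fails in positive characteristic and forces one to retreat to the weaker hypothesis $(\clubsuit)$ there.
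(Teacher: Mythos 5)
Your proof is correct, and it reaches the conclusion by a genuinely more hands-on route than the paper. The paper's argument is model-categorical: Maschke's theorem makes every object of $\Ch(k)^{\sigmaop_n}$ projectively cofibrant, so $\left(\varnothing \to X\right) \boxprod g^{\boxprod n}$ is a projective trivial cofibration by \cite{bm06} (2.5.1/2.5.2), and applying the left Quillen functor $(-)_{\Sigma_n}$ finishes the job. You instead use Maschke's theorem to make the functor $X \tensorover{\Sigma_n} (-) = \bigl(X \otimes_k -\bigr)_{\Sigma_n}$ exact on the nose (flatness of $k$ over the semisimple ring $k[\Sigma_n]$, plus flatness of everything over the field $k$), and then simply push the trivial cofibration $g^{\boxprod n}$ through an exact functor, noting that exact functors preserve both monomorphisms and quasi-isomorphisms. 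The two proofs isolate the same essential input --- semisimplicity of $k[\Sigma_n]$ --- but yours is more elementary and self-contained (no appeal to the Berger--Moerdijk equivariant pushout-product lemma), and it has the side benefit of justifying the parenthetical ``equivalently, the injective'' in the statement, via the observation that over a field every complex is cofibrant in the projective structure so the two model structures coincide; the paper leaves that point implicit. The paper's formulation, on the other hand, is the one that transports to the localized setting later in that section (replacing ``trivial cofibration'' by ``locally trivial cofibration''), which your exactness argument would not do verbatim since local equivalences are not detected by homology. As a small bookkeeping remark, your K-flatness aside is not strictly needed once you know the relevant functors are exact termwise: exactness alone gives commutation with homology and hence preservation of quasi-isomorphisms, with no boundedness or resolution issues.
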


\begin{corollary}
For any colored operad $P$ in $\Ch(k)$, $P$-algebras inherit a model structure from $\Ch(k)$.
\end{corollary}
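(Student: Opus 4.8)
The plan is to apply Theorem~\ref{colored-model} directly. That result yields a projective model structure on $\alg(P)$ for every $\fC$-colored operad $P$, provided that $\calm = \Ch(k)$ is a strongly cofibrantly generated monoidal model category satisfying $(\spadesuit)$. Since the condition $(\spadesuit)$ is exactly the content of the theorem above, the only remaining work is to record the standard structural properties of $\Ch(k)$.

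First I would recall that the projective model structure on $\Ch(k)$ --- for either bounded or unbounded complexes --- is cofibrantly generated and monoidal with respect to $\otimes_k$. The generating cofibrations are the maps $S^{n-1} \to D^n$ and the generating trivial cofibrations are the maps $0 \to D^n$; their domains are compact (finitely presentable) chain complexes, hence small relative to the whole category, so $\Ch(k)$ is strongly cofibrantly generated in the sense used here. The pushout product axiom for $(\Ch(k), \otimes_k)$ is classical.

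With these facts in hand, and with $(\spadesuit)$ furnished by the preceding theorem, all hypotheses of Theorem~\ref{colored-model} are met. Applying it to an arbitrary $\fC$-colored operad $P$ produces a cofibrantly generated projective model structure on $\alg(P)$ whose weak equivalences and fibrations are created in $\Ch(k)^{\fC}$ (that is, entrywise), with generating (trivial) cofibrations $P \comp \sI$ (resp.\ $P \comp \sJ$). This is precisely the claim of the corollary.

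I do not anticipate any serious obstacle: the substantive homotopical input --- the filtration of Section~\ref{sec:alg-over-colored} together with the verification that $(\spadesuit)$ forces each map $\id \tensorover{\Sigma_t} i^{\boxprod t}$ to be a trivial cofibration --- has already been carried out in the proof of Theorem~\ref{colored-model}, and the verification of $(\spadesuit)$ itself is the theorem above. The only point needing a word of care is the smallness claim underpinning strong cofibrant generation, and this is immediate because $\Ch(k)$ is locally presentable and the domains of its generating maps are compact.
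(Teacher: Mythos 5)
Your proposal is correct and follows exactly the paper's route: the paper likewise derives this corollary by citing Theorem \ref{colored-model}, with $(\spadesuit)$ supplied by the preceding theorem. Your additional remarks on strong cofibrant generation and the pushout product axiom for $\Ch(k)$ are accurate standard facts that the paper leaves implicit.
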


\begin{corollary}
Any left Bousfield localization of $\Ch_{\geq 0}(k)$ preserves algebras over any colored operad $P$. Any left Bousfield localization of unbounded chain complexes that is stable (i.e. commutes with suspension) preserves algebras over any colored operad $P$.
\end{corollary}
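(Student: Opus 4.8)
The plan is to deduce the statement from Theorem \ref{thm:pres-all-operads}, which yields preservation of $P$-algebras as soon as \emph{both} $\M$ (here $\Ch_{\geq 0}(k)$, or unbounded $\Ch(k)$) and its localization $L_\C(\M)$ are strongly cofibrantly generated monoidal model categories satisfying $(\spadesuit)$. For $\M$ itself these hypotheses are exactly the preceding theorem together with the fact that $\Ch(k)$ is locally presentable, so that every object, and in particular the domains $S^{n-1}$ and codomains $D^n$ of the generating cofibrations, is small. Since left Bousfield localization alters neither the underlying category nor the cofibrations, and preserves smallness, $L_\C(\M)$ is again strongly cofibrantly generated with the same cofibrations and the same small objects. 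Thus the whole problem reduces to establishing, for $L_\C(\M)$, the pushout product axiom (monoidality) and the axiom $(\spadesuit)$; throughout I would use that $L_\C(\M)$ is left proper, which is a standing hypothesis preserved under localization.

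For monoidality I would invoke Theorem \ref{thm:PPAxiom-nontractable}: over a field every object is flat, so cofibrant objects are flat and the pushout product axiom for $L_\C(\M)$ reduces to checking that $f \otimes \mathrm{id}_K$ is a $\C$-local equivalence for each $f \in \C$ and each $K$ among the domains and codomains of the generating cofibrations (using the refinement, Theorem 4.5 of \cite{white-localization}, valid since the domains $S^{n-1}$ are cofibrant). These $K$ are the complexes $D^n$ and $S^n$. When $K = D^n$ is contractible, $f \otimes \mathrm{id}_{D^n}$ is a quasi-isomorphism, since tensoring over a field is exact and both source and target are acyclic. When $K = S^n = k[n]$, the map $f \otimes \mathrm{id}_{S^n}$ is the shift $f[n]$, so what is needed is closure of $\C$-local equivalences under the relevant shifts. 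In the bounded case $\Ch_{\geq 0}(k)$ only nonnegative shifts occur, and closure under the suspension $[1]$ follows from the gluing lemma in the left proper model category $L_\C(\M)$ applied to the presentation $X[1] = \mathrm{cofiber}(X \to C X)$: the cones $C X$ are contractible, so in the comparison of pushouts the vertical maps $0, f, Cf$ are $\C$-local equivalences while the inclusions $X \to C X$ are cofibrations, forcing $f[1]$ to be a $\C$-local equivalence, and iterating handles $[n]$ for $n \geq 0$. In the unbounded case all shifts occur, and the hypothesis that $L_\C$ is \emph{stable} is precisely the assertion that $\C$-local equivalences are closed under suspension and desuspension, giving $f[n]$ a $\C$-local equivalence for every $n \in \Z$. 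This shows $L_\C(\M)$ is a monoidal model category.

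With monoidality in hand, $(\spadesuit)$ for $L_\C(\M)$ follows by combining the pushout product axiom in $L_\C(\M)$ with characteristic zero. Let $i$ be a trivial cofibration of $L_\C(\M)$, that is, a cofibration that is a $\C$-local equivalence, and fix $X \in \M^{\sigmaop_n}$. Iterating the pushout product axiom in $L_\C(\M)$ shows $i^{\boxprod n}$ is a trivial cofibration there, and hence $\mathrm{id}_X \otimes i^{\boxprod n}$ is a $\C$-local equivalence (over a field $-\otimes X$ preserves $\C$-local equivalences, after reducing $X$ to its homology and using closure under shifts and coproducts). Now characteristic zero enters: the averaging idempotent $\tfrac{1}{n!}\sum_{\sigma \in \Sigma_n}\sigma$ exhibits $(-)_{\Sigma_n}$ as a natural retract of the forgetful functor $\M^{\sigmaop_n} \to \M$, so applying it to the morphism $\mathrm{id}_X \otimes i^{\boxprod n}$ exhibits $X \tensorover{\Sigma_n} i^{\boxprod n}$ as a retract, in the arrow category, of $\mathrm{id}_X \otimes i^{\boxprod n}$. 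As $\C$-local equivalences are closed under retracts, $X \tensorover{\Sigma_n} i^{\boxprod n}$ is a $\C$-local equivalence, and it is a cofibration because cofibrations agree in $\M$ and $L_\C(\M)$ and the functor preserves them over a field of characteristic zero; thus it is a trivial cofibration of $L_\C(\M)$, which is $(\spadesuit)$. Feeding $\M$ and $L_\C(\M)$ into Theorem \ref{thm:pres-all-operads} then gives preservation of $P$-algebras.

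The main obstacle is the monoidality of $L_\C(\M)$, and more precisely the closure of $\C$-local equivalences under suspension. For $\Ch_{\geq 0}(k)$ this is automatic, because cones are contractible and localizations of left proper model categories stay left proper, so the gluing lemma applies; in the unbounded setting there is no such positivity, and it is exactly to supply closure under desuspension that the stability hypothesis is imposed. Everything else — the cofibration bookkeeping and the passage from $\C$-local equivalences on $\M$ to the $\Sigma_n$-twisted functor $X \tensorover{\Sigma_n} (-)^{\boxprod n}$ — is forced by the exactness of the tensor product and of $\Sigma_n$-coinvariants over a field of characteristic zero.
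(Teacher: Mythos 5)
Your proof is correct, but it reroutes around the two places where the paper leans on external citations, and it is worth recording how. For the verification that $L_\C(\Ch_{\geq 0}(k))$ is monoidal, the paper simply cites the classification of left Bousfield localizations of $\Ch_{\geq 0}(k)$ as truncations (Corollary 7.6 of \cite{white-localization}) and checks the pushout product axiom for those; you instead run Theorem \ref{thm:PPAxiom-nontractable} directly, reducing to $f\otimes S^n$ and $f\otimes D^n$ and proving closure of $\C$-local equivalences under suspension via the gluing lemma on $X\rightarrowtail CX \to X[1]$ in the left proper localized structure. That is more self-contained and makes transparent why no stability hypothesis is needed in the connective case (only nonnegative shifts occur), at the cost of re-deriving a fact the paper imports. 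In the unbounded case both arguments coincide: stability is exactly closure under all shifts, which by Theorem \ref{thm:PPAxiom-nontractable} is monoidality. For $(\spadesuit)$ in $L_\C(\M)$, the paper reruns its Maschke/Berger--Moerdijk argument verbatim with ``locally trivial cofibration'' in place of ``trivial cofibration'' (all symmetric sequences remain projectively cofibrant since cofibrations are unchanged, and $(-)_{\Sigma_n}$ remains left Quillen); you instead use the characteristic-zero averaging idempotent to exhibit $X\tensorover{\Sigma_n} i^{\boxprod n}$ as a retract of $\id_X\otimes i^{\boxprod n}$ and conclude by closure of locally trivial cofibrations under retracts. These are the same Maschke input packaged differently, and your homology-decomposition step for showing $\id_X\otimes i^{\boxprod n}$ is a local equivalence could be shortened: once Theorem \ref{thm:PPAxiom-nontractable} applies, flatness of cofibrant objects in $L_\C(\M)$ is part of the conclusion, so tensoring the locally trivial cofibration $i^{\boxprod n}$ with $X$ immediately gives a local equivalence. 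Both routes feed into Theorems \ref{colored-model} and \ref{thm:pres-all-operads} identically.
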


\begin{proof}
The hypotheses of the theorem imply that all symmetric sequences in $\Ch(k)$ are projectively cofibrant, effectively by Maschke's Theorem. It follows that any $Z\in (\Ch(k))^{\Sigma_n^{op}}$ is $\Sigma_n$-projectively cofibrant. For any trivial cofibration $f$, $f^{\boxprod n}$ is a $\Sigma_n$-equivariant map which is again a trivial cofibration in $\Ch(k)$, so Lemma 2.5.2 in \cite{bm06} implies 
\[
Z\tensorover{\Sigma_n} f^{\boxprod n} = ((\emptyset \to Z)\boxprod f^{\boxprod n})_{\Sigma_n}
\]
 is a trivial cofibration in $\Ch(k)$ because $(-)_{\Sigma_n}$ is a left Quillen functor from $(\Ch(k))^{\Sigma_n^{op}}$ to $\Ch(k)$. 

The first corollary follows from Theorem \ref{colored-model}. To prove the second corollary, note that $(\spadesuit)$ remains true after any monoidal left Bousfield localization by the argument in the preceding paragraph, replacing ``trivial cofibration'' by ``locally trivial cofibration'' everywhere. The second corollary therefore follows from Theorem \ref{thm:pres-all-operads} as soon as we know the pushout product axiom holds after localization.

In $\Ch_{\geq 0}$ the only left Bousfield localizations are truncations, for which the pushout product axiom can be checked immediately (cf Corollary 7.6 in \cite{white-localization}). Unbounded chain complexes form a monogenic stable model category, and Theorem \ref{thm:PPAxiom-nontractable} demonstrates that a localization is monoidal if and only if it is stable, since tensoring with a sphere is the same thing as shifting (cf Proposition 5.4 in \cite{white-localization}). 
\end{proof}

\begin{example}
If $k$ is a field with nonzero characteristic then $\Ch(k)$ does not satisfy $(\clubsuit)_{tcof}$. Already with $X=k$, $\Sym(-)$ does not preserve trivial cofibrations, which is shown in 5.1 of \cite{white-commutative} among many other places. This is the reason commutative differential graded algebras cannot inherit a model structure from $\Ch(k)$ when char$(k)\neq 0$.
\end{example}

\subsection{Spaces}

\begin{theorem}
The Quillen model structure on (pointed or unpointed) simplicial sets $\sset$ satisfies $(\spadesuit)$. It follows that the category of algebras over any colored operad in $\sset$ inherits a model structure.
\end{theorem}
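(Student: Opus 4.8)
The plan is to verify $(\spadesuit)$ directly; once this is done, the assertion that algebras over an arbitrary colored operad in $\sset$ inherit a model structure is immediate from Theorem~\ref{colored-model}. I will exploit two features of the Kan--Quillen structure: every simplicial set is cofibrant, and the cofibrations are precisely the monomorphisms. Fix $n \geq 1$, an object $X \in \calm^{\sigmaop_n}$, and a trivial cofibration $f \colon K \to L$ in $\sset$; here the monoidal product $\otimes$ is the Cartesian product. As a trivial cofibration is a cofibration that is a weak equivalence, I will show that $X \tensorover{\Sigma_n} f^{\boxprod n}$ is both a monomorphism and a weak equivalence.

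For the cofibration half, note that $f^{\boxprod n}$ is a $\Sigma_n$-equivariant monomorphism, being an iterated pushout product of the monomorphism $f$. Multiplying by $X$ preserves monomorphisms, and passing to $\Sigma_n$-coinvariants preserves equivariant monomorphisms: levelwise, if $A \hookrightarrow B$ is an equivariant injection of $\Sigma_n$-sets and $\sigma a_2 = a_1$ in $B$ with $a_1,a_2 \in A$, then $\sigma a_2 \in A$ already, so the two orbits coincide in $A/\Sigma_n$. Hence $X \tensorover{\Sigma_n} f^{\boxprod n}$ is a monomorphism, i.e.\ a cofibration.

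For the weak-equivalence half I pass to topological spaces. Geometric realization $|{-}| \colon \sset \to \Top$ is a left adjoint, is strong symmetric monoidal for the Cartesian product, and detects weak equivalences; since it commutes with the coinvariants coend and with pushout products, there is an isomorphism $|X \tensorover{\Sigma_n} f^{\boxprod n}| \cong |X| \times_{\Sigma_n} (|f|^{\boxprod n})$, and it suffices to show the right-hand map is a weak equivalence in $\Top$. Now $|f|\colon |K|\to|L|$ is a trivial cofibration of spaces, so (all objects of $\Top$ being fibrant) $|K|$ is a strong deformation retract of $|L|$; fix a retracting homotopy $H\colon |L|\times I \to |L|$ rel $|K|$. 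Applying $H$ in each of the $n$ coordinates yields a $\Sigma_n$-equivariant deformation retraction $H^{\times n}$ of $|L|^{\times n}$ onto $|K|^{\times n}$; because $H$ fixes $|K|$ pointwise, $H^{\times n}$ preserves the domain $P\subseteq |L|^{\times n}$ of $|f|^{\boxprod n}$ (the tuples having some coordinate in $|K|$) and restricts there to an equivariant deformation retraction of $P$ onto $|K|^{\times n}$. Thus the equivariant inclusion $P \hookrightarrow |L|^{\times n}$ is a $\Sigma_n$-equivariant homotopy equivalence, both sides equivariantly deformation retracting onto the common $|K|^{\times n}$. Since $|X| \times_{\Sigma_n}(-)$ sends equivariant homotopies to homotopies (the interval carrying the trivial action), it takes this equivariant homotopy equivalence to a homotopy equivalence, hence to a weak equivalence. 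Combined with the previous paragraph, this establishes $(\spadesuit)$. The pointed case is identical, replacing $\times$ by the smash product and using pointed realization, the coordinatewise homotopy preserving the basepoint.

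The crux is the weak-equivalence half, and the main obstacle is that the permutation action of $\Sigma_n$ on $|L|^{\times n}$ is not free along the diagonal; consequently $X$ cannot be assumed cofibrant and one cannot simply invoke that $(-)_{\Sigma_n}$ is left Quillen, which is exactly the gap between $(\spadesuit)$ and the weaker $(\clubsuit)$. What rescues the argument is that the deformation retraction witnessing the triviality of $f$ may be applied coordinatewise to produce a \emph{genuinely} $\Sigma_n$-equivariant contraction of the pair $(|L|^{\times n},P)$ onto the diagonally-acted $|K|^{\times n}$, so that the strict coinvariants already compute the correct homotopy type for every coefficient object $X$, free action or not.
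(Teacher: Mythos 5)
Your proof is correct, but the weak-equivalence half takes a genuinely different route from the paper's. The paper stays inside $\sset$ and argues in two stages: first (Lemma \ref{lemma:sym-for-sset}) it writes $Z\times_{\Sigma_n} i^{\times n}$ as a homotopy colimit over the orbit category of $\Sigma_n$ of the fixed-point maps $Z^H\times (i^{\times n})^H$, identifies each fixed-point power $(D^{\times n})^H$ with a smaller power $D^{\times k}$, and invokes closure of ($\C$-local) equivalences under finite products; second (Lemma \ref{lemma:sym-we-iff-boxprod}) it runs an induction over the filtration $Q^n_q$ to upgrade the conclusion from $i^{\times n}$ to $i^{\boxprod n}$. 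You instead realize everything topologically and use that a trivial cofibration of spaces is a strong deformation retract inclusion; applying the retracting homotopy coordinatewise produces a genuinely $\Sigma_n$-equivariant deformation retraction of both $|L|^{\times n}$ and the punctured cube $P$ onto $|K|^{\times n}$, so the inclusion $P\to |L|^{\times n}$ is an equivariant homotopy equivalence and strict coinvariants compute the right homotopy type for an arbitrary coefficient object $X$, free action or not. Your route is more elementary and self-contained, and it handles $i^{\boxprod n}$ in one stroke with no orbit-category homotopy colimit and no $Q^n_q$ induction. What it gives up is generality: the deformation-retract argument is tied to honest trivial cofibrations, whereas the paper's fixed-point/product argument applies verbatim to $\C$-local equivalences, which is exactly what is needed for the paper's subsequent corollary that $(\spadesuit)$ persists after any left Bousfield localization of $\sset$; your method would not give that corollary. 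One small point worth a sentence in the pointed case: the basepoint lies in $|K|$, so the coordinatewise homotopy preserves the fat wedge being collapsed and hence descends to the smash powers. For the theorem as stated, your argument is complete.
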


\begin{corollary}
For any colored operad $P$ in $\sset$, any left Bousfield localization preserves $P$-algebras because $(\spadesuit)$ remains true in the localized model category.
\end{corollary}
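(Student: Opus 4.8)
The plan is to exploit the two defining features of the Quillen model structure on $\sset$: every object is cofibrant, and the cofibrations are precisely the monomorphisms. Fix $n \geq 1$, an object $X \in \sset^{\sigmaop_n}$, and a trivial cofibration $f \colon A \to B$; I must show that $X \tensorover{\Sigma_n} f^{\boxprod n}$ is a monomorphism that is also a weak equivalence.

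First I would dispatch the cofibration half. The iterated pushout-product $f^{\boxprod n}$ is a monomorphism by the iterated pushout product axiom, so it is enough to note that $X \tensorover{\Sigma_n}(-)$ sends monomorphisms of $\Sigma_n$-simplicial sets to monomorphisms. This is a degreewise check: realizing the coend as a quotient $(X \times (-))/\Sigma_n$, two simplices of $X \times U$ become identified in $(X \times V)/\Sigma_n$ only through a group element, and since $U \subseteq V$ is $\Sigma_n$-stable this identification already takes place in $(X \times U)/\Sigma_n$. Hence the map on quotients is injective in each degree, and $X \tensorover{\Sigma_n} f^{\boxprod n}$ is a cofibration.

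The weak-equivalence half is the real content, and I expect it to be the main obstacle. The argument used for $\Ch(k)$ is unavailable: there is no Maschke-type statement rendering an arbitrary $X$ projectively cofibrant in $\sset^{\sigmaop_n}$ (indeed $\Sigma_n$ does not act freely on the top cells of $B^{\times n}$), so I cannot simply present $f^{\boxprod n}$ as a projective trivial cofibration and quote \cite{bm06}. Instead I would induct on $n$ using the $Q$-construction filtration $A^{\times n} = Q_0 \to Q_1 \to \cdots \to Q_n = B^{\times n}$ of Definition \ref{def:q-construction}. Since $X \tensorover{\Sigma_n}(-)$ preserves colimits, applying it to this filtration exhibits $X \tensorover{\Sigma_n} f^{\boxprod n}$ as a finite composite of pushouts of maps of the form $X|_{\Sigma_{n-q}\times\Sigma_q} \tensorover{\Sigma_{n-q}\times\Sigma_q} \big(A^{\times(n-q)} \times f^{\boxprod q}\big)$, via the canonical isomorphism $X \tensorover{\Sigma_n}\big(\Sigma_n \dotover{\Sigma_{n-q}\times\Sigma_q}(-)\big) \cong X|_{\Sigma_{n-q}\times\Sigma_q}\tensorover{\Sigma_{n-q}\times\Sigma_q}(-)$. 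Each $A^{\times(n-q)} \times f^{\boxprod q}$ is an equivariant trivial cofibration because products preserve weak equivalences in $\sset$, so the induction reduces the whole statement to a single base fact: that for each $m$ the twisted symmetric power $Z \tensorover{\Sigma_m}(-)^{\times m}$ preserves weak equivalences of simplicial sets. I would prove this by filtering $Z$ along its skeleta, whereupon $Z \tensorover{\Sigma_m}(-)^{\times m}$ decomposes over the orbit cells $\Sigma_m/H$ into products of ordinary symmetric powers $\Sym^{\bullet}$, and symmetric powers of simplicial sets are classically known to preserve weak equivalences. Closure of weak equivalences under pushout (left properness of $\sset$) and transfinite composition then completes the weak-equivalence half, so $(\spadesuit)$ holds; feeding this into Theorem \ref{colored-model} yields the admissibility of every colored operad in $\sset$.

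For the corollary I would verify that $(\spadesuit)$ persists to $L_\C(\sset)$ and then quote Theorem \ref{thm:pres-all-operads}. Because left Bousfield localization leaves the cofibrations unchanged, the monomorphism half is immediate, and the entire inductive argument above transports verbatim with ``weak equivalence'' replaced throughout by ``$\C$-local equivalence,'' provided one knows the base fact in its localized form: that symmetric powers of simplicial sets preserve $\C$-local equivalences. This last point, not the bookkeeping, is the crux, and it is exactly what distinguishes this corollary from its chain-complex counterpart, which required a \emph{monoidal} localization: for simplicial sets the preservation of local equivalences by $\Sym^{\bullet}$ holds for an arbitrary left Bousfield localization, since a $\C$-local equivalence is assembled from the maps of $\C$ together with genuine weak equivalences, each class being carried into $\C$-local equivalences by $\Sym^{\bullet}$. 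With $(\spadesuit)$ in force for both $\sset$ and $L_\C(\sset)$, Theorem \ref{thm:pres-all-operads} delivers preservation of $P$-algebras for every colored operad $P$.
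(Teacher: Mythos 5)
Your overall architecture matches the paper's: the monomorphism half is handled exactly as the paper handles it, and the reduction of the weak-equivalence half via the $Q$-construction filtration to the single base fact that $Z\tensorover{\Sigma_m}(-)^{\times m}$ preserves (local) weak equivalences is precisely the paper's Lemma \ref{lemma:sym-we-iff-boxprod}. The divergence, and the problems, lie in how you establish that base fact. First, a repairable inaccuracy: filtering $Z$ by skeleta, an orbit cell $\Sigma_m/H\times\Delta^k$ contributes $\Delta^k\times\bigl(Y^{\times m}/H\bigr)$, and for a subgroup $H\leq\Sigma_m$ that is not a Young subgroup (e.g.\ the cyclic group generated by an $m$-cycle) the quotient $Y^{\times m}/H$ is \emph{not} a product of symmetric powers, so ``symmetric powers preserve weak equivalences'' does not literally cover the cells you need. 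The fact you actually need, that $Y\mapsto Y^{\times m}/H$ preserves weak equivalences for every $H$, is true, and the paper proves it in one stroke for all $Z$ by writing $Z\times_{\Sigma_m}i^{\times m}$ as a homotopy colimit over the orbit category of the maps $Z^H\times(i^{\times m})^H\cong Z^H\times i^{\times k}$, following Farjoun; note that it is the $H$-\emph{fixed points} of a power, not the quotients, that are honest products.

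The genuine gap is in the localized version of the base fact, which you correctly identify as the crux. Your justification, that a $\C$-local equivalence is ``assembled from'' maps of $\C$ together with weak equivalences, each of which $\Sym^{\bullet}$ sends to $\C$-local equivalences, does not work. The class of $\C$-local equivalences is generated from $\C$ and the weak equivalences by closure operations (two-out-of-three, homotopy colimits) that are not preserved by $\Sym^{m}$ or by $(-)^{\times m}/H$, since these functors are not left adjoints and do not commute with pushouts; preservation of the generators therefore says nothing about the whole class. Worse, the claim that $\Sym^{m}$ carries a map of $\C$ to a $\C$-local equivalence is itself unproved, and it is essentially the statement being established. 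The paper closes exactly this hole with two inputs special to $\sset$: every left Bousfield localization of $\sset$ is monoidal (SM7, Hirschhorn 4.1.1), and $\C$-local equivalences in $\sset$ are closed under finite products; feeding these into the orbit-category homotopy colimit decomposition gives Lemma \ref{lemma:sym-for-sset} for an arbitrary set $\C$, hence $(\spadesuit)$ in $L_\C(\sset)$. You also never verify that $L_\C(\sset)$ is a monoidal model category, which Theorem \ref{thm:pres-all-operads} requires and which your own induction implicitly uses when it treats $f^{\boxprod q}$ as a local trivial cofibration.
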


\begin{proof}
In $\sset$, the classes of cofibrations and monomorphisms coincide. It is easy to see that if $i:A\to B$ is a monomorphism then 
\[
i\boxprod i:A\times B \coprodover{A\times A} B\times A \to B\times B
\]
is a monomorphism, since $i\times A$ is a monomorphism. Similarly, for any $Z$, $Z\times i^{\boxprod n}$ is a monomorphism and the quotient of any monomorphism by a $\Sigma_n$-action is a monomorphism.

The same holds for pointed simplicial sets with the smash product, by the same argument. Here we use that $i$ preserves the base point, so again $i\wedge B$ is a monomorphism (because the subset of the domain which is contracted to the base point is taken to the subset of the codomain which is contracted). Thus, $i^{\boxprod n}$ and $Z\wedge i^{\boxprod n}$ are monomorphisms, and hence so is $Z\wedge_{\Sigma_n} i^{\boxprod n}$.

Now suppose $i$ is a trivial cofibration. Then $i^{\boxprod n}:Q_n\to B^{\times n}$ is a trivial cofibration by the pushout product axiom. For any $Z$, $Z\times i^{\boxprod n}$ is a trivial cofibration. If $Z$ has a $\Sigma_n$-action then $Z\times_{\Sigma_n} i^{\boxprod n}$ is a cofibration by the argument above and we will now prove it is a weak equivalence by the same reasoning as in Theorem 5.2 in \cite{white-commutative} (in the one-color case this has also appeared as Theorem 9.2 in \cite{casacuberta-raventos-tonks}). Lemma \ref{lemma:sym-for-sset} proves that $Z\times_{\Sigma_n} i^{\times n}$ is a weak equivalence. Lemma \ref{lemma:sym-we-iff-boxprod} proves this suffices.

The second statement of the theorem follows from Theorem \ref{colored-model}. To prove the corollary, note that every left Bousfield localization of $\sset$ is monoidal by Theorem 4.1.1 of \cite{hirschhorn}, since the pushout product axiom is the same as SM7 in this case. Next, Lemma \ref{lemma:sym-for-sset} below implies $(\spadesuit)$ remains true after localization. Finally, Theorem \ref{thm:pres-all-operads} implies the corollary. 
\end{proof}

\begin{lemma} \label{lemma:sym-we-iff-boxprod}
Let $(M,\otimes)$ be a symmetric monoidal category which is also a model category. Assume for every cofibration $i$ between cofibrant objects, every $n$, and every $Z \in \M^{\Sigma_n}$ cofibrant in $\M$, that $Z\otimes_{\Sigma_n} i^{\boxprod n}$ is a cofibration. Then $Z\otimes_{\Sigma_n}i^{\boxprod n}$ is a trivial cofibration for all $n$ if $Z\otimes_{\Sigma_n} i^{\otimes n}$ is a weak equivalence.
\end{lemma}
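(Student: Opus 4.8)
The plan is to induct on $n$, using the filtration of $i^{\otimes n}$ furnished by the $Q$-construction (Definition \ref{def:q-construction}) together with the $2$-out-of-$3$ axiom. The base case $n=1$ is immediate: here $i^{\boxprod 1} = i^{\otimes 1} = i$ and $Z \tensorover{\Sigma_1}(-) = Z \otimes (-)$, so $Z \tensorover{\Sigma_1} i^{\boxprod 1}$ is a cofibration by the standing hypothesis and a weak equivalence by the assumption that $Z \tensorover{\Sigma_1} i^{\otimes 1}$ is a weak equivalence; throughout I read this latter assumption as holding for all cofibrant coefficient objects and all arities, which is how it arises in the applications (via Lemma \ref{lemma:sym-for-sset} or Maschke's theorem).

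For the inductive step I would fix $n \geq 2$ and apply the pushout-preserving functor $Z \tensorover{\Sigma_n}(-)$ to the filtration
\[
X^{\otimes n} = Q_0 \to Q_1 \to \cdots \to Q_{n-1} \xrightarrow{\ i^{\boxprod n}\ } Q_n = Y^{\otimes n}
\]
of Definition \ref{def:q-construction}, whose total composite is $i^{\otimes n}$. Writing $g_k \colon Z \tensorover{\Sigma_n} Q_{k-1} \to Z \tensorover{\Sigma_n} Q_k$ for the image of the $k$-th structure map, the full composite $g_n \circ \cdots \circ g_1$ equals $Z \tensorover{\Sigma_n} i^{\otimes n}$, a weak equivalence by hypothesis, while $g_n = Z \tensorover{\Sigma_n} i^{\boxprod n}$ is already a cofibration. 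By $2$-out-of-$3$ it therefore suffices to show that the partial composite $g_{n-1}\circ\cdots\circ g_1$ is a weak equivalence, for which it is enough that each $g_k$ with $1 \leq k \leq n-1$ be a trivial cofibration.

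To see this, recall from Definition \ref{def:q-construction} that $Q_{k-1}\to Q_k$ is the pushout of $\Sigma_n \dotover{\Sigma_{n-k}\times\Sigma_k}\bigl[X^{\otimes(n-k)} \otimes i^{\boxprod k}\bigr]$. Since $Z \tensorover{\Sigma_n}(-)$ preserves pushouts, and the induction--restriction identity gives $Z \tensorover{\Sigma_n}\bigl(\Sigma_n \dotover{\Sigma_{n-k}\times\Sigma_k} W\bigr) \cong \bigl(\mathrm{Res}^{\Sigma_n}_{\Sigma_{n-k}\times\Sigma_k} Z\bigr) \tensorover{\Sigma_{n-k}\times\Sigma_k} W$, the map $g_k$ is the pushout of a map isomorphic to $\tilde Z_k \tensorover{\Sigma_k} i^{\boxprod k}$, where $\tilde Z_k = \bigl(\mathrm{Res}^{\Sigma_n}_{\Sigma_{n-k}\times\Sigma_k}Z\bigr) \tensorover{\Sigma_{n-k}} X^{\otimes(n-k)} \in \M^{\Sigma_k}$. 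As $X$ and $Z$ are cofibrant and the coinvariants $(-)_{\Sigma_{n-k}}$ form a left Quillen functor, $\tilde Z_k$ is cofibrant in $\M$; hence the inductive hypothesis at level $k<n$ (whose weak-equivalence premise $\tilde Z_k \tensorover{\Sigma_k} i^{\otimes k}$ is supplied by the standing assumption) yields that $\tilde Z_k \tensorover{\Sigma_k} i^{\boxprod k}$ is a trivial cofibration. Because trivial cofibrations are stable under pushout, each $g_k$ with $k<n$ is a trivial cofibration, so their composite is a weak equivalence and the induction closes.

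The main obstacle I anticipate is not the homotopical bookkeeping but the verification that the intermediate coefficient objects $\tilde Z_k$ are genuinely cofibrant in $\M$ with their residual $\Sigma_k$-action, so that both the standing cofibration hypothesis and the inductive hypothesis actually apply to them. This is precisely the point at which the behaviour of $X^{\otimes(n-k)}$ under $\Sigma_{n-k}$-coinvariants must be controlled, and it is here that cofibrancy of $X$ together with the left Quillen property of $(-)_{\Sigma_{n-k}}$ does the real work. A secondary point of care is making the identification of $g_k$ as a pushout of $\tilde Z_k \tensorover{\Sigma_k} i^{\boxprod k}$ fully precise, including the residual $\Sigma_k$-equivariance, since it is this equivariance that licenses the appeal to the inductive statement at level $k$.
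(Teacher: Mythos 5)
Your proof follows exactly the paper's argument: induct on $n$, apply the colimit-preserving functor $Z\tensorover{\Sigma_n}(-)$ to the $Q$-filtration of Definition \ref{def:q-construction}, identify the left vertical map of each intermediate pushout as $W\tensorover{\Sigma_q} i^{\boxprod q}$ for a suitable $\Sigma_q$-object $W$, invoke the inductive hypothesis to see that all but the last map in the filtration are trivial cofibrations, and finish with $2$-out-of-$3$ against the assumed weak equivalence $Z\tensorover{\Sigma_n} i^{\otimes n}$. Your reading of the hypotheses (quantifying over all cofibrant coefficient objects and all arities) is also the one the paper implicitly uses, since its inductive hypothesis is stated ``for all $Z$ and all $k<n$.''

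The one step where your justification does not work as stated is the cofibrancy of $\tilde Z_k = Z\tensorover{\Sigma_{n-k}} X^{\otimes(n-k)}$. The coinvariants functor $(-)_{\Sigma_{n-k}}$ is left Quillen only for the \emph{projective} model structure on $\M^{\Sigma_{n-k}}$, and neither $Z$ (assumed cofibrant only in the underlying category $\M$) nor $X^{\otimes(n-k)}$ with its permutation action is projectively cofibrant in general; this is exactly the failure mode that makes, say, symmetric powers badly behaved in positive characteristic. The correct, and simpler, justification — the one the paper uses — is to apply the lemma's own standing hypothesis to the cofibration $\emptyset\to X$ between cofibrant objects: since $(\emptyset\to X)^{\boxprod(n-k)}$ is $\emptyset\to X^{\otimes(n-k)}$, the hypothesis gives directly that $Z\tensorover{\Sigma_{n-k}} X^{\otimes(n-k)}$ is cofibrant in $\M$. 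With that substitution your argument is complete and coincides with the paper's.
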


\begin{proof}
Let $i:X\to Y$ be a trivial cofibration between cofibrant objects, Z a cofibrant object, and assume $Z\otimes_{\Sigma_n} i^{\otimes n}$ is a weak equivalence.
Recall from (\ref{inductive-q-one-colored}) the construction of the $\Sigma_n$-equivariant maps $Q_{q-1}^n \to Q_q^n$ built from $i$ via pushouts:
\begin{align}
\label{eq:pushout_defining_equivariant_punctured_cube}
\nicexy{
  \Sigma_n \dotover{\Sigma_{n-q}\times\Sigma_{q}} X^{\otimes (n-q)} 
  \otimes Q_{q-1}^q \ar[d] \ar[r]  & Q_{q-1}^n\ar[d]  \\
  \Sigma_n\dotover{\Sigma_{n-q}\times\Sigma_{q}}X^{\otimes(n-q)}
  \otimes Y^{\otimes q}\ar[r] & Q_q^n
}
\end{align}

Observe that the pushout diagram above remains a pushout diagram if we apply $Z\otimes_{\Sigma_n}-$ to all objects and morphisms in the diagram, because $Z\otimes_{\Sigma_n}-$ is a left adjoint and so commutes with colimits. We obtain the diagram
\begin{align}
\label{eq:pushout_defining_equivariant_after_coinvariants}
\nicexy{
  Z \tensorover{\Sigma_{n-q}\times \Sigma_q} X^{\otimes (n-q)} \otimes Q_{q-1}^q \ar[d] \ar[r] & Z\tensorover{\Sigma_n} Q_{q-1}^n \ar[d]  \\
  Z\tensorover{\Sigma_{n-q}\times \Sigma_q} X^{\otimes (n-q)} \otimes Y^{\otimes q}\ar[r] & Z\tensorover{\Sigma_n}Q_q^n
}
\end{align}

Recall from (\ref{inductive-q-one-colored}) that $Q_0^n = X^{\otimes n}$ and $Q_n^n = Y^{\otimes n}$ so that the composite as $q$ varies of all the right vertical maps above is $Z\tensorover{\Sigma_n} i^{\otimes n}$, which we have assumed is a trivial cofibration. Observe that the left vertical map $Z \otimes_{\Sigma_{n-q}\times \Sigma_q} X^{\otimes (n-q)} \otimes i^{\boxprod q}$ is isomorphic to the map $W\otimes_{\Sigma_q} i^{\boxprod q}$ where $W = Z \otimes_{\Sigma_{n-q}}X^{\otimes(n-q)}$ has $\Sigma_q$ acting entirely on $Z$. Furthermore, $W$ is cofibrant in $\M$ by the hypothesis on cofibrations applied to the map $\emptyset \to X$.

We prove by induction that all maps of the form $Z\otimes_{\Sigma_n} i^{\boxprod n}$ are trivial cofibrations. For $n=1$ the map in question is $Z\otimes i$, which is a trivial cofibration because $Z$ is cofibrant so $Z\otimes -$ is a left Quillen functor. Now assume $Z\otimes_{\Sigma_k} i^{\boxprod k}$ is a trivial cofibration for all $Z$ and all $k<n$. Write $Z\otimes_{\Sigma_n} i^{\otimes n}$ as the composite:
\begin{align}
\label{eq:composite_for_Z_Qn}
Z\otimes_{\Sigma_n} Q_0^n \to \dots \to Z\tensorover{\Sigma_n} Q^n_q \to \dots \to Z\tensorover{\Sigma_n} Q^n_n
\end{align}
The inductive hypothesis and the preceding paragraph imply that the left vertical map in (\ref{eq:pushout_defining_equivariant_after_coinvariants}) is a trivial cofibration, since $W$ is cofibrant and $q<n$. Thus, the right vertical map in (\ref{eq:pushout_defining_equivariant_after_coinvariants}) is a trivial cofibration, since it is a pushout, and this implies that all maps in (\ref{eq:composite_for_Z_Qn}) are trivial cofibrations except possibly the last map $Z\otimes_{\Sigma_n} i^{\boxprod n}$. By hypothesis, this map is a cofibration. Since we have assumed the composite $Z\otimes_{\Sigma_n} i^{\otimes n}$ is a weak equivalence, the two out of three property implies the last map is a weak equivalence, hence a trivial cofibration as required. This completes the induction.
\end{proof}

\begin{lemma} \label{lemma:sym-for-sset}
Let $\mathcal{C}$ be any set of maps in $\sset$. For any $\mathcal{C}$-local equivalence $i:A\to B$, any $n$, and any simplicial set $Z$ with a $\Sigma_n$-action, $Z\times_{\Sigma_n}i^{\times n}$ is a $\mathcal{C}$-local equivalence.
\end{lemma}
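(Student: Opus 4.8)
The plan is to show that the endofunctor $F = Z\times_{\Sigma_n}(-)^{\times n}$ of $\sset$ carries $\C$-local equivalences to $\C$-local equivalences, and the first move is to reduce to a \emph{trivial cofibration}. Since the cofibrations of $L_\C(\sset)$ agree with those of $\sset$, every object of $L_\C(\sset)$ is cofibrant, and the $\C$-local equivalences are exactly the weak equivalences of the model category $L_\C(\sset)$. Hence Ken Brown's lemma \cite{hovey} (1.1.12) applies with target category $\sset$ equipped with the $\C$-local equivalences: it suffices to prove that $F$ sends trivial cofibrations of $L_\C(\sset)$, i.e. monomorphisms $i\colon A\to B$ that are $\C$-local equivalences, to $\C$-local equivalences. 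So I would assume $i$ is such a map.

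Next I would induct on $n$, using the filtration \eqref{inductive-q-one-colored} of $i^{\times n}$ by the maps $Q^n_{q-1}\to Q^n_q$. Applying the left adjoint $Z\times_{\Sigma_n}(-)$ exhibits $Z\times_{\Sigma_n}i^{\times n}$ as a finite composite in which the $q$-th map is the pushout, along a monomorphism, of $W\times_{\Sigma_q}i^{\boxprod q}$ with $W=Z\times_{\Sigma_{n-q}}A^{\times(n-q)}$ carrying the residual $\Sigma_q$-action on $Z$, exactly as in \eqref{eq:pushout_defining_equivariant_after_coinvariants}. Because $\sset$ is left proper and the pushouts are along cofibrations, it is enough to check that each $W\times_{\Sigma_q}i^{\boxprod q}$ is a $\C$-local equivalence. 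For $q<n$ this is supplied by the inductive hypothesis together with Lemma \ref{lemma:sym-we-iff-boxprod}, applied inside $L_\C(\sset)$; its hypotheses hold because $Z\times_{\Sigma_n}i^{\boxprod n}$ is always a monomorphism and $L_\C(\sset)$ is monoidal by \cite{hirschhorn} (4.1.1).

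The remaining and genuinely hard input is the top stratum $q=n$, equivalently the bare assertion that $Z\times_{\Sigma_n}i^{\times n}$ is a $\C$-local equivalence; here the strict orbit functor $(-)/\Sigma_n$ is a priori not homotopical, since the permutation action on $B^{\times n}$ fails to be free along the fat diagonal. I would resolve this by stratifying $B^{\times n}$ according to coincidence patterns indexed by partitions of $\{1,\dots,n\}$. On the open stratum of pairwise-distinct coordinates the $\Sigma_n$-action is free, so there $Z\times_{\Sigma_n}(-)$ agrees with the homotopy orbits $(Z\times(-))_{h\Sigma_n}$, and homotopy orbits preserve $\C$-local equivalences because $i^{\times n}$ is an equivariant $\C$-local equivalence: it is a composite of maps $\id\times\cdots\times i\times\cdots\times\id$, each a $\C$-local equivalence since $X\times(-)$ is a left Quillen endofunctor of $L_\C(\sset)$ for every $X$ and all objects are cofibrant. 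Every lower stratum, indexed by a partition into $k<n$ blocks, contributes a copy of a lower symmetric power $Z'\times_{\Sigma_k}(-)^{\times k}$, controlled by the inductive hypothesis; Lemma \ref{lemma:sym-we-iff-boxprod} then upgrades this cartesian-power statement to the pushout-product statement used above, closing the induction. The main obstacle throughout is precisely this failure of strict orbits to be homotopy invariant, and the whole argument is organized to confine that failure to the free top stratum and to strictly smaller symmetric powers that the induction already handles.
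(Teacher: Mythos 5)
Your core idea --- separate the free locus of $B^{\times n}$, where strict orbits agree with homotopy orbits, from the diagonal strata, which are lower symmetric powers handled by induction --- is the right instinct, and it is morally the same isotropy-separation principle the paper uses. But as written there are two problems. First, your second paragraph is circular as a reduction: filtering $Z\times_{\Sigma_n}i^{\times n}$ by the stages $Z\times_{\Sigma_n}Q^n_{q-1}\to Z\times_{\Sigma_n}Q^n_q$ leaves the top stage $Z\times_{\Sigma_n}i^{\boxprod n}$, which (given the lower stages) is a $\C$-local equivalence if and only if the composite $Z\times_{\Sigma_n}i^{\times n}$ is --- i.e., you have reduced the claim to itself. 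You acknowledge this ("equivalently the bare assertion that...") but then the paragraph does no work and merely re-proves Lemma \ref{lemma:sym-we-iff-boxprod}. Second, and more seriously, the stratification in your third paragraph is not actually constructed. The "open stratum" of pairwise-distinct coordinates is not a simplicial subset of $B^{\times n}$ (complements of subcomplexes are not subcomplexes), so the assertion that "on the open stratum $Z\times_{\Sigma_n}(-)$ agrees with homotopy orbits" has no direct meaning; and the closed strata $\Delta_P(B)$ for partitions $P$ overlap along coarser partitions, so one needs a homotopy colimit over the partition lattice together with a relative-free-cell argument for the pair $(B^{\times n},\,\bigcup_{P}\Delta_P(B))$ to assemble the pieces. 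That assembly is the entire content of the lemma, and it is left unaddressed.

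The paper's proof does exactly this assembly in one stroke by citing Farjoun \cite{farjoun} (4.A): the diagram $H\mapsto Z^H\times(D^{\times n})^H$ over the opposite orbit category of $\Sigma_n$ is a free diagram of cofibrant objects, so its colimit is a homotopy colimit; that colimit is identified with $Z\times_{\Sigma_n}D^{\times n}$ by finality; and each fixed-point term $(D^{\times n})^H$ is just a smaller power $D^{\times k}$, so each map in the diagram is a finite product $Z^H\times i^{\times k}$, which is a $\C$-local equivalence because $\C$-local equivalences in $\sset$ are closed under finite products (\cite{white-localization}, 5.2). Note that this requires no induction on $n$, no reduction to trivial cofibrations, and no pushout-product filtration inside this lemma. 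If you want to salvage your version, replace the informal stratification with the orbit-category homotopy colimit (or with an explicit induction over the partition lattice plus a relative $\Sigma_n$-free cell structure on $(B^{\times n},\Delta_{fat}(B))$); without one of these, the step from "free stratum plus lower strata" to a statement about $Z\times_{\Sigma_n}B^{\times n}$ itself is a genuine gap.
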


\begin{proof}
As in Theorem 5.2 of \cite{white-commutative} we follow 4.A of \cite{farjoun} and consider the colimit $\colim_{H \in Orb^{op}} Z^H \times (i^{\times n})^H$ where $Orb$ is the orbit category of $\Sigma_n$ whose objects are subgroups $H < \Sigma_n$, and $(-)^H$ is the $H$-fixed points functor. This colimit is a homotopy colimit because it is indexed by a free $Orb^{op}$-diagram (and all objects are cofibrant). This colimit is isomorphic to $Z\times_{\Sigma_n}i^{\times n} = \colim_{\Sigma_n} Z\times i^{\times n}$ because the diagram for the latter is final in the diagram for the former: if two points are glued together by some $H < \Sigma_n$ then they must be glued together by the full $\Sigma_n$ action as well.

Lastly, for any simplicial set $D$ and any $H<\Sigma_n$, $(D^{\times n})^H$ is isomorphic to $D^{\times k}$ for some $k$, since $H$ acts on the factors of $D$ and hence any factor is either left fixed or identified entirely with another factor. Hence, $Z\times_{\Sigma_n} i^{\times n}$ is a homotopy colimit of maps of the form $Z^H\times i^k$. Since $\mathcal{C}$-local equivalences in sSet are closed under finite product (by 5.2 in \cite{white-localization}), each such map is a $\mathcal{C}$-local equivalence, hence their homotopy colimit is a $\mathcal{C}$-local equivalence as required. 
\end{proof}

\subsection{Symmetric Spectra}

A symmetric spectrum $X = (X_n)$ is a sequence of simplicial sets where for every $n$, $X_n$ comes equipped with an action of $\Sigma_n$, and all structure maps and morphisms respect these group actions. \cite{hovey-shipley-smith} introduced this notion and endowed the category $\Sp$ of symmetric spectra with the projective stable model structure, obtained as a left Bousfield localization of the levelwise model structure where weak equivalences (resp. (co)fibrations) are maps $f=(f_n)$ such that each $f_n$ is a weak equivalence (resp. (co)fibration) in $\sset$. There is also an injective stable model structure $\Sp_{inj}$ where the cofibrations are the monomorphisms and the weak equivalences are the stable equivalences (\cite{hovey-shipley-smith} 5.3).

An obstruction due to Gaunce Lewis demonstrates that commutative ring spectra cannot inherit a model structure from the projective stable model structure. One way to side-step this obstruction is to tweak the model structure so that the sphere spectrum is no longer cofibrant. This is accomplished by the positive stable model structure of \cite{MMSS}, which has the same weak equivalences but now cofibrations are required to have $f_0$ an isomorphism of simplicial sets. The positive flat model structure of \cite{shipley-positive} is even nicer because cofibrant commutative ring spectra forget to cofibrant spectra. This model structure is obtained by enlarging the cofibrations of symmetric sequences to equal the monomorphisms, then passing to $S$-modules where $S$ is the sphere spectrum, then left Bousfield localizing to get to the flat stable model structure, and finally requiring cofibrations to be isomorphisms in level zero.

The main result of this section is:

\begin{theorem}
\label{thm:agt1_1.1}
Endow $\Sp$ with the positive flat stable model structure or the positive stable model structure. Then $\Sp$ satisfies an extension of $(\spadesuit)$ as in Remark \ref{remark-more-gen-spadesuit}. As a consequence, for every non-empty set of colors $\fC$ and for every $\fC$-colored operad $\sO$, the category $\Alg_{\sO}$ inherits a model structure from $\Sp$.
\end{theorem}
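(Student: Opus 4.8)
The plan is to verify the extension of $(\spadesuit)$ promised in Remark \ref{remark-more-gen-spadesuit} and then read off the model structure on $\Alg_\sO$ directly from Theorem \ref{colored-model}. Recall that the only place $(\spadesuit)$ entered the proof of Theorem \ref{colored-model} was to guarantee that the map $\id \tensorover{\Sigma_t} i^{\boxprod t}$ appearing in the pushout \eqref{inductive-q-one-colored} is a trivial cofibration; by Remark \ref{remark-more-gen-spadesuit} it is enough to know instead that this map lands in a class $\mathcal{H}$ of weak equivalences closed under pushout and transfinite composition, for then the filtration \eqref{aoycolim} forces each $j_t$, and hence $j$, into $\mathcal{H}$. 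I would take $\mathcal{H}$ to be the class of \emph{stable trivial $h$-cofibrations} of \cite{batanin-berger}, i.e. maps that are simultaneously stable equivalences and $h$-cofibrations; these are contained in the weak equivalences and enjoy the required closure properties. The whole theorem then reduces to the following claim: for every $n \ge 1$, every $Z \in \Sp^{\sigmaop_n}$, and every generating trivial cofibration $i \colon X \to Y$ of the positive flat (resp.\ positive) stable structure, the map $Z \tensorover{\Sigma_n} i^{\boxprod n}$ is a stable equivalence and an $h$-cofibration.

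The $h$-cofibration half I would dispatch levelwise. In the flat structure a generating cofibration is a monomorphism, so $i^{\boxprod n}$ is a $\Sigma_n$-equivariant monomorphism, and both smashing with $Z$ and passing to $\Sigma_n$-orbits preserve monomorphisms one simplicial level at a time, exactly as in the $\sset$ arguments; in the positive non-flat structure the generating trivial cofibrations are still levelwise closed inclusions, so the same conclusion holds. For the stable-equivalence half I would reuse the bookkeeping of Lemmas \ref{lemma:sym-we-iff-boxprod} and \ref{lemma:sym-for-sset}, but with ``cofibration'' systematically replaced by ``$h$-cofibration'' so as to cover arbitrary (non-cofibrant) $Z$. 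Concretely, filtering $i^{\boxprod n}$ by the $Q$-construction \eqref{inductive-q-one-colored} and applying the left adjoint $Z \tensorover{\Sigma_n}(-)$ turns the defining squares into the pushouts \eqref{eq:pushout_defining_equivariant_after_coinvariants}, whose left vertical maps have the form $W \tensorover{\Sigma_q} i^{\boxprod q}$ with $W = Z \tensorover{\Sigma_{n-q}} X^{\otimes(n-q)}$ carrying the residual $\Sigma_q$-action. An induction on $n$ then reduces the $\boxprod$-statement to the $\otimes$-statement, leaving the single assertion that $Z \tensorover{\Sigma_n} i^{\otimes n}$ is a stable equivalence for every stable trivial cofibration $i$ and every $Z$.

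This last assertion is the main obstacle, and it is exactly where the special features of the positive flat structure of \cite{shipley-positive} are indispensable. The difficulty is that $\Sigma_n$-orbits do not in general preserve stable equivalences, even of $\Sigma_n$-equivariant maps: one needs the action to be suitably free and the objects to be flat. My plan is to exploit the two defining virtues of the positive flat structure simultaneously---that \emph{every} symmetric spectrum is flat, so that $Z \otimes (-)$ preserves stable equivalences, and that the positivity condition frees the $\Sigma_n$-actions on the relevant sphere levels, so that the strict orbits $Z \tensorover{\Sigma_n}(-)$ compute the homotopy orbits on the smash power $i^{\otimes n}$. The technical engine here is the flatness and semistability machinery of \cite{hovey-shipley-smith}, combined with a fixed-point decomposition of the $\Sigma_n$-action on $i^{\otimes n}$ in the spirit of Lemma \ref{lemma:sym-for-sset} but carried out at the level of symmetric spectra rather than in $\sset$. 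I expect the formal filtration and $\boxprod$-to-$\otimes$ reductions to be routine adaptations of the already-proven $\sset$ lemmas; the genuine work, and the step most likely to require care, is verifying this flatness-plus-freeness input that makes the orbit functor homotopically well behaved for an arbitrary equivariant object $Z$.
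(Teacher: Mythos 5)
Your overall skeleton is the same as the paper's: invoke Remark \ref{remark-more-gen-spadesuit} with the class of maps that are simultaneously monomorphisms (equivalently, injective cofibrations) and stable equivalences, filter $\id\tensorover{\Sigma_t}i^{\boxprod t}$ via the $Q$-construction, and reduce the $\boxprod$-statement to an $\otimes$-statement. But both halves of your verification rest on claims that fail in $\Sp$. For the monomorphism half, you cannot argue ``one simplicial level at a time, exactly as in the $\sset$ arguments'': the smash product of symmetric spectra is a coequalizer involving induction along $\Sigma_p\times\Sigma_q\subset\Sigma_{p+q}$ and the sphere-spectrum action, and smashing with an arbitrary $Z$ does \emph{not} preserve monomorphisms --- that is precisely what flatness of the other factor is for. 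The paper does not prove this step from scratch; it imports it as Proposition 4.28$^*$(b) of Harper's corrigendum to \cite{agt1} (Lemma \ref{agt1_4.28*b_4.29*a}(1) here), whose proof genuinely uses that $i$ is a positive flat cofibration between cofibrant objects. For the stable-equivalence half, your key premise that ``every symmetric spectrum is flat, so that $Z\otimes(-)$ preserves stable equivalences'' is false: in \cite{hovey-shipley-smith} smashing with a non-flat object need not preserve stable equivalences, and this is one of the central subtleties of the category.

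The paper sidesteps the need for $Z\wedge(-)$ to preserve stable equivalences entirely. Once each $\id\tensorover{\Sigma_t}i^{\boxprod t}$, and hence each $(j_t)_d$, is known to be a monomorphism, one identifies the filtration quotient
\[
(A_t)_d/(A_{t-1})_d \;\cong\; \sO_A\smallbinom{d}{[tc]}\tensorover{\Sigma_t}\bigl(Y/X\bigr)^{\otimes t},
\]
and applies the statement that $B\tensorover{\Sigma_t}(-)^{\otimes t}$ preserves weak equivalences between \emph{cofibrant} objects (Harper's 4.29$^*$(a), Lemma \ref{agt1_4.28*b_4.29*a}(2)) to the acyclic cofibration $\ast\to Y/X$; positivity and flatness of $Y/X$ are what make the strict $\Sigma_t$-orbits homotopically meaningful, with no hypothesis on $B$ at all. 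So the cofiber argument, not a flatness property of the arbitrary equivariant coefficient $Z$, is the missing idea. You do correctly flag that the ``flatness-plus-freeness input'' is the hard step, but as written your route to it does not go through, and the two external inputs from \cite{agt1} (or equivalent arguments) would have to be supplied.
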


The proof requires two lemmas. The first comes from \cite{agt1}, and would be false in non-positive model structures.

\begin{lemma}
\label{agt1_4.28*b_4.29*a}
Consider $\Sp$  with the positive flat stable model structure.  Suppose:
\begin{itemize}
\item
$t \geq 1$ and $B \in \Sp^{\sigmatop}$.
\item
$i : X \to Y \in \Sp$ is a cofibration between cofibrant objects.
\end{itemize}
Then:
\begin{enumerate}
\item
The map
\[
\nicexy@C+10pt{B \tensorover{\sigmat} Q^t_{t-1} \ar[r]^-{\Id \tensorover{\sigmat} i^{\boxprod t}}
& B \tensorover{\sigmat} Y^{\otimes t}}
\]
in $\Sp$ is a monomorphism.
\item
The functor
\[
\nicexy@C+.6cm{\Sp \ar[r]^-{B \tensorover{\sigmat} (-)^{\otimes t}} & \Sp}
\]
preserves weak equivalences between cofibrant objects.
\end{enumerate}
\end{lemma}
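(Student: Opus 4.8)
The plan is to isolate two structural inputs special to the positive flat stable model structure and feed them into otherwise formal arguments: first, that positivity forces the $\Sigma_t$-action on a $t$-fold smash power of a cofibrant object to be \emph{free} levelwise, and second, that cofibrant objects are flat in the sense of Definition \ref{defn:cof-obj-flat}. Throughout I would use that a cofibration $i$ between cofibrant objects in the positive flat structure is in particular a monomorphism with $X_0 = Y_0 = *$.

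For part (1) I would first note that in the flat structure the cofibrations are exactly the monomorphisms, so the iterated pushout product $i^{\boxprod t}\colon Q^t_{t-1}\to Y^{\otimes t}$ of Definition \ref{def:q-construction}, being a $t$-fold pushout product of monomorphisms, is a $\Sigma_t$-equivariant monomorphism. Because $Y_0=*$, the level-$n$ object $(Y^{\otimes t})_n$ is a wedge indexed by decompositions $n=n_1+\cdots+n_t$ with every $n_j\geq 1$, and on this wedge $\Sigma_t$ acts freely away from the basepoint; the subobject $Q^t_{t-1}$ then carries a free $\Sigma_t$-action by restriction. The conclusion is then the elementary observation that smashing the equivariant monomorphism with $B$ preserves levelwise injectivity, and that the quotient of a monomorphism of \emph{freely}-acted objects by $\Sigma_t$ is again a monomorphism; since monomorphisms in $\Sp$ are detected levelwise, $\Id\tensorover{\Sigma_t} i^{\boxprod t}$ is a monomorphism.

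For part (2), let $f\colon W\to W'$ be a weak equivalence between cofibrant objects, and proceed in three steps. First, writing $f^{\otimes t}$ as a composite of maps each of which smashes $f$ with cofibrant objects, flatness of cofibrant objects shows $f^{\otimes t}\colon W^{\otimes t}\to W'^{\otimes t}$ is a weak equivalence; it is $\Sigma_t$-equivariant and, by positivity exactly as in part (1), both source and target carry free $\Sigma_t$-actions. Second, I would record the following consequence of flatness: if $g$ is a weak equivalence between cofibrant objects then $g\otimes B$ is a weak equivalence for \emph{arbitrary} $B$, since $C\otimes B\simeq C\otimes QB$ for $C$ cofibrant (flatness applied to the weak equivalence $QB\to B$) while $g\otimes QB$ is a weak equivalence (flatness of the cofibrant object $QB$). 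Applying this with $g=f^{\otimes t}$ shows $B\otimes f^{\otimes t}$ is a $\Sigma_t$-equivariant weak equivalence between objects with free $\Sigma_t$-action. Third, since the action is free the strict orbits compute the homotopy orbits, so passing to $\Sigma_t$-orbits sends this equivariant weak equivalence to the weak equivalence $B\tensorover{\Sigma_t} f^{\otimes t}$, which is the desired conclusion.

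The step I expect to be the main obstacle is the interaction of positivity and flatness underlying part (2): verifying rigorously that the $\Sigma_t$-action on $W^{\otimes t}$ is free (this is precisely what fails in the non-positive stable structures, and is the homotopical shadow of the Lewis obstruction) and that strict and homotopy $\Sigma_t$-orbits agree on such free objects in $\Sp$. These are not formal; they require the explicit levelwise description of smash powers together with the special properties of the positive flat structure, and are exactly the technical content imported from \cite{agt1}. I would organize the write-up so that all model-categorical formalities (Ken Brown's lemma, the left-Quillen orbit functor, and closure of monomorphisms under the relevant operations) are cleanly separated from these two genuinely spectrum-level inputs.
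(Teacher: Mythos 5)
Your proposal takes a genuinely different route from the paper: the paper's entire ``proof'' of this lemma is a citation to Propositions 4.28$^*$(b) and 4.29$^*$(a) of the corrigendum to \cite{agt1}, specialized to symmetric spectra regarded as symmetric sequences concentrated in degree $0$. You instead attempt to reconstruct the content of those propositions directly. The overall architecture you propose --- positivity forces levelwise-free $\Sigma_t$-actions on smash powers, flatness of cofibrant objects handles arbitrary $B$, and free strict orbits agree with homotopy orbits --- is the right skeleton, and your part (2) is essentially sound modulo the two inputs you explicitly flag (freeness of the action on $W^{\otimes t}$, and the comparison of strict and homotopy orbits for \emph{stable}, not merely levelwise, equivalences).

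Part (1), however, contains a genuine gap. First, your claim that in the flat structure the cofibrations are exactly the monomorphisms describes the injective stable structure $\Sp_{inj}$; the flat cofibrations form a proper subclass of the monomorphisms. More seriously, you call it an ``elementary observation'' that smashing the equivariant monomorphism $i^{\boxprod t}$ with an arbitrary $B$ preserves levelwise injectivity. This is false in $\Sp$: the smash product over the sphere spectrum is a coequalizer and is not left exact, so $B \otimes -$ need not preserve monomorphisms for general $B$ --- this failure is precisely why flat objects and flat cofibrations are singled out in the first place, and it is essential here because the lemma imposes no cofibrancy on $B$. What saves the statement is that $i^{\boxprod t}$ is not merely a monomorphism but an iterated pushout product of positive flat cofibrations, hence itself a flat cofibration, and smashing an arbitrary object with a flat cofibration does yield a monomorphism. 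But that last assertion is exactly the nontrivial content of Harper's Proposition 4.28$^*$(b) that you are trying to reprove; it cannot be treated as formal and belongs on your list of genuinely spectrum-level inputs alongside the freeness and orbit statements. With that step either proved or imported, and with the orbit/homotopy-orbit comparison carried out for stable equivalences as you indicate, the argument closes.
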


\begin{proof}
The first assertion is the special case of \cite{agt1} (corrigendum) Proposition 4.28$^*$(b) applied to symmetric spectra, regarded as symmetric sequences concentrated at $0$.  Likewise, the second assertion is the special case of \cite{agt1} (corrigendum) Proposition 4.29$^*$(a) applied to symmetric sequences concentrated at $0$.
\end{proof}

\begin{lemma}
\label{agt1_4.4}
Suppose $\sO$ is a $\fC$-colored operad in $\Sp$, $A \in \algo$, $i : X \to Y \in \sptothec$  is a generating acyclic cofibration with the positive flat stable model structure, and
\begin{equation}
\label{pushout-freemap-sp}
\nicexy{
\sO \circ X \ar[d]_-{\id \circ i} \ar[r]^-{f} & A \ar[d]^-{j}\\
\sO \circ Y \ar[r] & A \coprod\limits_{\sO \circ X} (\sO \circ Y)
}
\end{equation}
is a pushout in $\algo$.  Then the map $j$ is an entrywise monomorphism and weak equivalence.
\end{lemma}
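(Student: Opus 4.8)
The plan is to reduce the statement, via the filtration of Proposition \ref{colored_jpaa7.12}, to an equivariant pushout--product map which I then analyze by induction using Lemma \ref{agt1_4.28*b_4.29*a}. First I would observe that, being a generating acyclic cofibration of $\sptothec = \Sp^{\fC}$, the map $i$ is concentrated at a single color $c \in \fC$ \cite{hirschhorn} (11.1.10), so the filtration \eqref{aoycolim} applies. For each color $d$ the $d$-colored entry of $j$ is then the transfinite composition of the maps $j_t$ defined by the pushout \eqref{one-colored-jt-pushout}, whose left vertical map is $\id \tensorover{\Sigma_t} i^{\boxprod t}$ with $B := \sO_A \singledbrtc \in \Sp^{\sigmatop}$. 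A map of symmetric spectra that is simultaneously a monomorphism and a stable equivalence is exactly a trivial cofibration of the injective stable model structure $\Sp_{inj}$ (\cite{hovey-shipley-smith} 5.3), whose weak equivalences agree with those of the positive flat stable structure; such maps are closed under pushout and transfinite composition. Hence it suffices to prove that each $\id \tensorover{\Sigma_t} i^{\boxprod t}$ is a monomorphism and a stable equivalence, since then each $j_t$ is, and so is the transfinite composition that computes $j$ entrywise.

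The heart of the argument is the following more general claim, which I would prove by induction on $t \geq 1$: for \emph{every} $B \in \Sp^{\sigmatop}$, the map $B \tensorover{\Sigma_t} i^{\boxprod t}$ is a monomorphism and a stable equivalence. The monomorphism assertion is immediate from Lemma \ref{agt1_4.28*b_4.29*a}(1), so the content is the stable equivalence. The strategy is to compare $B \tensorover{\Sigma_t} i^{\boxprod t}$, which is the last map $B \tensorover{\Sigma_t} Q^t_{t-1} \to B \tensorover{\Sigma_t} Q^t_t$ of the $Q$-filtration \eqref{inductive-q-one-colored}, against the full composite $B \tensorover{\Sigma_t} i^{\otimes t} : B \tensorover{\Sigma_t} Q^t_0 \to B \tensorover{\Sigma_t} Q^t_t$. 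Since the generating acyclic cofibration $i$ is a cofibration between cofibrant objects, hence a weak equivalence between cofibrant objects, Lemma \ref{agt1_4.28*b_4.29*a}(2) shows that this full composite is a stable equivalence.

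It then remains to see that the earlier maps $B \tensorover{\Sigma_t} Q^t_{q-1} \to B \tensorover{\Sigma_t} Q^t_q$ for $1 \leq q \leq t-1$ are stable equivalences, for then two-out-of-three forces the last map to be one as well. Applying the left adjoint $B \tensorover{\Sigma_t}(-)$ to the defining pushout \eqref{inductive-q-one-colored} exhibits each such map as a pushout of a map isomorphic to $W \tensorover{\Sigma_q} i^{\boxprod q}$, where $W = B \tensorover{\Sigma_{t-q}} X^{\otimes(t-q)} \in \Sp^{\Sigma_q^{\mathrm{op}}}$ with $\Sigma_q$ acting through $B$, exactly as in the proof of Lemma \ref{lemma:sym-we-iff-boxprod}. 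For $q < t$ the inductive hypothesis, applied to the arbitrary object $W$, gives that $W \tensorover{\Sigma_q} i^{\boxprod q}$ is a monomorphism and a stable equivalence, i.e.\ a trivial cofibration of $\Sp_{inj}$, whose pushout is therefore again a stable equivalence; the base case $t=1$ is $B \otimes i$, a stable equivalence by Lemma \ref{agt1_4.28*b_4.29*a}(2). This closes the induction and, combined with the first paragraph, proves the lemma. The crux of the argument --- and the reason it requires the positive flat stable structure --- is that Lemma \ref{agt1_4.28*b_4.29*a} supplies both the monomorphism property and the preservation of weak equivalences for an \emph{arbitrary} $B \in \Sp^{\sigmatop}$, with no cofibrancy hypothesis; this is precisely what lets the induction close even though $\sO_A \singledbrtc$ is in no way assumed cofibrant, which is exactly the situation in the all-operads setting of Theorem \ref{thm:agt1_1.1}.
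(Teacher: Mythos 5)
Your proposal is correct, and its overall skeleton (reduce via the filtration \eqref{aoycolim} to the maps $j_t$ of \eqref{one-colored-jt-pushout}, show the relevant maps are injective trivial cofibrations, and conclude by closure under pushout and transfinite composition) matches the paper's. But the weak-equivalence half is handled by a genuinely different argument. The paper shows $(j_t)_d$ is a monomorphism and then computes its cofiber: $(A_t)_d/(A_{t-1})_d \cong \sO_A\smallbinom{d}{[tc]} \tensorover{\sigmat} (Y/X)^{\otimes t}$, which is stably contractible by Lemma \ref{agt1_4.28*b_4.29*a}(2) applied to the acyclic cofibration $\ast \to Y/X$; a monomorphism with contractible cofiber is then a stable equivalence. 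You instead prove directly that $B \tensorover{\Sigma_t} i^{\boxprod t}$ is an injective trivial cofibration for \emph{every} $B \in \Sp^{\sigmatop}$, by induction on $t$ over the $Q$-filtration \eqref{inductive-q-one-colored} together with two-out-of-three against the full composite $B \tensorover{\Sigma_t} i^{\otimes t}$ --- essentially the argument of Lemma \ref{lemma:sym-we-iff-boxprod} with the cofibrancy hypothesis on $B$ removed, which is legitimate precisely because Lemma \ref{agt1_4.28*b_4.29*a} holds for arbitrary $B$. Your route applies Lemma \ref{agt1_4.28*b_4.29*a}(2) to $i$ itself rather than to $\ast \to Y/X$, and it avoids the (implicit in the paper) appeal to the fact that a monomorphism of symmetric spectra with stably contractible quotient is a stable equivalence, at the cost of a longer induction; the paper's cofiber computation is shorter but leans on that stability fact and on the identification $Y^{\otimes t}/Q^t_{t-1} \cong (Y/X)^{\otimes t}$. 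Both are valid, and both ultimately rest on the same two parts of Lemma \ref{agt1_4.28*b_4.29*a}.
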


\begin{proof}
In the injective model structure on $\Sp$, the cofibrations are the monomorphisms, so it is sufficient to prove each $j_t : A_{t-t} \to A_t \in \sptothec$ in our filtration (\ref{aoycolim}) of $j$ is an entrywise monomorphism and weak equivalence, since such maps are closed under pushout and transfinite composition.  Note that $i$ must be concentrated in one color, say $c \in \fC$, where it is a generating acyclic cofibration of $\Sp$ by \cite{hirschhorn} (11.1.10).  Therefore, it is enough to show that for each $t \geq 1$ and each color $d \in \fC$, the map $(j_t)_d \in \Sp$ in the pushout \eqref{one-colored-jt-pushout},
\[
\nicexy{
\sO_A\smallbinom{d}{[tc]}
\tensorover{\sigmat} 
Q^{t}_{t-1} 
\ar[d]_-{\Id \tensorover{\sigmat} i^{\boxprod t}} \ar[r]^-{f^{t-1}_*} 
& 
(A_{t-1})_d \ar[d]^-{(j_t)_d}
\\
\sO_A\smallbinom{d}{[tc]} 
\tensorover{\sigmat} Y^{\otimes t} 
\ar[r]^-{\xi_{t}} 
& 
(A_t)_d,
}\]
is a monomorphism and a weak equivalence.

Since $i \in \Sp$ is a generating acyclic cofibration with the positive flat stable model structure, it is a cofibration between cofibrant objects. Lemma \ref{agt1_4.28*b_4.29*a}(1) tells us  $\Id \otimes_{\sigmat} i^{\boxprod t}$ is a monomorphism (equivalently, a cofibration in $\Sp_{inj}$). Since cofibrations are closed under pushout, $(j_t)_d$ is a monomorphism. We now prove it is a weak equivalence.

Note that the vertical maps above are monomorphisms, so there are isomorphisms
\begin{equation}
\label{atd-over-atminusoned}
\frac{(A_t)_d}{(A_{t-1})_d}
\cong \O_A\smallbinom{d}{[tc]} \tensorover{\sigmat} \bigl(Y^{\otimes t}/Q^t_{t-1}\bigr)
\cong  \O_A\smallbinom{d}{[tc]} \tensorover{\sigmat} (Y/X)^{\otimes t}.
\end{equation}
Since $* \to Y/X \in \Sp$ is an acyclic cofibration between cofibrant objects, the above isomorphisms and Lemma \ref{agt1_4.28*b_4.29*a}(2) imply that the map
\[
\nicexy{
\ast \ar[r] 
&  \O_A\smallbinom{d}{[tc]} \tensorover{\sigmat} (Y/X)^{\otimes t}
\ar[r]^-{\cong}
& \frac{(A_t)_d}{(A_{t-1})_d}
}\] 
is a weak equivalence.  So $(j_t)_d$ is a weak equivalence.
\end{proof}

\begin{proof}[Proof of Theorem \ref{thm:agt1_1.1}]
Let $i$ be a generating trivial cofibration in the positive flat stable model structure. Since every generating trivial cofibration of the positive stable model structure is such a map, it is sufficient to consider such $i$. Remark \ref{remark-more-gen-spadesuit} demonstrates that it suffices to show that for every such $i$ and every $X\in \Sp^{\Sigma_n}$, maps of the form $X \otimes_{\Sigma_n} i^{\boxprod n}$ are contained in some class of morphisms contained in the weak equivalences and closed under transfinite composition and pushout. The class of morphisms we will use are the trivial cofibrations in the injective stable model structure on $\Sp$.

Let $\mathcal{R}$ be the closure under transfinite composition and pushout of the class of maps $X \otimes_{\Sigma_n} i^{\boxprod n}$ taken over all positive flat trivial cofibrations $i$, all symmetric spectra $X$ with a $\Sigma_n$ action, and all $n>0$. We must prove this class is contained in the weak equivalences. Let $\mathcal{R}_{inj}$ denote the same saturation, but where $i$ runs through injective trivial cofibrations. Observe that $\mathcal{R} \subset \mathcal{R}_{inj}$ because every positive flat trivial cofibration is an injective trivial cofibration. 

By Lemma \ref{agt1_4.4}, all maps in $\mathcal{R}_{inj}$ are trivial cofibrations in $\Sp_{inj}$, hence are stable equivalences. As these are also the weak equivalences of the positive (flat) stable model structure, we have proven $\Sp$ satisfies an extension of $(\spadesuit)$ as in Remark \ref{remark-more-gen-spadesuit}. In particular, transfinite compositions of maps $j$ built from $i$ as in \eqref{pushout-freemap-sp} are stable equivalences, and this proves the existence of transferred model structures on any category of colored operad algebras in $\Sp$ with either the positive stable or positive flat stable model structure, just as in the proof of Theorem \ref{colored-model}.
\end{proof}



\subsection{Further Applications}

In addition to the applications listed above, we expect this work to apply in numerous other contexts including equivariant stable homotopy theory, motivic homotopy theory, and higher category theory. Preservation results as in Section \ref{sec:preservation} have already been used in \cite{kervaire-arxiv},\cite{hill-hopkins}, and \cite{magda}. The first author hopes to apply the results in this paper--both the existence of these (semi-)model structures and their relationship to left Bousfield localization--to ongoing joint work with Javier Guti\'{e}rrez on the $N_\infty$-operads of \cite{blumberg-hill}. Similarly, a version of Theorem \ref{sigmacof-alg-semi} has been used in motivic contexts \cite{gutierrez-rondigs-spitzweck-ostvaer} and we hope the results in Section \ref{sec:alg-over-colored} can be used to weaken the cofibrancy hypotheses required there. It would also be valuable to apply the results of Section \ref{sec:preservation} in that setting. 

Theorem \ref{sigmacof-alg-semi} has already been used in \cite{batanin-stabilization} as a fundamental step in the proof of the Breen-Baez-Dolan Stabilization Hypothesis for Rezk's model of weak $n$-categories. Batanin and the first author plan to use a similar approach to prove a stronger and more general stabilization result, and also to prove Deligne's Conjecture in more general settings, extending \cite{mcclure-smith}.

In future work the authors hope to dualize Theorem \ref{thm:general-preservation} to a statement about right Bousfield localization and then to apply the (semi-)model structures to obtain results regarding preservation of algebras over colored operads under right Bousfield localization. The results in Sections \ref{sec:alg-over-colored} and \ref{sec:applications} will be crucial to this program.



\end{document}